\definecolor{weblmcolor}{cmyk}{0.86,0.23,0.44,0.15}
\definecolor{dmagenta}{rgb}{.4,.1,.4}
\definecolor{dblue}{rgb}{.0,.0,.5}
\definecolor{mblue}{rgb}{.0,.4,.7}
\definecolor{ddblue}{rgb}{.0,.0,.4}
\definecolor{dred}{rgb}{.9,.0,.0}
\definecolor{dgreen}{rgb}{.0,.5,.0}
\definecolor{Eeom}{rgb}{.0,.0,.5}
\definecolor{dbrown}{rgb}{.5,.2,.0}
\numberwithin{equation}{section}
\numberwithin{figure}{section}
\theoremstyle{plain}
\newtheorem{thm}{\protect\theoremname}[section]
  \theoremstyle{definition}
  \newtheorem{defn}[thm]{\protect\definitionname}
  \theoremstyle{plain}
  \newtheorem{lem}[thm]{\protect\lemmaname}
  \theoremstyle{remark}
  \newtheorem{rem}[thm]{\protect\remarkname}
\DeclareMathOperator*{\esssup}{ess\,sup}
\DeclareMathOperator*{\esssinf}{ess\,inf}
\DeclareMathOperator*{\esssliminf}{ess\,lim\,inf}
\def\Xint#1{\mathchoice
{\XXint\displaystyle\textstyle{#1}}%
{\XXint\textstyle\scriptstyle{#1}}%
{\XXint\scriptstyle\scriptscriptstyle{#1}}%
{\XXint\scriptscriptstyle\scriptscriptstyle{#1}}%
\!\int}
\def\XXint#1#2#3{{\setbox0=\hbox{$#1{#2#3}{\int}$ }
\vcenter{\hbox{$#2#3$ }}\kern-.6\wd0}}
\def\dashint{\Xint-}
\newcommand{\Addresses}{{
\bigskip
\footnotesize

\textsc{Abhrojyoti Sen,  Goethe-Universit\"{a}t Frankfurt, Institut f\"{u}r Mathematik, Robert-Mayer-Str. 10, D-60629 Frankfurt, Germany}\par\nopagebreak
\textit{E-mail address}: \href{mailto:sen@math.uni-frankfurt.de}{sen@math.uni-frankfurt.de}
}}
  \providecommand{\definitionname}{Definition}
  \providecommand{\lemmaname}{Lemma}
  \providecommand{\remarkname}{Remark}
\providecommand{\theoremname}{Theorem}
\begin{document}
\global\long\def\d{\,d}
\global\long\def\tr{\mathrm{tr}}
\global\long\def\supp{\operatorname{spt}}
\global\long\def\div{\operatorname{div}}
\global\long\def\osc{\operatorname{osc}}
\global\long\def\essup{\esssup}
\global\long\def\aint{\dashint}
\global\long\def\essinf{\esssinf}
\global\long\def\essliminf{\esssliminf}

\excludeversion{old}

\excludeversion{note}

\excludeversion{note2}


\title[1D pressureless gas dynamics systems in a strip]{1D pressureless gas dynamics systems in a strip}

\author{Abhrojyoti Sen}
\begin{abstract}
We construct explicit measure-valued solutions to the one-dimensional pressureless gas dynamics system in a strip-like domain by introducing a new boundary potential. The constructed solutions satisfy an entropy condition, and depending on the boundary data and the behavior of the potentials, mass accumulation can occur at the boundaries. The approach relies on a systematic treatment of boundary potentials and their interactions with the initial data, providing a more precise understanding of the formation and propagation of singularities in measure-valued solutions.
\end{abstract}
\keywords{Initial-boundary value problem, pressureless gas dynamics system, entropy solution, generalized variational principle, characteristic curves, strip-like domain}
\subjclass[2020]{Primary: 35D30, 35F61, 35L67, 35Q35, 76N15.}

\maketitle
\tableofcontents
\section{Introduction and main result}
\subsection{State of the art}
In this article, we investigate the solvability of the initial-boundary value problem for the system of pressureless gas dynamics described by
\begin{equation} \begin{aligned} &\partial_t \rho + \partial_x (\rho u) = 0,\\ &\partial_t (\rho u) + \partial_x (\rho u^2) = 0, \end{aligned} \label{e1.1} \end{equation}
in one spatial dimension, where $\rho=\rho(x, t)\geq 0$ denotes the density and $u=u(x,t)$ represents the velocity field. The two equations in system \eqref{e1.1} models the conservation of mass and momentum, respectively. We consider \eqref{e1.1} subject to the initial conditions
\begin{equation}\label{e1.2}
\rho(x,0) = \rho_0(x), \quad u(x,0) = u_0(x), \quad x \in [0,1],  
\end{equation}
where $\rho_0 : [0, 1]\to \mathbb{R}^+$ and $u_0:[0,1]\to \mathbb{R}$ are given functions. The focus of our analysis is to determine the admissibility of boundary data
\begin{align}
&u(0,t) = u_{b_l}(t),\,\,\, \rho(0,t) u(0,t) = \rho_{b_l}(t) u_{b_l}(t),\label{e1.3}\\
&u(1,t) = u_{b_r}(t),\,\,\, \rho(1,t) u(1,t) = \rho_{b_r}(t) u_{b_r}(t),  \quad t > 0,\label{e1.4}    
\end{align} 
where $u_{b_l}, u_{b_r}: [0, \infty)\to \mathbb{R}$ and $\rho_{b_l}, \rho_{b_r}: [0, \infty)\to \mathbb{R}^+$ are prescribed functions. Our analysis aims to identify the conditions under which the boundary data in \eqref{e1.3} can be consistently prescribed and to rigorously determine the sense in which solutions to \eqref{e1.1} exist. In particular, we examine whether the system admits weak or measure-valued solutions that satisfy the entropy condition and respect the prescribed initial and boundary data. For scalar conservation laws, this type of problem in a strip $[0,1] \times (0, \infty)$ has been studied by Frankowska \cite{F2010}, and Joseph and Gowda \cite{JG92}. For applications in highway traffic modeling, see the work of Strub and Bayen \cite{SB2006}. Moreover, \cite{L2018} and the references therein provide a broader overview of the initial boundary value problems for related hyperbolic systems.

We consider the spatial domain $[0, 1]$ for convenience; however, any other finite interval can be addressed by applying an appropriate affine rescaling transformation. Specifically, for an interval $[a, b]$ the rescaling can be achieved via the mapping $x \mapsto \frac{x-a}{b-a}.$ The initial datum $(\rho_0, u_0)$ are assumed to be locally bounded measurable functions with $\rho_0>0.$ The boundary datum are prescribed to impose the inflow conditions. Specifically, the functions $u_{b_l}(t)$ and $u_{b_r}(t)$ are bounded measurable, with $u_{b_l}(t)>0$ and $u_{b_r}(t)<0$ for all $t>0.$ Furthermore, the associated density functions $\rho_{b_l}(t), \rho_{b_r}(t)$ are assumed to be positive, locally bounded, and measurable. These assumptions ensure the existence of the inflow boundary conditions for the system. They also allow for the application of weak or measure-valued solution frameworks, which will be employed to analyze the solvability of the problem under consideration.

The initial value problem \eqref{e1.1}-\eqref{e1.2} has been extensively studied over the past few decades. It is now well established that one of the primary challenges lies in the fact that $\rho$ is not generally a function, but a Radon measure. Consequently, the natural framework for seeking weak solutions to \eqref{e1.1}-\eqref{e1.2} is the space of Radon measures.

Several notions of weak solutions have been introduced in the literature, including measure-valued solutions \cite{Bouchut94}, duality solutions, and solutions derived via the vanishing viscosity method \cite{BouchutJames99, BouchutJames98, Boudin00}. The global existence of weak solutions using mass and momentum potentials was demonstrated in \cite{BrenierGrenier98, d3}, while explicit formulas for solutions were obtained using generalized potentials and variational principles in \cite{ERykovSinai96, Wang01, WHD97}.

 A novel approach to the global existence of weak solutions for the 1D pressureless gas dynamics equations was introduced by Natile and Savar\'{e} \cite{NS2009}, who constructed sticky particle solutions using a metric projection onto the cone of monotone maps. Subsequently, Cavalletti et al. \cite{cavalletti} provided a more direct proof by employing the concept of differentiability of metric projections, as developed by Haraux.

In another line of work, Nguyen and Tudorascu \cite{nguyen, NT08} established a general global existence result for \eqref{e1.1}-\eqref{e1.2}, with or without viscosity, by constructing entropy solutions for suitable scalar conservation laws. They further proved the uniqueness of solutions using the contraction principle in the Wasserstein metric. Additional uniqueness results were provided by Wang and Ding \cite{WangDing97} and Huang and Wang \cite{Wang01}, who used generalized characteristics introduced by Dafermos \cite{dafermos1}.

In contrast, Bressan and Nguyen \cite{BN2014} demonstrated non-uniqueness and non-existence of solutions in the multidimensional case by constructing specific initial data.

Regarding the numerical methods for the pressureless gas dynamics model, we refer to \cite{mathiaud, bouchut-jin} for details. However, the initial-boundary value problem for \eqref{e1.1} with \eqref{e1.2}-\eqref{e1.3} was first investigated in \cite{NOSS22}, making a significant advancement in the study of this class of problems.
\subsection{Main result} In this subsection, we present our main result. Building upon the existing literature, we investigate the global existence of an entropy solution to the system \eqref{e1.1}-\eqref{e1.4}. As noted earlier, Neumann et al. \cite{NOSS22} constructed measure-valued solutions for the initial-boundary value problem \eqref{e1.1}-\eqref{e1.3}, utilizing the framework of generalized potentials. For the sake of completeness, we provide a short overview of the method and the associated solution concept.

Rykov et al. \cite{ERykovSinai96} introduced a generalized variational principle for pressureless gas dynamics, extending the classical variational principle of Lax and Oleinik for scalar conservation laws, particularly the Burgers equation. Subsequent works by Huang and Wang \cite{Wang01} and Ding et al. \cite{WHD97} extended the method of generalized potentials to cases where the initial datum $u_0$ is not continuous and $\rho_0 \geq 0$ is a Radon measure, respectively. In this setting, the solution concept is the following: we show that $(\rho,u)$ is actually a weak solution to the system \eqref{e1.1}. The construction begins with locally bounded measurable functions $m(x,t)$ and $u(x,t),$ where $m(x,t)$ is of locally bounded variation ($BV$) in $x$ for almost every $t.$ This function $m$ defines a Lebesgue-Stieltjes measure $dm$ whose derivative in the distributional sense corresponds to a Radon measure $\rho=m_x.$ These two representations are same through the identification
\begin{align*}
    -\left\langle m, \varphi_x \right\rangle=-\int_{-\infty}^{\infty}\varphi_x m\, dx=\int_{-\infty}^{\infty}\varphi m_x \,dx=\int_{-\infty}^{\infty}\varphi\, dm=\left\langle v, \varphi \right\rangle \,\,\,\, \text{for all}\,\,\,\, \varphi\in C^{\infty}_c(\mathbb{R}).
\end{align*}
Furthermore, similar identification allows us to define
\begin{align*}
    \left\langle vu, \varphi\right\rangle=\int_{-\infty}^{\infty}\varphi u \, dm.
\end{align*}
These identifications lead to the notion of generalized solution to \eqref{e1.1}. The first equation of \eqref{e1.1} can then be expressed in the distributional sense as:
\begin{align*}
    0=\left\langle v, \varphi_t\right\rangle+ \left\langle vu ,\varphi_x\right\rangle =-\iint \varphi_{xt} m \,dx\,dt+\iint \varphi_x u \,dm\, dt.
\end{align*}
Similarly, the second equation of \eqref{e1.1} takes the form:
\begin{align*}
  0=\left\langle vu, \varphi_t\right\rangle+ \left\langle vu^2 ,\varphi_x\right\rangle =\iint u \varphi_t \,dm\,dt+\iint u^2 \varphi_x dm\, dt.
\end{align*}
Therefore, the weak formulation to the system \eqref{e1.1} is the following:
\begin{defn}\label{intro-defn-weak formulation}
    The pair $(\rho, u)$ is said to be a generalized solution to the system \eqref{e1.1} if the following integral identities
\begin{align}
    &\iint \varphi_t m\,dx\, dt-\iint \varphi u \,dm\,dt=0,\label{weak formulation}\\
    &\iint \varphi_t u + \varphi_x u^2 \, dm\,dt=0,\label{wf2}
\end{align} hold for all test functions $\varphi \in C^{\infty}_c((0,1)\times\mathbb{R}^+),$ where the distributional derivative $m_x$ defines the Radon measure $\rho.$ 
\end{defn}
The construction of the generalized solution $(m, u)$ proceeds in two stages. First, we introduce generalized potentials $F(y, x,t), G_{b_l}(\tau, x, t), G_{b_r}(\xi, x, t),$ to construct $u$ and $m.$ Subsequently, we define the momentum and energy potentials $q(x,t)$ and $E(x,t),$ respectively, alongside an auxiliary functional $H(\cdot, x,t).$ Moreover, by establishing relations between the measures $dq,$ $dE,$ and  $dm,$ we verify that $q, E, m$ satisfy Definition \ref{intro-defn-weak formulation}.

Now we state our main result below.
\begin{thm}\label{TH1.3}
Let $\rho_0(x), \rho_{b_l}(t)>0, \rho_{b_r}(t)>0,$ and $u_0(x), u_{b_l}(t)>0, u_{b_r}(t)<0,$ are locally bounded measurable functions, then the pair $(m,u)$ given by Definition \ref{d3}-\ref{d2} is a global weak entropy solution to the system \eqref{e1.1}-\eqref{e1.4} in the sense of Definition \ref{intro-defn-weak formulation} and \ref{def:sol}.
\end{thm}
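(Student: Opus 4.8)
The plan is to construct the solution $(m,u)$ explicitly via a generalized variational principle adapted to the strip, mirroring the two-stage construction announced after Definition~\ref{intro-defn-weak formulation}, and then to verify the weak formulation and the entropy condition directly from the structure of the potentials. First I would set up the three generalized potentials: $F(y,x,t)$ encoding the contribution of the initial data $(\rho_0,u_0)$ carried along characteristics $x = y + u_0(y)t$, and the two boundary potentials $G_{b_l}(\tau,x,t)$ and $G_{b_r}(\xi,x,t)$ encoding mass entering from the left boundary at time $\tau$ (travelling with speed $u_{b_l}(\tau)>0$) and from the right boundary at time $\xi$ (travelling with speed $u_{b_r}(\xi)<0$). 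The velocity $u(x,t)$ at a point is then defined by the minimizer(s) of the combined potential: one selects, among the initial point $y$ and the boundary emission times $\tau,\xi$ whose characteristics can reach $(x,t)$ inside the strip, the one(s) minimizing the appropriate action, and assigns the corresponding velocity ($u_0(y)$, $u_{b_l}(\tau)$, or $u_{b_r}(\xi)$). The mass function $m(x,t)$ is defined as the total mass to the left of $x$ at time $t$, i.e.\ the $\rho_0$-mass of initial points plus the $\rho_{b_l}$-mass of left-boundary emissions plus the $\rho_{b_r}$-mass of right-boundary emissions whose characteristics lie to the left of $x$; monotonicity of the minimizer selection in $x$ makes $m(\cdot,t)$ a well-defined $BV$ function and hence defines $\rho = m_x$ as a Radon measure.

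Next I would establish the regularity and consistency properties needed: that the minimizer is unique for a.e.\ $(x,t)$ so that $u$ is well-defined a.e.\ with respect to $dm\,dt$; that $m(\cdot,t)$ is right-continuous and nondecreasing with the jumps corresponding to mass concentrations (the $\delta$-shocks); and that the initial and boundary traces are attained, i.e.\ $m(\cdot,t)\to m(\cdot,0)$ recovering $(\rho_0,u_0)$ as $t\downarrow 0$, and the fluxes $\rho u$ at $x=0,1$ match $\rho_{b_l}u_{b_l}$, $\rho_{b_r}u_{b_r}$ in the appropriate weak sense of Definition~\ref{def:sol}. Crucially one needs the strip geometry to be handled: a characteristic emitted from the interior or from one boundary may hit the opposite boundary in finite time, at which point its mass must either exit (if the sign of its velocity points outward) or be absorbed into the boundary concentration; tracking this bookkeeping so that no mass is created or lost is what makes $m$ globally well-defined.

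Then I would verify the two integral identities \eqref{weak formulation}--\eqref{wf2}. The standard device is to introduce the momentum potential $q(x,t)$ and energy potential $E(x,t)$ together with the auxiliary functional $H(\cdot,x,t)$, show that $dq = u\,dm$ and $dE = u^2\,dm$ as measures in $x$ for a.e.\ $t$ (this is where the variational characterization of $u$ as the derivative of the minimal action enters), and then show that the pairs $(m,q)$ and $(q,E)$ satisfy the corresponding linear transport relations $\partial_t m = \partial_x(\text{something})$ coming directly from the fact that along each characteristic the carried mass and velocity are constant. Testing against $\varphi\in C^\infty_c((0,1)\times\mathbb{R}^+)$ and integrating by parts, using that the support of $\varphi$ stays away from the boundaries, yields \eqref{weak formulation} and \eqref{wf2}. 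For the entropy inequality in the sense of Definition~\ref{def:sol}, I would use that the selected velocity is, by construction, the one coming from the minimal-action characteristic, which forces the Oleinik-type one-sided estimate (characteristics only merge, never split) — equivalently, $u$ decreases across concentrations — and this is precisely the entropy admissibility condition; one checks it against the admissible convex entropy pairs by a direct computation on the explicit solution.

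The main obstacle I expect is the well-posedness of the minimizer selection and the mass bookkeeping at the boundaries simultaneously: one must define a single scalar ordering of the three families of characteristics (interior, left-boundary, right-boundary) so that the "mass to the left of $x$" is monotone and the velocity assignment is consistent, \emph{and} correctly decide, when a left-moving boundary characteristic and a right-moving one or an interior one meet, which mass concentrates where and with what velocity (momentum-conserving collision), all while respecting that characteristics cannot leave the strip except by being absorbed at a boundary. Getting the generalized potentials $F$, $G_{b_l}$, $G_{b_r}$ to interact correctly — in particular defining $H(\cdot,x,t)$ so that its minimization encodes the full collision rule and the boundary mass accumulation — and then proving the key measure identities $dq=u\,dm$, $dE=u^2\,dm$ in this three-family setting, is the technical heart of the argument; once those are in place, the weak formulation and entropy condition follow by now-standard manipulations as in \cite{ERykovSinai96, WHD97, Wang01, NOSS22}.
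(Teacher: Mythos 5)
Your proposal follows essentially the same route as the paper: the three potentials $F$, $G_{b_l}$, $G_{b_r}$ with their minimizers defining $u$ and the cumulative mass $m$, then the momentum/energy potentials $q$, $E$ and the auxiliary functional $H$ yielding the measure identities $dq=u\,dm$, $dE=\tfrac12 u^2\,dm$ (your normalization of $E$ differs only by the factor $\tfrac12$), the weak formulation via integration by parts against test functions supported in $(0,1)\times(0,\infty)$, the boundary traces in the sense of Definition~\ref{def:sol} with possible mass accumulation, and the Oleinik-type entropy condition across discontinuities. The technical heart you correctly identify — making the three families of characteristics interact consistently, in particular building the initial potential $F(1,x,t)$ into $G_{b_r}$ so that the mass bookkeeping and the ordering of minimizers work on the strip — is exactly what the paper carries out in Lemmas~\ref{unique}, \ref{lem:curves}, \ref{l51}, \ref{lem:integrals} and \ref{lem: h function}.
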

It should be noted that in the above theorem, the initial condition holds in its classical sense. Turning to the boundary conditions \eqref{e1.3}-\eqref{e1.4}, we recall that, as established in the theory of conservation laws \cite{bar,JosephSahoo12,Le1}, boundary data cannot be arbitrarily prescribed.  One cannot arbitrarily prescribe boundary data, because a priori there is no control of the sign of $u(0+,t)$ and $u(1-, t)$, except in the case when $u_0$ is positive and hence $u(x,t) > 0$ everywhere. At this point,  we recall that $u_{b_l}$ was assumed to be positive, whereas $u_{b_r}$ is assumed to be negative.\\
We shall show the following: If $t>0$ is a Lebesgue point of $u_{b_l}$ and $\rho_{b_l}$ and $\min\left\{F(0,t), G_{b_r}(0,t)\right\} > G_{b_l}(0,t)$, then $\lim_{x\to 0+} u(x,t) = u_{b_l}(t)$. If in addition $u_{b_l}$ is continuously differentiable and $\rho_{b_l}$ is locally Lipschitz continuous, then $\lim_{x\to 0+} \rho(x,t)u(x,t) = \rho_{b_l}(t)u_{b_l}(t)$. On the other hand, if  $\min\left\{F(0,t), G_{b_r}(0,t)\right\} \leq G_{b_l}(0,t)$ then $u(0+,t) < 0$ and the boundary condition \eqref{e1.3} cannot be fulfilled. Rather, it may happen that mass accumulates at the boundary in the form of $\delta\cdot (m(0+,t) - m(0,t))$. We show similar assertions for the right boundary also. We prove that the boundary condition holds in its classical sense when $\min\left\{F(1, t), G_{b_l}(1, t)\right\}>G_{b_r}(1, t),$ otherwise mass can accumulate at the right boundary. This phenomenon is exhibited through some specific example with Riemann type data in Section \ref{sec:5}.  For further aspects of boundary conditions for systems involving measure solution, see \cite{ma2,NNOS17}.
\subsection{Extensions and open problems} In this subsection, we mention some possible extensions and open problems related to the gas dynamics model in the initial boundary setting. 

\noindent $\bullet$ Recently, in \cite{Sen2024}, Sen and Sen investigated the pure initial value problem for a nonhomogeneous gas dynamics model where the nonhomogeneous term depends not only on the solutions $(\rho, u),$ but also explicitly on $x$ and $t.$ Specifically, the system is given by:
    \begin{align*}
        &\partial_t \rho + \partial_x (\rho u) = 0,\\ &\partial_t (\rho u) + \partial_x (\rho u^2) = \frac{1}{t+\alpha}\left(\frac{x}{t+\alpha}-u\right)\rho, \,\,\, \alpha \in \mathbb{R}^+.
    \end{align*}
A natural extension of this work would involve constructing a measure-valued solution for the initial-boundary value problem associated with the above system. In \cite{Sen2024}, an initial potential 
 \[F(x,t):= \min_{y\in \mathbb{R}} F(y, x, t)=\int_0^y\left(\left(\frac{\eta}{2\alpha}+\frac{u_0(\eta)}{2}\right)(t+\alpha)+\left(\frac{\eta}{2}-\frac{u_0(\eta)\alpha}{2}\right)\frac{\alpha}{(t+\alpha)}-x\right) \rho_0(\eta)d\eta\]
 was introduced to analyze the initial value problem. To study the corresponding initial-boundary value problem, it would be necessary to introduce appropriate boundary potentials and define suitable characteristic triangles. This undertaking appears to be both intricate and mathematically rigorous, requiring careful analysis to address the added difficulties introduced by the nonhomogeneous terms.\\

\noindent $\bullet$ Next, we consider the study of a gas dynamics system with a discontinuous (in space) right-hand side. An example of such a system is given by:
 \begin{align*}
    &\partial_t \rho + \partial_x (\rho u) = 0,\\ &\partial_t (\rho u) + \partial_x (\rho u^2) = H(x)\rho
 \end{align*}
 with the initial data $(\rho(x, 0), u(x, 0))=(\rho_0(x), u_0(x)),$ where a discontinuous function $H(x)$ is defined as:
 $$H(x)=\begin{cases}
     \alpha, \,\,\,\, &\text{if}\,\,\, x>0\\
     \beta \,\,\,\, &\text{if}\,\,\, x<0,
 \end{cases}$$
 for $\alpha, \beta \in \mathbb{R}, \alpha\neq \beta.$ In other words, the above problem can be interpreted as a gas dynamics system with a discontinuous flux:
 \begin{align*}
    &\partial_t \rho + \partial_x \left(\rho f(x, t, v)\right) = 0,\\ &\partial_t (\rho v) + \partial_x \left(\rho v f(x, t, v)\right) =0,
 \end{align*}
 where
 \begin{align*}
     f(x, t, v)=\begin{cases}
         v+\alpha t,\,\,\, x>0\\
         v+\beta t,\,\,\, x<0,
     \end{cases}
     \,\,\,\,\,\,\,\,\,\,\text{and}\,\,\,\,\,\,\,\,\,\,v=\begin{cases} 
     u-\alpha t,\,\,\, x>0\\
     u-\beta t,\,\,\, x<0.
     \end{cases}
 \end{align*}
 Given that \cite{NOSS22} provides techniques to address the left boundary and the current work develops methods to handle the right boundary, it remains an open question whether these ideas can be combined to analyze the problem described above. It is important to note that this is an initial value problem, with no data prescribed on the interface $x=0.$ To address this, one must carefully study the behavior of the characteristic lines (or curves) passing through the interface. The discontinuity on the right-hand side introduces additional complexities, particularly in understanding how the interface influences the structure of solutions. Moreover, these aspects require rigorous analysis to ensure a well-defined solution framework.

\subsection{Organization of the article} The paper is organized as follows: In Section \ref{sec:2}, we construct solutions, i.e., $(m, u),$ based on generalized potentials. Section \ref{sec:3} is dedicated to demonstrating that the constructed solution satisfies the weak formulation in the sense of Definition \ref{intro-defn-weak formulation}, as well as discussing the initial-boundary conditions and the entropy admissibility of solution. In what follows, we prove Theorem \ref{TH1.3}. Finally, Section \ref{sec:5} presents specific examples that validate and illustrate the theoretical results developed in this work.
\section{Construction of solution}\label{sec:2}
This section defines the potentials for the initial and boundary manifolds. The smallest potential at $(x,t),$ determines characteristic triangles that specify the regions influencing the solution. These non-intersecting triangles form a continuous curve, with its time derivative matching the velocity from momentum influx. For points $(x,t)$ in the domain $\Omega=[0,1]\times[0,\infty[$, we introduce the initial and boundary functionals as
\begin{align*}
&F(y,x,t)=\int_{0}^{y}(tu_0(\eta)+\eta-x)\rho_0(\eta)d\eta\,,\\
&G_{b_l}(\tau, x, t)=\int_{0}^{\tau} [x-u_{b_l}(\eta)(t-\eta)]\rho_{b_l}(\eta)u_{b_l}(\eta)d\eta\,, \\
&G_{b_r}(\xi, x, t)=\int_{0}^{\xi} [x-1-u_{b_r}(\eta)(t-\eta)]\rho_{b_r}(\eta)u_{b_r}(\eta)d\eta+F(1,x,t)\,. 
\end{align*}
Given $(x, t),$ we denote $y_{*}(x,t)$ and $y^* (x,t)$ for the left most and right most points respectively in the interval [0,1], such that 
\begin{equation}\label{InitPot}
F(x,t):=\min_{0\leq y\leq 1}F(y,x,t)= F(y_{*}(x,t), x,t)= F(y^{*}(x,t), x,t)\,.
\end{equation}
$F(x,t)$ is called the initial potential.

Similarly, given $(x,t)$, we denote by $\tau^{*}(x,t)$ and $\tau_{*}(x,t)$ the upper most and lower most points on the $t$-axis respectively such that
\begin{equation}\label{leftBdpot}
G_{b_l}(x,t):=\min_{\tau \geq 0}G_{b_l}(\tau, x, t)= G(\tau^{*}(x,t), x, t)=G(\tau_{*}(x,t), x, t)\,.
\end{equation}
We call $G_{b_l}(x,t)$ the left boundary potential.
And finally, for the right boundary, we call these points $\xi^{*}(x,t)$ and $\xi_{*}(x,t)$ and define
\begin{equation}\label{rightBdpot}
G_{b_r}(x,t):=\min_{\xi \geq 0}G_{b_r}(\xi, x, t)=G(\xi^{*}(x,t), x, t)=G(\xi_{*}(x,t), x, t)\,,
\end{equation}
where $G_{b_r}(x,t)$ is the right boundary potential.
Note that $\tau^{*}(x,t)\leq t $ and $\xi^{*}(x,t)\leq t$ by straightforward arguments using the signs of the integrands.

All these minima, and thus the potentials, exist as real numbers, because of the signs of $u_{b_l}$ and $u_{b_r}$. Observe that for a fixed $t>0$ the functions $F(x,t)$ and $G_{b_r}(x,t)$ are monotonically decreasing in $x$ while $G_{b_l}(x,t)$ is monotonically increasing.
\begin{rem}
The key difference between the left boundary potential $G_{b_l}(x, t)$, as introduced in \cite{NOSS22}, and the newly defined right boundary potential $G_{b_r}(x, t)$ is the explicit dependence of $G_{b_r}(x, t)$ on the initial potential $F(x,t)$. More precisely, while $G_{b_l}$ is constructed solely based on boundary data at $x=0,$ the formulation of $G_{b_r}$ involves contributions from both the boundary data at $x=1$ and the initial potential to accurately capture the mass and its transportation throughout the domain.
\end{rem}
Now we collect some properties of the minimizers.
\begin{lem}\label{lnew}
With our assumptions on the initial and boundary data we have
\begin{enumerate}
\item For fixed $x,$ $\tau_{*}(x,t)$ and $\tau^{*}(x,t)$ are monotonically increasing in $t$ and for fixed $t$ monotonically decreasing in $x$. Moreover, we have for $t_1<t_2$ that $\tau^{*}(x,t_1)\leq\tau_{*}(x,t_2)$ and for $x_1<x_2$ that $\tau_{*}(x_1,t)\geq\tau^{*}(x_2,t)$.
\item For fixed $x,$ $\xi_{*}(x,t)$ and $\xi^{*}(x,t)$ are monotonically increasing in $t$ and for fixed $t$ monotonically increasing in $x$.  For $t_1<t_2$ that $\xi^{*}(x,t_1)\leq\xi_{*}(x,t_2)$ and for $x_1<x_2$ that $\xi^{*}(x_1,t)\leq\xi_{*}(x_2,t)$. 
\item For fixed $t,$ $y_{*}(x,t)$ and $y^{*}(x,t)$ are monotonically increasing in $x$ and for $x_1<x_2$ we have $y^{*}(x_1,t)\leq y_{*}(x_2,t)$.
\item $\tau_{*}(0,t)=\tau^{*}(0,t)=t,$ $\xi_{*}(1,t)=\xi^{*}(1,t)=t$ and $y_{*}(x,0)=y^{*}(x,0)=x$.
\item $y_{*}(x,t)$, $\tau_{*}(x,t)$ and $\xi_{*}(x,t)$ are lower semicontinuous while $y^{*}(x,t)$, $\tau^{*}(x,t)$ and $\xi^{*}(x,t)$ are upper semicontinuous.
\end{enumerate}
\end{lem}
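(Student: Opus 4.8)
The plan is to derive all five assertions from the structure of the functionals $F$, $G_{b_l}$, $G_{b_r}$, exploiting that each is an integral whose integrand is affine in $(x,t)$ with a definite sign coming from the hypotheses $u_{b_l}>0$, $u_{b_r}<0$, $\rho_0,\rho_{b_l},\rho_{b_r}>0$. The main tool is the standard comparison/monotonicity argument for argmin of a one-parameter family: if $\Phi(s,\cdot)$ is a family of functions such that $s\mapsto \Phi(s,b)-\Phi(s,a)$ is monotone for $a<b$, then the (leftmost/rightmost) minimizers move monotonically in $s$, and one gets the sharper "interleaving" inequalities $s_1<s_2 \Rightarrow \text{argmax-minimizer}(s_1)\le \text{argmin-minimizer}(s_2)$ whenever the difference is \emph{strictly} monotone in the appropriate region. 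So first I would record, for each potential, the sign of the relevant partial differences.

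For (1): fix $x$. For $\tau_1<\tau_2$, the quantity $G_{b_l}(\tau_2,x,t)-G_{b_l}(\tau_1,x,t)=\int_{\tau_1}^{\tau_2}[x-u_{b_l}(\eta)(t-\eta)]\rho_{b_l}u_{b_l}\,d\eta$ is, as a function of $t$, affine with slope $-\int_{\tau_1}^{\tau_2}u_{b_l}(\eta)^2\rho_{b_l}(\eta)\,d\eta<0$; hence it is strictly decreasing in $t$, which forces the minimizing $\tau$ to be nondecreasing in $t$ (both the leftmost $\tau_*$ and rightmost $\tau^*$), and gives $\tau^*(x,t_1)\le\tau_*(x,t_2)$ for $t_1<t_2$ by the usual "if the minimizer jumped back, the strict inequality would be violated" argument. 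Similarly $G_{b_l}(\tau_2,x,t)-G_{b_l}(\tau_1,x,t)$ has $x$-slope $\int_{\tau_1}^{\tau_2}\rho_{b_l}u_{b_l}\,d\eta>0$, so it is strictly increasing in $x$, giving the decreasing-in-$x$ statement and $\tau_*(x_1,t)\ge\tau^*(x_2,t)$ for $x_1<x_2$. Assertion (2) for $\xi$ is identical in spirit: the $t$-slope of $G_{b_r}(\xi_2,\cdot)-G_{b_r}(\xi_1,\cdot)$ is $-\int_{\xi_1}^{\xi_2}u_{b_r}^2\rho_{b_r}<0$ (again strictly decreasing in $t$, so $\xi$ increasing in $t$), and the $x$-slope is $\int_{\xi_1}^{\xi_2}\rho_{b_r}u_{b_r}\,d\eta<0$ since $u_{b_r}<0$ — note the sign flip, which is exactly why $\xi$ is \emph{increasing} in $x$ whereas $\tau$ is decreasing; here one must be careful that the $F(1,x,t)$ term added into $G_{b_r}$ is independent of $\xi$ and so drops out of every difference $G_{b_r}(\xi_2,\cdot)-G_{b_r}(\xi_1,\cdot)$, which is the point that makes the argument go through unchanged. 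Assertion (3) is the analogous computation for $F$: $F(y_2,x,t)-F(y_1,x,t)=\int_{y_1}^{y_2}(tu_0(\eta)+\eta-x)\rho_0(\eta)\,d\eta$ has $x$-slope $-\int_{y_1}^{y_2}\rho_0<0$, strictly decreasing in $x$, so $y_*,y^*$ increase in $x$ and $y^*(x_1,t)\le y_*(x_2,t)$.

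For (4) I would simply evaluate: at $x=0$, the integrand of $G_{b_l}$ is $-u_{b_l}(\eta)(t-\eta)\rho_{b_l}(\eta)u_{b_l}(\eta)$, which is $<0$ for $\eta<t$ and $>0$ for $\eta>t$ (using $u_{b_l}>0$), so $G_{b_l}(\tau,0,t)$ is strictly decreasing on $[0,t]$ and strictly increasing on $[t,\infty)$, whence $\tau_*(0,t)=\tau^*(0,t)=t$; the case $x=1$ for $\xi$ is symmetric using $u_{b_r}<0$ (the extra $F(1,1,t)$ term is a $\xi$-independent constant), and at $t=0$ the integrand of $F$ is $(\eta-x)\rho_0(\eta)$, negative for $\eta<x$ and positive for $\eta>x$, giving $y_*(x,0)=y^*(x,0)=x$. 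Assertion (5) is a soft general fact: pointwise limits are handled via the closedness of the set of minimizers under uniform-on-compacts convergence of $(x_k,t_k)\to(x,t)$ — since $F,G_{b_l},G_{b_r}$ are continuous in all their arguments (integrals of locally bounded integrands over intervals, with the upper limit of integration varying continuously), any limit of minimizers is a minimizer, so the leftmost minimizer can only drop in the limit (lower semicontinuity) and the rightmost can only rise (upper semicontinuity).

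The only genuine subtlety — the step I expect to be the main obstacle — is the \emph{strict} interleaving inequalities in (1)–(3) (e.g. $\tau^*(x,t_1)\le\tau_*(x,t_2)$ for $t_1<t_2$) rather than just weak monotonicity, because they require that the difference $G_{b_l}(\tau_2,x,t)-G_{b_l}(\tau_1,x,t)$ be \emph{strictly} monotone in $t$, which can fail if $\rho_{b_l}$ or $u_{b_l}$ vanishes on a set of positive measure between $\tau_1$ and $\tau_2$. Under the standing assumption $\rho_{b_l},\rho_{b_r}>0$ and the inflow conditions $u_{b_l}>0>u_{b_r}$ (strict), the $t$-slopes $-\int u_{b_l}^2\rho_{b_l}$ etc. are strictly negative whenever $\tau_1<\tau_2$, so strictness holds; I would state this dependence on strict positivity explicitly. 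The remaining care is bookkeeping: consistently tracking which endpoint ($*$ versus ${}^*$) one lands on when the minimizer is non-unique, which is dictated by choosing leftmost/rightmost to make each inequality the "safe" direction.
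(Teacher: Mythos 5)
Your proposal is correct and follows essentially the same route as the paper: the monotonicity and interleaving statements are obtained from the same exchange/comparison argument (adding the two minimality inequalities, equivalently noting the sign of the $t$- and $x$-slopes of $G(\cdot_2,x,t)-G(\cdot_1,x,t)$, with the $\xi$-independent term $F(1,x,t)$ cancelling), part (4) by direct evaluation of the sign of the integrand, and (5) by the standard limit-of-minimizers argument. The paper itself only writes out (2) and the new part of (4), citing \cite{NOSS22} and \cite{WHD97} for the remaining items, and your treatment of those parts matches the cited arguments.
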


\begin{proof}
A similar result can be found in \cite{NOSS22}, and this is in turn based on the results in \cite{WHD97}. For completeness, we give the proof of (2) and the part of (4) which is new for the domain $\Omega$.\\
To prove (2) let $x,\,t_1,\,t_2>0$ be arbitrary but fixed, and $\xi_1$ a minimizer of $G_{b_r}(\xi,x,t_1)$ and $\xi_2$ one of $G_{b_r}(\xi,x,t_2)$. Now we have
\[
0\leq G_{b_r}(\xi_2,x,t_1)-G_{b_r}(\xi_1,x,t_1)\,,\ \text{and } 0\leq G_{b_r}(\xi_1,x,t_2)-G_{b_r}(\xi_2,x,t_2)\,.
\]
Adding the two inequalities and using the definition of $G_{b_r}$ results in 
\[
0 \leq (t_2-t_1)\int_{\xi_1}^{\xi_2}\rho_{b_r}(\eta)u_{b_r}(\eta)^2 d \eta\,.
\]
Since the term in the integral is positive by our assumptions we conclude that the minimizers have to be increasing in $t$. \\
Now on the other hand fixing $t,\,x_1,\,x_2$ and denoting by $\xi_1$ a minimizer of $G_{b_r}(\xi,x_1,t)$ and by $\xi_2$ one of $G_{b_r}(\xi,x_2,t)$ we derive in the same way
\[
0\leq (x_1-x_2)\int_{\xi_1}^{\xi_2}\rho_{b_r}(\eta)u_{b_r}(\eta) d \eta\,.
\]
Using our assumption that $u_{b_r}$ is negative one infers that the minimizers are increasing in $x$.\\
For (4) note that 
\[
G_{b_r}(\xi,1,t)=\int_0^\xi u_{b_r}(\eta)^2\rho_{b_r}(\eta)(\eta-t)d\eta+F(1,1,t)\,,
\]
so the minimum is attained uniquely at $\xi=t$ because $\rho_{b_r}>0$.
\end{proof}
The following lemma gives uniqueness of the minimizers along lines connecting a point to the respective minimizer at the boundary corresponding to the potential. 
\begin{lem}\label{unique}
Let $(x,t)$ be a fixed point with $x\in ]0,1[$ and $t>0$.
\begin{enumerate}
\item Let $y=y_1$ be any minimizer of the functional $F(y,x, t)$. Then for any given point  $(x', t')\neq(x,t)$ on the line segment joining $(x,t)$ and $(y_1, 0)$, the minimizer of $F(y,x',t')$ is unique and it is $y=y_1$.
\item Let $\tau= \tau_{1}$ be any minimizer of the functional $G_{b_l}(\tau, x, t)$. Then for any point $(\bar x, \bar t )\neq(x,t)$ on the line segment joining $(x,t)$ and
$(0, \tau_{1})$ the minimizer of $G_{b_l}(\tau,\bar x, \bar t)$ is unique and it is $\tau_{1}$.
\item Let $\xi= \xi_{1}$ be any minimizer of the functional $G_{b_r}(\xi, x, t)$.  Then for any point $(\bar x, \bar t )\neq(x,t)$ on the line segment joining $(x,t)$ and
$(1, \xi_{1})$ the minimizer of $G_{b_r}(\xi,\bar x, \bar t)$ is unique and it is $\xi_{1}$.
\end{enumerate}
\end{lem}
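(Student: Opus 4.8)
The plan is to prove all three parts by the same convexity-in-the-parameter argument, since each functional is \emph{affine} in the spatial-time variables $(x,t)$ once the integration parameter is fixed. Let me describe part (1); parts (2) and (3) are identical after relabelling. Fix $(x,t)$ with $x\in\,]0,1[$, $t>0$, let $y_1$ be any minimizer of $y\mapsto F(y,x,t)$, and let $(x',t')\neq(x,t)$ lie on the segment joining $(x,t)$ and $(y_1,0)$. Write $(x',t')=(1-s)(x,t)+s(y_1,0)$ for some $s\in\,]0,1]$. The key observation is that for every fixed $\eta$, the integrand $\phi_\eta(x,t):=(tu_0(\eta)+\eta-x)\rho_0(\eta)$ is affine in $(x,t)$, hence $F(y,\cdot,\cdot)$ is affine in $(x,t)$ for each fixed $y$; consequently
\[
F(y,x',t') = (1-s)\,F(y,x,t) + s\,F(y,y_1,0).
\]
Now $F(y,y_1,0)=\int_0^y(\eta-y_1)\rho_0(\eta)\,d\eta$, which as a function of $y$ is strictly minimized at $y=y_1$ (its $y$-derivative is $(y-y_1)\rho_0(y)$, which vanishes only at $y=y_1$ and changes sign there, using $\rho_0>0$); so $F(y,y_1,0)>F(y_1,y_1,0)$ for all $y\neq y_1$. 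Meanwhile $F(y,x,t)\geq F(y_1,x,t)$ for all $y$ because $y_1$ minimizes $F(\cdot,x,t)$. Adding these with weights $1-s\geq 0$ and $s>0$ gives $F(y,x',t')>F(y_1,x',t')$ for every $y\neq y_1$, which is exactly the claimed uniqueness, with $y_1$ the minimizer.

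For part (2), the integrand of $G_{b_l}$ is $[x-u_{b_l}(\eta)(t-\eta)]\rho_{b_l}(\eta)u_{b_l}(\eta)$, again affine in $(x,t)$, so $G_{b_l}(\tau,\cdot,\cdot)$ is affine in $(x,t)$; writing $(\bar x,\bar t)$ as a convex combination of $(x,t)$ and $(0,\tau_1)$ and evaluating at the endpoint $(0,\tau_1)$ gives $G_{b_l}(\tau,0,\tau_1)=\int_0^\tau -u_{b_l}(\eta)(\tau_1-\eta)\rho_{b_l}(\eta)u_{b_l}(\eta)\,d\eta = \int_0^\tau u_{b_l}(\eta)^2(\eta-\tau_1)\rho_{b_l}(\eta)\,d\eta$, whose $\tau$-derivative $u_{b_l}(\tau)^2(\tau-\tau_1)\rho_{b_l}(\tau)$ vanishes and changes sign only at $\tau=\tau_1$, using $u_{b_l}>0$, $\rho_{b_l}>0$; so $\tau_1$ is the strict minimizer there. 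Part (3) is the same with $G_{b_r}$, where at the endpoint $(1,\xi_1)$ one computes $G_{b_r}(\xi,1,\xi_1)=\int_0^\xi u_{b_r}(\eta)^2(\eta-\xi_1)\rho_{b_r}(\eta)\,d\eta + F(1,1,\xi_1)$; the $F$-term is a constant in $\xi$, and the integral term is strictly minimized at $\xi=\xi_1$ using $\rho_{b_r}>0$ (the sign of $u_{b_r}$ is irrelevant here since it enters squared). Then the convex-combination inequality closes each case exactly as in part (1).

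I do not expect a serious obstacle: the whole argument rests on the affine dependence of the functionals on $(x,t)$ together with positivity of the densities (and the sign of $u_{b_l}$ in part (2)). The one point requiring a little care is the degenerate case $s=1$, i.e.\ when $(x',t')$ is the boundary endpoint itself — but that case is handled directly by the strict-minimization computation at the endpoint, with no convex combination needed. A second minor point is to note that "strict minimizer" in the endpoint computations genuinely uses that the integrand's $\eta$-density is strictly positive on a set of full measure near $y_1$ (resp. $\tau_1$, $\xi_1$); the standing assumptions $\rho_0,\rho_{b_l},\rho_{b_r}>0$ give exactly this. Finally, one should remark that the statement implicitly asserts $y_1$ (resp. $\tau_1$, $\xi_1$) remains a minimizer at $(x',t')$, not merely that the minimizer is unique; this is immediate since the two inequalities we added are both equalities when $y=y_1$, so $F(y_1,x',t')$ attains the minimum.
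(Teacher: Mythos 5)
Your proof is correct and is essentially the paper's own argument in different packaging: writing $(\bar x,\bar t)=(1-s)(x,t)+s(1,\xi_1)$ and using affinity of $G_{b_r}(\xi,\cdot,\cdot)$ in $(x,t)$ yields exactly the paper's decomposition, since $1-s=\frac{\bar x-1}{x-1}$ and your endpoint term $s\bigl[G_{b_r}(\xi,1,\xi_1)-G_{b_r}(\xi_1,1,\xi_1)\bigr]$ coincides with the paper's strictly positive remainder $\int_{\xi_1}^{\xi}(\xi_1-\eta)\bigl[\tfrac{\bar x-1}{x-1}-1\bigr]\rho_{b_r}u_{b_r}^2\,d\eta$, while the other summand is the same non-negative multiple of the difference at $(x,t)$. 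The only real difference is that the paper carries out this computation only for part (3), citing earlier works for (1)--(2), whereas your affine/convex-combination phrasing treats all three parts uniformly.
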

\begin{proof}
The proof of (1) and (2) can be found in \cite{NOSS22}. Note that (1) was already included in \cite{WHD97} and \cite{Wang01}.
For (3): We want to show that for $\xi\neq\xi_1\colon$ 
\begin{equation*}
G_{b_r}(\xi, \bar{x}, \bar{t})-G_{b_r}(\xi_1, \bar{x}, \bar{t})>0\,.
\end{equation*}
By definition, we have
\begin{multline*}
G_{b_r}(\xi, \bar{x}, \bar{t})-G_{b_r}(\xi_1, \bar{x}, \bar{t})=\int_{\xi_1}^{\xi}
\left[\bar{x}-1-u_{b_r}(\eta)(\bar{t}-\eta)\right]\rho_{b_r}(\eta)u_{b_r}(\eta)d\eta = \\
=(\bar x-1)\int_{\xi_1}^{\xi} \left[1-u_{b_r}(\eta)\frac{\bar{t}-\eta}{\bar{x}-1}\right]\rho_{b_r}(\eta)u_{b_r}(\eta)d\eta=\\
=(\bar x-1)\int_{\xi_1}^{\xi} \left[1-u_{b_r}(\eta)\frac{\bar{t}-\xi_1}{\bar{x}-1}-u_{b_r}(\eta)\frac{\xi_1-\eta}{\bar{x}-1}\right]\rho_{b_r}(\eta)u_{b_r}(\eta)d\eta\,.
\end{multline*}
Using that $(\bar x,\bar t)$ lies on the line connecting $(x,t)$ and $(1,\xi_1)$ we conclude
\begin{multline*}
G_{b_r}(\xi, \bar{x}, \bar{t})-G_{b_r}(\xi_1, \bar{x}, \bar{t})=\\
=(\bar x-1)\int_{\xi_1}^{\xi} \left[1-u_{b_r}(\eta)\frac{t-\eta+\eta-\xi_1}{x-1}-u_{b_r}(\eta)\frac{\xi_1-\eta}{\bar{x}-1}\right]\rho_{b_r}(\eta)u_{b_r}(\eta)d\eta=\\
=(\bar x-1)\int_{\xi_1}^{\xi} \left[1-u_{b_r}(\eta)\frac{t-\eta}{x-1}\right]\rho_{b_r}(\eta)u_{b_r}(\eta)d\eta+\\
+(\bar x-1)\int_{\xi_1}^{\xi} (\xi_1-\eta)\left[\frac{1}{x-1}-\frac{1}{\bar x-1}\right]\rho_{b_r}(\eta)u_{b_r}^2(\eta)d\eta=\\
=\frac{\bar x-1}{ x-1}\left[G_{b_r}(\xi,x,t)-G_{b_r}(\xi_1,x,t)\right]+\int_{\xi_1}^{\xi} (\xi_1-\eta)\left[\frac{\bar x-1}{x-1}-1\right]\rho_{b_r}(\eta)u_{b_r}^2(\eta)d\eta 
\end{multline*}
Now the first term in the sum is non-negative by assumption and the second one is strictly positive. 
\end{proof}
\subsection{Characteristic triangles and their properties} Now we define the \textit{characteristic triangles} for any $(x, t)\in \Omega:=[0,1]\times [0, \infty).$ We begin with the definition of \textit{characteristic lines}.
\begin{defn}[Interior characteristic lines] For any point $(x_0, t_0)\in \Omega^o,$ we define three sets of characteristic lines:
\begin{align*}
    \begin{cases}
    X_l(x_0, t_0, t):=x=x_0+\frac{x_0-y_{*}(x_0, t_0)}{t_0}(t-t_0),\\
    X_r(x_0, t_0, t):=x=x_0+\frac{x_0-y^{*}(x_0, t_0)}{t_0}(t-t_0).
    \end{cases}
    \begin{cases}
        X^{b_l}_l:=x=x_0+\frac{x_0(t-t_0)}{t_0-\tau^*(x_0, t_0)},\\
        X^{b_l}_r:=x=x_0+\frac{x_0(t-t_0)}{t_0-\tau_{*}(x_0, t_0)}.
    \end{cases}
\end{align*}
\begin{align*}
    \begin{cases}
       X^{b_r}_l:=x=x_0+\frac{(x_0-1)(t-t_0)}{t_0-\xi_{*}(x_0, t_0)},\\
        X^{b_r}_r:=x=x_0+\frac{(x_0-1)(t-t_0)}{t_0-\xi^{*}(x_0, t_0)} .      
    \end{cases}
\end{align*}
\end{defn}

\begin{defn}[Boundary characteristic lines] Let $(x_0, t_0)\in \partial \Omega$ be any given point.
\begin{enumerate}
    \item If $(x_0, t_0)$ lies on the left boundary, i.e., $(x_0, t_0)=(0, t_0),$ we define the boundary characteristic lines as:
    \begin{align*}
        &(i)\,\,\, X_{b_l}(0, t_0, t):=x=\frac{y^*(0, t_0)}{t_0}(t_0-t),\\
        &(ii)\,\,\,X^{b_l}_{b_r}(1, 0, t_0, t):=x=\frac{t-t_0}{\xi^*(0, t_0)-t_0}.
    \end{align*}
    \item If $(x_0, t_0)$ lies on the right boundary, i.e., $(x_0, t_0)=(1, t_0),$ we define boundary characteristic lines as:
    \begin{align*}
        &(i)\,\,\, X_{b_r}(1, t_0, t):=x=1+\frac{y_*(1, t_0)-1}{t_0}(t_0-t),\\
        &(ii)\,\,\, X^{b_r}_{b_l}(1, 0, t_0, t):=1-x=\frac{t-t_0}{\tau^*(1, t_0)-t_0}.
    \end{align*}
\end{enumerate}
\end{defn}
Before defining the characteristic triangles, we need to define the following sets, which we call the \textit{Interior characteristic sets.} Using the interior characteristic lines, we define three types of sets as:
\begin{align*} (I)\,
\begin{cases}
    X^{+}_l:=\left\{x\in (0, 1), 0<t\leq t_0\,\,\Big|\,\, X_l(x_0, t_0, t)\leq x\right\}\\
     X^{-}_r:=\left\{x\in (0, 1), 0<t\leq t_0\,\,\Big|\,\, X_r(x_0, t_0, t)\geq x\right\},
 \end{cases}    
\end{align*}
\begin{align*} (LB)\,
\begin{cases}
    X^{b_l, +}_l:=\left\{x\in (0, 1), 0<t\leq t_0\,\,\Big|\,\, X^{b_l}_l(x_0, t_0, t)\leq x\right\}\\
    X^{b_l, -}_r:=\left\{x\in (0, 1), 0<t\leq t_0\,\,\Big|\,\, X^{b_l}_r(x_0, t_0, t)\geq x\right\},
 \end{cases}   
\end{align*}
\begin{align*} (RB)\,
\begin{cases}
 X^{b_r, +}_l:=\left\{x\in (0, 1), 0<t\leq t_0\,\,\Big|\,\, X^{b_r}_l(x_0, t_0, t)\leq x\right\}\\
X^{b_r, -}_r:=\left\{x\in (0, 1), 0<t\leq t_0\,\,\Big|\,\, X^{b_r}_r(x_0, t_0, t)\geq x\right\}.
\end{cases}    
\end{align*}
\begin{defn}[Interior characteristic triangles] Let $0<x_0<1$ and $t_0>0$ and $F(x_0, t_0), G_{b_l}(x_0, t_0),$ $G_{b_r}(x_0, t_0)$ are given by \eqref{InitPot}, \eqref{leftBdpot}, \eqref{rightBdpot}.
\begin{enumerate}
    \item[(i)] If $F(x_0, t_0)<\min \left\{G_{b_l}(x_0, t_0), G_{b_r}(x_0, t_0)\right\},$ then we define the interior characteristic triangle at $(x_0, t_0)$ as the area enclosed by the region $X^{+}_l\cap X^{-}_r.$
    \item[(ii)] If $G_{b_l}(x_0, t_0)< \min\left\{F(x_0, t_0), G_{b_r}(x_0, t_0)\right\},$ then we define the interior characteristic triangle at $(x_0, t_0)$ as the area enclosed by the region $X^{b_l, +}_l\cap X^{b_l, -}_r.$
    \item[(iii)] If $G_{b_r}(x_0, t_0)< \min\left\{F(x_0, t_0), G_{b_l}(x_0, t_0)\right\},$ then we define the interior characteristic triangle at $(x_0, t_0)$ as the area enclosed by the region $X^{b_r, +}_l\cap X^{b_r, -}_r$
    \item[(iv)] If $G_{b_l}(x_0, t_0)=G_{b_r}(x_0, t_0),$ then we define the characteristic triangle at $(x_0, t_0)$ as the area enclosed by the region $X^{b_l, +}_l\cap X^{b_r, -}_r.$
    \item[(v)] If $F(x_0, t_0)=G_{b_l}(x_0, t_0),$ then we define the interior characteristic triangle at $(x_0, t_0)$ as the area enclosed by the region $X^{b_l, +}_l\cap X^{-}_r.$
    \item[(vi)] If $F(x_0, t_0)=G_{b_r}(x_0, t_0),$ then we define the interior characteristic triangle at $(x_0, t_0)$ as the area enclosed by the region $X^{+}_l\cap X^{b_r, -}_r.$ 
\end{enumerate}
Moreover, interior characteristic triangles associated to $(x_0, t_0)\in \Omega^{o}$ is denoted as $\Delta(x_0, t_0).$
\end{defn}
\begin{figure}[htb]
\centering
\begin{tikzpicture}[scale=0.8]
\draw[-] (0,0) -- (9,0) node[anchor=north] {$x=1$};
\draw[-] (9,0) -- (0,0) node[anchor=north] {$x=0$};
\draw[->] (0,0) -- (0,6) node[anchor=east] {$t$};
\draw[->] (9,0) -- (9,6) node[anchor=east] {$t$};
\draw (0,4) node[anchor=east]{$T$};
\draw[dashed] (0,4) -- (9,4);
\draw (1,4) node[anchor=south] {$x_1$};
\draw	(1,4) circle[radius=1.5pt];
\fill (1,4) circle[radius=1.5pt];
\draw[pattern=north west lines, pattern color=teal] (0,2) to (1,4) to (0,3) to (0,2);
\draw (0,2) node[anchor=east] {$\tau_*(x_1,T)$};
\draw	(0,2) circle[radius=1.5pt];
\draw (0,3) node[anchor=east] {$\tau^*(x_1,T)$};
\draw	(0,3) circle[radius=1.5pt];
\draw (2.5,4) node[anchor=south] {$x_2$};
\draw	(2.5,4) circle[radius=1.5pt];
\fill (2.5,4) circle[radius=1.5pt];
\draw[pattern=horizontal lines, pattern color=teal] (0,1) to (2.5,4) to (2,0) to (0,0) to (0,1);
\draw (0,1) node[anchor=east] {$\tau^*(x_2,T)$};
\draw	(0,1) circle[radius=1.5pt];
\node [anchor=center,rotate=90] at (2,-1) {$y^*(x_2,T)$};
\draw	(2,0) circle[radius=1.5pt];
\draw (4,4) node[anchor=south] {$x_3$};
\draw	(4,4) circle[radius=1.5pt];
\fill (4,4) circle[radius=1.5pt];
\draw[pattern=north east lines, pattern color=teal] (3,0) to (4,4) to (5,0) to (3,0);
\node [anchor=center,rotate=90] at (3,-1) {$y_*(x_3,T)$};
\draw	(3,0) circle[radius=1.5pt];
\node [anchor=center,rotate=90] at (5,-1) {$y^*(x_3,T)$};
\draw	(5,0) circle[radius=1.5pt];
\draw (6,4) node[anchor=south] {$x_4$};
\draw	(6,4) circle[radius=1.5pt];
\fill (6,4) circle[radius=1.5pt];
\draw[pattern=horizontal lines, pattern color=teal] (9,1) to (6,4) to (7,0) to (9,0) to (9,1);
\draw (11.2,1) node[anchor=east] {$\xi^*(x_4,T)$};
\draw	(9,1) circle[radius=1.5pt];
\node [anchor=center,rotate=90] at (7,-1) {$y_*(x_4,T)$};
\draw	(7,0) circle[radius=1.5pt];
\draw (7.5,4) node[anchor=south] {$x_5$};
\draw	(7.5,4) circle[radius=1.5pt];
\fill (7.5,4) circle[radius=1.5pt];
\draw[pattern=north east lines, pattern color=teal] (9,2) to (7.5,4) to (9,3) to (9,2);
\draw (11.2,2) node[anchor=east] {$\xi_*(x_5,T)$};
\draw	(9,2) circle[radius=1.5pt];
\draw (11.2,3) node[anchor=east] {$\xi^*(x_5,T)$};
\draw	(9,3) circle[radius=1.5pt];
\end{tikzpicture}
\vspace*{-.3cm}\caption{Figures of interior characteristic triangles (i)-(v)}\label{bildtriangle_1}
\end{figure}
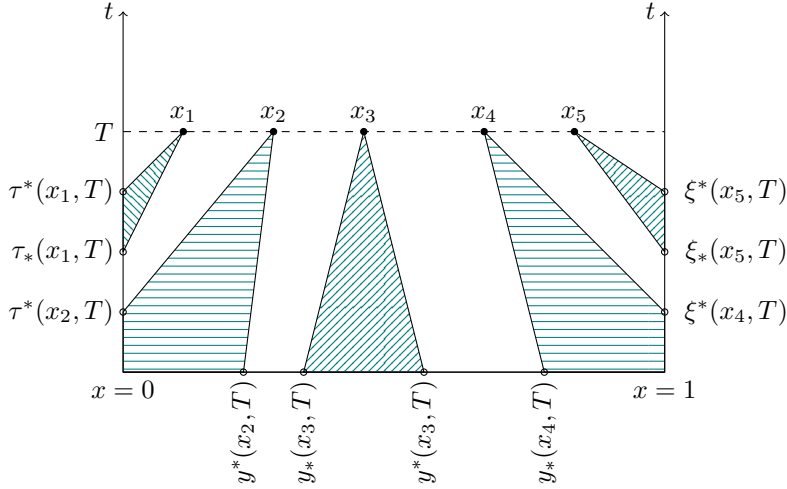

\begin{defn}[Boundary characteristic triangles] Let $(x_0, t_0) \in \partial \Omega$ be any point. 
\begin{enumerate}
    \item [(i)] If $F(0, t_0)< G_{b_l}(0,t_0),$ then we define the boundary characteristic triangle as the area enclosed by the set $X^{bd}_l:=\left\{x\in [0, 1], 0\leq t \leq t_0 \,\, \Big|\,\, x \leq X_{b_l}(0, t_0, t)\right\}.$
    \item [(ii)] If $F(1, t_0)< G_{b_r}(1, t_0),$ then we define the boundary characteristic triangle as the area enclosed by the set $X^{bd}_r:=\left\{x\in [0, 1], 0\leq t \leq t_0 \,\,\Big|\,\, x \geq X_{b_l}(0, t_0, t)\right\}.$
    \item[(iii)] If $G_{b_r}(0,t_0)<G_{b_l}(0, t_0),$ then we define the boundary characteristic triangle as the area enclosed by the set $X^{l}_r:=\left\{x\in [0, 1], 0\leq t \leq t_0 \,\,\Big|\,\, x \leq X^{b_l}_{b_r}(1, t_0, t)\right\}.$
    \item [(iv)] If $G_{b_l}(1, t_0)< G_{b_l}(1, t_0,)$ then we define the boundary characteristic triangle as the area enclosed by the set $X^{r}_l:=\left\{x\in [0, 1], 0\leq t \leq t_0 \,\,\Big|\,\, x \leq X^{b_r}_{b_l}(1, t_0, t)\right\}.$
\end{enumerate}
Also, the boundary characteristic triangles associated to $(0, t_0)$ or $(1, t_0)$ and also denoted as $\Delta(x_0, t_0).$
\end{defn}
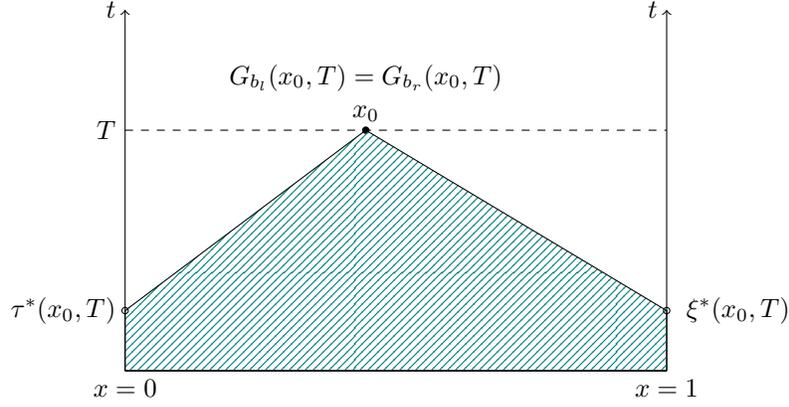
\begin{figure}[htb]
\centering
\begin{tikzpicture}[scale=0.8]
\draw[-] (0,0) -- (9,0) node[anchor=north] {$x=1$};
\draw[-] (9,0) -- (0,0) node[anchor=north] {$x=0$};
\draw[->] (0,0) -- (0,6) node[anchor=east] {$t$};
\draw[->] (9,0) -- (9,6) node[anchor=east] {$t$};
\draw (0,4) node[anchor=east]{$T$};
\draw[dashed] (0,4) -- (9,4);
\draw (2,0) node[anchor=north] {$ $}
(4,4.5) node[anchor=south] {$G_{b_l}(x_0, T)=G_{b_r}(x_0, T)$};
			
%
%
\draw (4,4) node[anchor=south] {$x_0$};
\draw	(4,4) circle[radius=1.5pt];
\fill (4,4) circle[radius=1.5pt];
\draw[pattern=north east lines, pattern color=teal] (0,1) to (0,0) to (9, 0) to (9,1) to (4, 4) to (0, 1);
\draw (0,1) node[anchor=east] {$\tau^*(x_0,T)$};
\draw	(0,1) circle[radius=1.5pt];
\draw (11.2,1) node[anchor=east] {$\xi^*(x_0,T)$};
\draw	(9,1) circle[radius=1.5pt];
%
\end{tikzpicture}
\vspace*{-.3cm}\caption{Figure of interior characteristic triangle (vi)}\label{bildtriangle_2}
\end{figure}

\begin{figure}[htb]
\centering
\begin{tikzpicture}[scale=0.8]
\draw[-] (0,0) -- (9,0) node[anchor=north] {$x=1$};
\draw[-] (9,0) -- (0,0) node[anchor=north] {$x=0$};
\draw[->] (0,0) -- (0,6) node[anchor=east] {$t$};
\draw[->] (9,0) -- (9,6) node[anchor=east] {$t$};
\draw (0,4) node[anchor=east]{$T$};
\draw[dashed] (0,4) -- (9,4);
\draw (2,0) node[anchor=north] {$ $}
		(2,4.5) node[anchor=south] {$F(0, T)<G_{b_l}(0, T)$}
		(7,4.5) node[anchor=south] {$F(1, T)<G_{b_r}(1, T)$};
\draw (0,4) node[anchor=south] {};
\draw	(0,4) circle[radius=1.5pt];
\fill (0,4) circle[radius=1.5pt];
\draw[pattern=north east lines, pattern color=brown] (0,4) to (0,0) to (3.5,0) to (0,4);
\draw (4,-0.5) node[anchor=east] {$y^*(0,T)$};
\draw	(3.5,0) circle[radius=1.5pt];
\draw (9,4) node[anchor=south] {};
\draw	(9,4) circle[radius=1.5pt];
\fill (9,4) circle[radius=1.5pt];
\draw[pattern=north west lines, pattern color=brown] (9,4) to (9, 0) to (5,0) to (9,4);
\draw (6,-0.5) node[anchor=east] {$y_*(1,T)$};
\draw	(5,0) circle[radius=1.5pt];
%
%
%
\end{tikzpicture}
\vspace*{-.3cm}\caption{Figures of boundary characteristic triangles (i)-(ii)}\label{bildtriangle_3}
\end{figure}
\begin{figure}[htb]
\centering
\begin{tikzpicture}[scale=0.8]
\draw[-] (0,0) -- (9,0) node[anchor=north] {$x=1$};
\draw[-] (9,0) -- (0,0) node[anchor=north] {$x=0$};
\draw[->] (0,0) -- (0,6) node[anchor=east] {$t$};
\draw[->] (9,0) -- (9,6) node[anchor=east] {$t$};
\draw (0,4) node[anchor=east]{$T$};
\draw[dashed] (0,4) -- (9,4);
\draw (2,0) node[anchor=north] {$ $}
			(2.3,4.5) node[anchor=south] {$G_{b_r}(0, T)<G_{b_l}(0, T)$};
\draw (0,4) node[anchor=south] {};
\draw	(0,4) circle[radius=1.5pt];
\fill (0,4) circle[radius=1.5pt];
\draw[pattern=north east lines, pattern color=brown] (0,4) to (0,0) to (9,0) to (9,1) to (0,4);
\draw (11, 1) node[anchor=east] {$\xi^*(0,T)$};
\draw	(9,1) circle[radius=1.5pt];
\end{tikzpicture}
\vspace*{-.3cm}\caption{Figure of boundary characteristic triangle (iii)}\label{bildtriangle_4}
\end{figure}
\begin{figure}[htb]
\centering
\begin{tikzpicture}[scale=0.8]
\draw[-] (0,0) -- (9,0) node[anchor=north] {$x=1$};
\draw[-] (9,0) -- (0,0) node[anchor=north] {$x=0$};
\draw[->] (0,0) -- (0,6) node[anchor=east] {$t$};
\draw[->] (9,0) -- (9,6) node[anchor=east] {$t$};
\draw (0,4) node[anchor=east]{$T$};
\draw[dashed] (0,4) -- (9,4);
\draw (2,0) node[anchor=north] {$ $}
			(6.7,4.5) node[anchor=south] {$G_{b_l}(1, T)<G_{b_r}(1, T)$};
%
\draw (9,4) node[anchor=south] {};
\draw	(9,4) circle[radius=1.5pt];
\fill (9,4) circle[radius=1.5pt];
\draw[pattern=north west lines, pattern color=brown] (0,1) to (0, 0) to (9,0) to (9,4) to (0,1);
\draw (-0.2,1) node[anchor=east] {$\tau^*(1,T)$};
\draw	(0,1) circle[radius=1.5pt];
%
%
%
\end{tikzpicture}
\vspace*{-.3cm}\caption{Figure of boundary characteristic triangle (iv)}\label{bildtriangle_4}
\end{figure}

In the following lemma, we collect some properties of the characteristic triangles. The proof is similar to \cite[Lemma 2.9-2.13]{NOSS22}.
\begin{lem}
Grant the assumptions on the initial and boundary data. Then, we have
\begin{enumerate}
    \item Let $t > 0$ be fixed, and  $x_1 \neq  x_2$ be arbitrary. Then the characteristic triangles
associated with $(x_1,t)$ and $(x_2,t)$ do not intersect in $[0,1] \times [0, \infty).$ Consequently, if two characteristic triangles intersect in $[0, 1]\times [0, \infty)$, then one is contained in the other.
\item For any time $t_0>0$, we have
\begin{align*}
    \bigcup_{x \in [0,1]}\Delta(x,t_0)=\left\{(x,t)\big|x \in [0,1],0 \leq t \leq t_0\right\}.
\end{align*}
\end{enumerate}
\end{lem}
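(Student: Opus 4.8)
The plan is to prove both parts of the lemma by leveraging the uniqueness-along-segments result of Lemma~\ref{unique} together with the monotonicity of minimizers from Lemma~\ref{lnew}. For part (1), I would argue by contradiction: suppose the characteristic triangles $\Delta(x_1,t)$ and $\Delta(x_2,t)$ with $x_1\neq x_2$ share an interior point $(x',t')$ but neither contains the other, so in particular the two triangles are not nested. Each characteristic triangle is, by its definition, the region swept out between a \emph{left} characteristic line and a \emph{right} characteristic line emanating from the apex $(x_i,t)$ down to the appropriate base (initial line $t=0$, left boundary $x=0$, or right boundary $x=1$), the choice dictated by which of $F, G_{b_l}, G_{b_r}$ is smallest at $(x_i,t)$. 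The key point is that each such bounding line is the segment joining the apex to the minimizing point ($(y_*,0)$ or $(y^*,0)$, $(0,\tau_*)$ or $(0,\tau^*)$, $(1,\xi_*)$ or $(1,\xi^*)$), and along that segment the relevant minimizer is \emph{unique} and \emph{constant} by Lemma~\ref{unique}. Hence if the two triangles overlap, one of the bounding segments of $\Delta(x_1,t)$ must cross the interior of $\Delta(x_2,t)$ (or vice versa); following that segment down to its base, the constancy of the minimizer forces the apex $(x_2,t)$ to lie between the two bounding lines of $\Delta(x_1,t)$, i.e.\ $(x_2,t)\in\Delta(x_1,t)$, and then by the same token the whole triangle $\Delta(x_2,t)$ is trapped inside $\Delta(x_1,t)$ --- contradicting non-nestedness. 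One must run this argument through the various cases (i)--(vi) of the interior-triangle definition and the boundary cases, but in each case the mechanism is identical: crossing bounding segments would contradict uniqueness of the minimizer on a segment.

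The cleanest way to organize part (1) is probably to introduce, for a fixed $t$, the two ``edge curves'' $t\mapsto$ (left edge) and $t\mapsto$ (right edge) as functions of the time coordinate on $[0,t]$, observe that for $x_1<x_2$ the right edge of $\Delta(x_1,t)$ and the left edge of $\Delta(x_2,t)$ are each segments to base points that are ordered (by the monotonicity statements $y^*(x_1,t)\le y_*(x_2,t)$, $\tau_*(x_1,t)\ge\tau^*(x_2,t)$, $\xi^*(x_1,t)\le\xi_*(x_2,t)$ in Lemma~\ref{lnew}), and then check that two segments with ordered endpoints at $t'=0$ (or at the lateral boundary) and ordered endpoints at $t'=t$ (namely $x_1<x_2$) cannot cross in the open strip unless they coincide, in which case uniqueness forces the triangles to be nested. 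The ``consequently'' clause is then immediate: two triangles that intersect but are associated to distinct apex points must, by what was just shown, be nested.

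For part (2), the inclusion $\supseteq$ is what requires work; the inclusion $\subseteq$ is trivial since every $\Delta(x,t_0)$ is contained in $[0,1]\times[0,t_0]$ by construction (all characteristic lines go downward from the apex to a base point with $t$-coordinate in $[0,t_0]$, and stay in $[0,1]$ because the apexes and base points do). For $\supseteq$, fix $(x,t)$ with $t\le t_0$; I would show it lies in $\Delta(\bar x, t_0)$ for some $\bar x$. The natural candidate is to follow the characteristic line through $(x,t)$ upward: by part (4) of Lemma~\ref{lnew} and the construction, the point $(x,t)$ itself has a characteristic triangle $\Delta(x,t)$ whose bounding lines, extended upward past time $t$, hit the line $t=t_0$; using Lemma~\ref{unique} the minimizer is constant along these extended segments, so the extended segments are exactly edges of the triangle of the point where they meet $t=t_0$. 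Taking $\bar x$ to be (say) the $x$-coordinate where the relevant extended edge meets $t=t_0$ --- or more robustly, using a connectedness/intermediate-value argument on the map $x\mapsto$ (position at time $t$ of the left edge of $\Delta(x,t_0)$), which is monotone and covers $[0,1]$ by the boundary normalizations in Lemma~\ref{lnew}(4) --- gives an apex $\bar x$ with $(x,t)\in\Delta(\bar x,t_0)$.

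\textbf{Main obstacle.} I expect the principal difficulty to be bookkeeping rather than conceptual: the characteristic triangle at a point can be of six interior types or four boundary types depending on which potential is minimal, and one must verify the non-intersection/nesting dichotomy for every pairing of types (interior--interior, interior--boundary, boundary--boundary), including the degenerate ``tie'' cases (iv)--(vi) where an edge is of mixed type. Controlling the case where $(x',t')$ lies \emph{on} the common boundary of two triangles (as opposed to the open interior) also needs the semicontinuity statement in Lemma~\ref{lnew}(5) to rule out pathological overlaps. The surjectivity direction of part (2) additionally needs the normalization $\tau_*(0,t_0)=\tau^*(0,t_0)=t_0$, $\xi_*(1,t_0)=\xi^*(1,t_0)=t_0$ and $y_*(x,0)=y^*(x,0)=x$ to guarantee the edge-position map genuinely sweeps all of $[0,1]$ at time $t$ and all of the base at time $0$; without it the covering could miss a sliver near the corners. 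Since the excerpt explicitly says the proof is similar to \cite[Lemma~2.9--2.13]{NOSS22}, I would present the argument by isolating the one genuinely new ingredient --- the right-boundary edges and their uniqueness, Lemma~\ref{unique}(3) and Lemma~\ref{lnew}(2) --- and referring to \cite{NOSS22} for the cases that do not involve $G_{b_r}$.
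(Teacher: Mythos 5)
The paper never writes this proof out (it defers to \cite[Lemmas 2.9--2.13]{NOSS22}), so you can only be judged against the intended route, and your toolkit -- the orderings and semicontinuity of Lemma~\ref{lnew}, uniqueness along segments from Lemma~\ref{unique}, and the normalizations $\tau_*(0,t)=t$, $\xi_*(1,t)=t$, $y_*(x,0)=x$ -- is indeed that route. Your handling of same-type pairs (ordered base points plus linearity of the two edges) and the monotone edge-map version of part (2) are consistent with it; note only that your first version of part (2), ``extend the edges upward past $t$'', is not justified by Lemma~\ref{unique}, which controls the minimizer only on the segment \emph{between} the point and its own minimizer, i.e.\ downward, so keep the intermediate-value formulation.

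The genuine gap is in part (1), exactly in the cases that are new in this paper: pairs of triangles of \emph{different} type involving $G_{b_r}$. If $\Delta(x_1,t)$ is of $G_{b_r}$ type and $\Delta(x_2,t)$, $x_2>x_1$, of $F$ type, the two bounding segments whose crossing you must exclude belong to different functionals, so ``uniqueness of the minimizer on a segment'' produces no contradiction: at the would-be crossing point each of $G_{b_r}(\cdot,x_0,t_0)$ and $F(\cdot,x_0,t_0)$ simply keeps its own unique minimizer, and the orderings you quote from Lemma~\ref{lnew} never compare a $\xi$-minimizer with a $y$- or $\tau$-minimizer. Moreover such a crossing is perfectly possible geometrically: a segment running down-right from $(x_1,t)$ to $(1,\xi^*(x_1,t))$ can cross a segment running down from $(x_2,t)$ to $(y_*(x_2,t),0)$. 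What actually rules it out is that the configuration ``$G_{b_r}$ strictly minimal at $x_1$, $F$ strictly minimal at $x_2>x_1$'' cannot occur at a fixed time; and this is \emph{not} a consequence of the monotonicity recorded after \eqref{rightBdpot}, since $F(\cdot,t)$ and $G_{b_r}(\cdot,t)$ are both decreasing. One needs the sharper fact that $x\mapsto G_{b_r}(x,t)-F(x,t)$ is non-increasing, e.g.\ via the one-sided derivatives $\partial_x G_{b_r}(x,t)=\int_0^{\xi_*}\rho_{b_r}u_{b_r}\,d\eta-\int_0^1\rho_0\,d\eta\le-\int_0^1\rho_0\,d\eta\le-\int_0^{y_*}\rho_0\,d\eta=\partial_x F(x,t)$, which is precisely where the $F(1,x,t)$-correction built into $G_{b_r}$ enters. (The reversed mixed configurations $F$/$G_{b_l}$ and $G_{b_r}$/$G_{b_l}$ are excluded by the opposite monotonicities of $G_{b_l}$ versus $F,G_{b_r}$, and the admissible mixed configurations are then dispatched by a strip-confinement argument: below a crossing the outer edge would have to leave $[0,1]$ before reaching its base point on $\{x=0\}$, $\{x=1\}$ or $\{t=0\}$.) Without an ingredient of this type your ``identical mechanism in each case'' plan cannot close the mixed cases, which are the ones not already covered by \cite{NOSS22}.
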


\begin{lem}\label{lem:curves}
Let $t_1>0$ be given. Each point $(x_1, t_1)$ uniquely determines a curve $x=X(t)$, for $t\geq t_1$, with $x_1= X(t_1)$ such that the characteristic triangles associated to points on the curve forms an increasing family of sets.
Furthermore, the curve
\begin{align*}
[t_1, \infty)\to [0, 1], \,\,\,\, t\mapsto X(t)
\end{align*}
is at least of class $C^0$ and for every $t\geq t_1$, and $(x,t)$ on the curve we have the following:
\begin{enumerate}[label=(\roman*)]
\item \label{it1} If $F(x,t)<\min\{G_{b_l}(x,t), G_{b_r}(x,t)\}$ then
\begin{equation*}\label{e2.7}
\lim_{t'', t'\searrow t}\frac{X(t'')-X(t')}{t''-t'}=
\left\{
\begin{aligned}
&\frac{x-y_*}{t} &&\text{if}&&y_*(x,t)=y^*(x,t),\,\\
&\displaystyle{\frac{\int_{y_{*}}^{y^{*}}\rho_0 u_0}{\int_{y_*}^{y^*}\rho_0} }&&\text{if}&&y_*(x,t)<y^*(x,t)\,.
\end{aligned}
\right.
\end{equation*}
\item \label{it2} If $G_{b_l}(x,t) < \min\{F(x,t), G_{b_r}(x,t)\}$ then
\begin{equation*}\label{e2.8}
\lim_{t'', t'\searrow t}\frac{X(t'')-X(t')}{t''-t'}=
\left\{
\begin{aligned}
&\frac{x}{t-\tau_*} &&\text{if}&&\tau_{*}(x,t)=\tau^{*}(x,t),\,\\
&\displaystyle{\frac{\int_{\tau_*}^{\tau^*}\rho_{b_l} u^2_{b_l}}{\int_{\tau_*}^{\tau^*}u_{b_l}\rho_{b_l}} }&&\text{if}&&\tau_{*}(x,t)<\tau^{*}(x,t)\,.
\end{aligned}
\right.
\end{equation*}
\item \label{it3} If $\{F(x,t)=G_{b_l}(x,t)\}$ and $y^*(x,t)\neq0$ or $\tau^*(x,t)\neq 0$ then
\begin{equation*}\label{e2.9}
\lim_{t'', t'\searrow t}\frac{X(t'')-X(t')}{t''-t'}=
\frac{\int_{0}^{y^*}\rho_0 u_{0}+\int_{0}^{\tau ^*}\rho_{b_l} u^2_{b_l}}{\int_{0}^{y^*}\rho_0+\int_{0}^{\tau^*}u_{b_l}\rho_{b_l}}\,.
\end{equation*}
\item \label{it4} If $F(x,t)=G_{b_l}(x,t)$ and $y^*(x,t)=\tau^*(x,t)=0$, then
\begin{equation*}\label{it4}
\lim_{t'', t'\searrow t}\frac{X(t'')-X(t')}{t''-t'}=\frac{x}{t}.
\end{equation*}
\item \label{it5} If $x=0$, $t>0$ and $\min\left\{F(0,t), G_{b_r}(0, t)\right\}<G_{b_l}(0,t)$, then
\begin{equation*}
\lim_{t'', t'\searrow t}\frac{X(t'')-X(t')}{t''-t'}=0.
\end{equation*}

\item \label{it6} If $G_{b_r}(x,t) < \min\{F(x,t), G_{b_l}(x,t)\}$ then
\begin{equation*}\label{newcase1}
\lim_{t'', t'\searrow t}\frac{X(t'')-X(t')}{t''-t'}=
\left\{
\begin{aligned}
&\frac{x-1}{t-\xi_*} &&\text{if}&&\xi_*(x,t)=\xi^*(x,t),\,\\
&\displaystyle{\frac{\int_{\xi_{*}}^{\xi^{*}}\rho_{b_r} u^2_{b_r}}{\int_{\xi_*}^{\xi^*}\rho_{b_r}u_{b_r}} }&&\text{if}&&\xi_*(x,t)<\xi^*(x,t)\,.
\end{aligned}
\right.
\end{equation*}
\item \label{it7} If $\{F(x,t)=G_{b_r}(x,t)\}$ and $y_*(x,t)\neq0$ or $\tau^*(x,t)\neq 0$ then
\begin{equation*}\label{newcase2}
\lim_{t'', t'\searrow t}\frac{X(t'')-X(t')}{t''-t'}=
\frac{\int_{0}^{y_*}\rho_0 u_{0}+\int_{0}^{\xi ^*}\rho_{b_r} u^2_{b_r}-\int_0^1 \rho_0 u_0}{\int_{0}^{y_*}\rho_0+\int_{0}^{\xi^*}u_{b_r}\rho_{b_r}-\int_0^1 \rho_0}\,.
\end{equation*}
\item \label{it8} If $\{G_{b_l}(x,t)=G_{b_r}(x,t)\},$ then
\begin{equation*}\label{newcase3}
\lim_{t'', t'\searrow t}\frac{X(t'')-X(t')}{t''-t'}=
\frac{\int_{0}^{\tau^*}\rho_{b_l} u^2_{b_l}-\int_{0}^{\xi ^*}\rho_{b_r} u^2_{b_r}+\int_0^1 \rho_0 u_0}{\int_{0}^{\tau^*}\rho_{b_l} u_{b_l}-\int_{0}^{\xi^*}u_{b_r}\rho_{b_r}+\int_0^1 \rho_0}\,.
\end{equation*}
\item \label{it9} If $F(x,t)=G_{b_r}(x,t)$ and $y^*(x,t)=1, \xi^*(x,t)=0$, then
\begin{equation*}
\lim_{t'', t'\searrow t}\frac{X(t'')-X(t')}{t''-t'}=\frac{x-1}{t}.
\end{equation*}
\item \label{it10} If $x=1$, $t>0$ and $\min\left\{F(1,t), G_{b_l}(1, t)\right\}<G_{b_r}(1,t)$, then
\begin{equation*}
\lim_{t'', t'\searrow t}\frac{X(t'')-X(t')}{t''-t'}=0.
\end{equation*}
\end{enumerate}
\end{lem}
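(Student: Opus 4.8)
The plan is to follow the scheme of \cite[Lemmas~2.9--2.13]{NOSS22}, enlarged to accommodate the extra families of characteristic triangles produced by the right boundary. \textbf{Step~1 (construction of the curve).} First I would define, for $t\ge t_1$, $X(t)$ to be the apex at time $t$ of the \emph{smallest} characteristic triangle containing $(x_1,t_1)$, with $X(t_1)=x_1$. This is well posed: the covering property recorded above provides at least one such triangle, and the nesting dichotomy (two characteristic triangles are disjoint or one contains the other) makes the set of triangles at time $t$ through $(x_1,t_1)$ totally ordered by inclusion, hence possessing a least element, while Lemma~\ref{unique} ensures that the minimizers along the segments joining a point to its base point are unique, so the construction is unambiguous. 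That $\{\Delta(X(t),t)\}_{t\ge t_1}$ is an increasing family, and that $X$ is the unique curve with this property, follows from the monotonicity of the minimizers in $t$ together with the comparison relations $\tau^*(x,t_1)\le\tau_*(x,t_2)$, $\xi^*(x,t_1)\le\xi_*(x,t_2)$, $y^*(x_1,t)\le y_*(x_2,t)$ in parts~(1)--(3) of Lemma~\ref{lnew}: these force the base interval of $\Delta(X(t'),t')$ inside that of $\Delta(X(t''),t'')$ whenever $t_1\le t'\le t''$, i.e.\ the inclusion $\Delta(X(t'),t')\subseteq\Delta(X(t''),t'')$.

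\textbf{Step~2 (continuity).} For the $C^0$ claim I would use that the two base points of $\Delta(X(t),t)$ are, depending on which of the six sub-cases of the triangle definition holds at $(X(t),t)$, among $y_*,y^*,\tau_*,\tau^*,\xi_*,\xi^*$, each semicontinuous in the apex by part~(5) of Lemma~\ref{lnew}. Combined with the monotone nesting in $t$ from Step~1, this shows that as $t'\searrow t$ the decreasing triangles $\Delta(X(t'),t')$ contract to the triangle $\Delta(x,t)$ at the curve point $(x,t)=(X(t),t)$, so their base points converge to those of $\Delta(x,t)$; likewise for $t'\nearrow t$ with an increasing family. Hence $X$ has one-sided limits everywhere, and the same squeeze rules out jumps, yielding $X\in C^0([t_1,\infty);[0,1])$.

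\textbf{Step~3 (the one-sided slope).} This is the substantive part. Fix a curve point $(x,t)=(X(t),t)$ and take $t<t'<t''$. In the interior case $F(x,t)<\min\{G_{b_l}(x,t),G_{b_r}(x,t)\}$ I would set $a=y_*(X(t''),t'')$ and $b=y^*(X(t'),t')$, so $a\le b$ by the nesting of Step~1. From the two minimality inequalities $F(a,X(t''),t'')\le F(b,X(t''),t'')$ and $F(b,X(t'),t')\le F(a,X(t'),t')$, adding them and evaluating the telescoping differences through $F(y,x,t)=t\int_0^y\rho_0u_0+\int_0^y(\eta-x)\rho_0$ produces $\frac{X(t'')-X(t')}{t''-t'}\le\frac{\int_a^b\rho_0u_0}{\int_a^b\rho_0}$, and the symmetric pair built from the rightmost minimizer $y^*(X(t''),t'')$ and the leftmost $y_*(X(t'),t')$ gives the matching lower bound. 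Letting $t',t''\searrow t$, Step~2 forces both integration intervals to $[y_*(x,t),y^*(x,t)]$, which yields the stated value (finite since $\rho_0>0$); when $y_*=y^*$ the interval degenerates and the slope is read from the single characteristic through $(y_*,0)$ and $(x,t)$, namely $(x-y_*)/t$. Every other case runs the same way: for $G_{b_l}<\min\{F,G_{b_r}\}$ (resp.\ $G_{b_r}<\min\{F,G_{b_l}\}$) one replaces $F$ by $G_{b_l}$ (resp.\ $G_{b_r}$), with the momentum influx $\rho_{b_l}u_{b_l}$ (resp.\ $\rho_{b_r}u_{b_r}$) in the role of mass and $\rho_{b_l}u_{b_l}^2$ (resp.\ $\rho_{b_r}u_{b_r}^2$) in the role of momentum flux, the sign hypotheses $u_{b_l}>0$ and $u_{b_r}<0$ keeping the denominators away from zero; in the mixed cases $F=G_{b_l}$, $F=G_{b_r}$, $G_{b_l}=G_{b_r}$ the telescoping contributions from two manifolds add up, the one new bookkeeping point being that the term $F(1,x,t)=\int_0^1(tu_0+\eta-x)\rho_0$ inside $G_{b_r}$ contributes $(t''-t')\int_0^1\rho_0u_0-(X(t'')-X(t'))\int_0^1\rho_0$ upon subtraction, which is exactly the source of the corrections $-\int_0^1\rho_0u_0$ and $-\int_0^1\rho_0$ in \ref{it7} and \ref{it8}; in the pinned cases \ref{it5} and \ref{it10} the apex stays on the boundary, so $X(t'')=X(t')$ near $t$ and the one-sided slope is $0$; and the endpoint-degenerate cases collapse to $x/t$, resp.\ $(x-1)/t$, by the same single-characteristic argument.

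\textbf{Main obstacle.} The hard part is Step~3: only semicontinuity of the extreme minimizers is available, so one must lean on the contracting-triangle argument of Step~2 to pin both base points of the two nested triangles to the endpoints of the limiting interval ($[y_*,y^*]$, $[\tau_*,\tau^*]$, or $[\xi_*,\xi^*]$), and then verify that after the telescoping the residual terms are precisely the mass-weighted average velocity with nothing left over --- in effect a discrete Rankine--Hugoniot identity at the minimizer. The remaining points --- identifying which extreme minimizer enters in each mixed configuration, tracking the $F(1,\cdot)$ contribution, and checking that no denominator degenerates --- are routine given the sign assumptions.
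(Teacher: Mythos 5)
Your proposal is correct and follows essentially the same route as the paper: the curve is obtained from the covering and non-intersection properties of the characteristic triangles (as in \cite{NOSS22}), and the one-sided slopes are derived from paired minimality inequalities for the relevant potentials, telescoped and squeezed between the two extreme minimizers, then passed to the limit using their monotonicity and semicontinuity. In particular, your identification of the $F(1,x,t)$ term inside $G_{b_r}$ as the source of the $-\int_0^1\rho_0 u_0$ and $-\int_0^1\rho_0$ corrections in \ref{it7}--\ref{it8}, and your sign-based check that the denominators do not vanish, match the paper's argument for the new cases \ref{it6}--\ref{it10}.
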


\begin{proof}
The statement of \ref{it1} corresponds to Lemma~2.4 in \cite{WHD97} and the proof can be found there.
The statements \ref{it1}-\ref{it5} are proved in \cite{NOSS22}. Following a similar strategy we prove \ref{it6}-\ref{it10}. 
For the proof of \ref{it6}, let $t''>t'>t$ and $X(t'')=x'', X(t')=x', X(t)=x.$
First consider the case when $\xi ^{*}(x,t)=\xi_{*}(x,t)$. It is straightforward to see the following inequality on inclinations:
\begin{equation}\label{e2.10}
\frac{x''-1}{t''-\xi^{*}(x'', t'')} \geq \frac{x''-x'}{t''-t'} \geq \frac{x''-1}{t''-\xi_{*}(x'', t'')}.
\end{equation}
Passing to the limit as $t'', t' \searrow t$ leads, using the semicontinuity and monotonicity, to the first identity of equation \ref{it6}.\\
Now we consider the case $\xi_{*}(x,t)<\xi ^{*}(x,t)$. From the definitions of the boundary functional $G_{b_r}(\xi, x, t)$, $\xi_*$ and $\xi^*$, we have
\begin{equation}\label{e2.11}
\begin{split}
G_{b_r}\big(\xi^{*}(x'', t''), x'', t''\big) - G_{b_r}\big(\xi_{*}(x', t'), x'', t''\big)\leq 0\leq G_{b_r}\big(\xi^{*}(x'', t''), x', t'\big)- G_{b_r}\big(\xi_{*}(x', t'),  x', t'\big).
\end{split}
\end{equation}
After simplification inequality \eqref{e2.11} yields
\begin{equation*}
\int\displaylimits_{\xi_{*}(x', t')} ^ {\xi^{*}(x'', t'')} \!\!\![x''-1-u_{b_r} (\eta)(t''-\eta)] \rho_{b_r} (\eta)u_{b_r}(\eta) d \eta \leq \int\displaylimits_{\xi_{*}(x', t')} ^ {\xi^{*}(x'', t'')} \!\!\![x'-1-u_{b_r} (\eta)(t'-\eta)] \rho_{b_r} (\eta)u_{b_r}(\eta) d \eta\,.
\end{equation*}
Thus we can conclude
\begin{equation}\label{e2.13}
\frac{x''- x'}{t''- t'} \leq
\frac{\int_{\xi_{*}(x', t')} ^ {\xi^{*}(x'', t'')}u^2_{b_r} (\eta)\rho_{b_r} (\eta)d \eta}{\int_{\xi_{*}(x', t')} ^ {\xi^{*}(x'', t'')}u_{b_r}(\eta)\rho_{b_r} (\eta)d \eta}\,.
\end{equation}
On the other hand, considering the inequality
\begin{equation*}
\begin{split}
G_{b_r}\big(\xi_{*}(x'', t''), x'', t''\big) -G_{b_r}\big(\xi^{*}(x', t'), x'', t''\big) \leq 0 \leq G_{b_r}\big(\xi_{*}(x'', t''), x', t'\big)- G_{b_r}\big(\xi^{*}(x', t'),x',t'\big),
\end{split}
\end{equation*}
we get
\begin{equation}\label{e2.14}
\frac{x''- x'}{t''- t'} \geq
\frac{\int_{\tau_{*}(x', t')} ^ {\tau^{*}(x'', t'')}u^2_{b_r} (\eta)\rho_{b_r} (\eta)d \eta}{\int_{\tau_{*}(x', t')} ^ {\tau^{*}(x'', t'')}u_{b_r}(\eta)\rho_{b_r} (\eta)d \eta}\,.
\end{equation}
Now passing to the limit as $t'', t'\searrow t$ in equations \eqref{e2.13}  and  \eqref{e2.14}, we proved the second identity of \ref{newcase1}.

Now to verify the statement of \ref{it7}, we assume $F(x,t)=G_{b_r}(x,t)$. Then by definition of the curve and the characteristic triangles, we find $(t, t+\varepsilon)$ such that the minima $F$ and $G_{b_r}$ are equal along the curve for some $t^{\prime \prime}, t^{\prime}\in (t. t+\varepsilon)$ with $t^{\prime \prime}>t^{\prime}$. Using the minimizing properties we have the following inequality.
\begin{align*}
&F\big(y_{*}({x'', t''}), x'', t''\big)-G_{b_r}\big(\xi^{*}(x',t') , x'', t''\big)
=G_{b_r}\big(\xi^{*}({x'', t''}), x'', t''\big)-G_{b_r}\big(\xi^{*}(x',t') , x'', t''\big)\leq 0 \\
&\leq F\big(y_{*}(x'', t''), x', t'\big)-F\big(y_ {*}(x',t'),  x', t'\big)
=F\big(y_{*}(x'', t''), x', t'\big)-G_{b_r}\big(\xi^ {*}(x',t'),  x', t'\big).
\end{align*}
This inequality implies
\begin{equation*}
\begin{split}
&\int_{0}^{y_{*}({x'', t''})}(t^{\prime \prime}-t^{\prime})u_0(\eta)\rho_0(\eta)+(x^{\prime}-x^{\prime \prime})\rho_0(\eta)d\eta + \int_0^1 (t^{\prime}-t^{\prime\prime})\rho_0(\eta) u_0(\eta)+(x^{\prime\prime}-x^{\prime})\rho_0(\eta)d\eta\\
&\leq \int_{0}^{\xi^{*}(x',t')}(x''-x')\rho_{b_r}(\eta)u_{b_r}(\eta)+ u_{b_r}^{2}(\eta)\rho_{b_r}(\eta)(t'-t'')d\eta
\end{split}
\end{equation*}
and a simplification leads to
\begin{equation}\label{e2.17}
\frac{\int_{0}^{y_{*}({x'', t''})}\rho_0 u_{0}+\int_{0} ^{\xi^{*}(x',t')}\rho_{b_r} u^2_{b_r}-\int_0^1 \rho_0 u_0}{\int_{0}^{y_{*}(x'', t'')}\rho_0+\int_{0}^{\xi^{*}(x',t')}u_{b_r}\rho_{b_r}-\int_0^1 \rho_0}\leq \frac{x''-x'}{t''-t'}\,.
\end{equation}
Now in the same way starting from the inequality
\begin{equation*}
\begin{split}
G_{b_r}\big(\xi^{*}(x'', t''), x'', t'' \big)-&F\big(y_{*}({x', t'}), x'', t''\big)\leq G_{b_r}\big(\xi^ {*}(x'', t''), x', t'\big)- F\big(y_{*}(x', t'), x', t'\big)
\end{split}
\end{equation*}
and simplifying as before we get
\begin{equation}\label{e2.18}
\frac{\int_{0}^{y_{*}({x'', t''})}\rho_0 u_{0}+\int_{0} ^{\xi^{*}(x',t')}\rho_{b_r} u^2_{b_r}-\int_0^1 \rho_0 u_0}{\int_{0}^{y_{*}({x'', t''})}\rho_0+\int_{0}^{\xi^{*}(x',t')}u_{b_r}\rho_{b_r}-\int_0^1 \rho_0}\geq\frac{x''-x'}{t''-t'}\,.
\end{equation}
Passing to the limit as $t'', t'\searrow t$  in \eqref{e2.17} and \eqref{e2.18} completes the proof of \ref{it7}.\\
Note that the denominator of the above inequality is never zero, since
$\int_0^{y^*(x,t)}\rho_0(\eta)d\eta-\int_0^1 \rho_0(\eta)d\eta \leq 0$ and $\int_0^{\xi^*(x,t)}u_{b_r}(\eta)\rho_{b_r}(\eta)d\eta <0.$

Following the similar arguments, we prove \ref{it8}. Consider the inequality
\begin{equation*}
\begin{aligned}
&G_{b_r}\left(\xi^*(x^{\prime \prime}, t^{\prime \prime}, x^{\prime}, t^{\prime})\right)-G_{b_r}\left(\xi^*(x^{\prime}, t^{\prime}), x^{\prime \prime}, t^{\prime, \prime}\right)=G_{b_l}(\tau^*(x^{\prime\prime}, t^{\prime\prime}), x^{\prime\prime}, t^{\prime\prime})-G_{b_r}(\xi^{*}(x^{\prime}, t^{\prime}), x^{\prime \prime}, t^{\prime \prime})\leq 0\\
&\leq G_{b_l}(\tau^*(x^{\prime\prime}, t^{\prime\prime}), x^{\prime}, t^{\prime})-G_{b_l}(\tau^{*}(x^{\prime}, t^{\prime}), x^{\prime}, t^{\prime})= G_{b_l}(\tau^*(x^{\prime\prime}, t^{\prime\prime}), x^{\prime}, t^{\prime})-G_{b_r}(\xi^{*}(x^{\prime}, t^{\prime}), x^{\prime}, t^{\prime}).
\end{aligned}
\end{equation*}
Simplifying the above inequality we find
\begin{equation*}
\begin{aligned}
&(x^{\prime\prime}-x^{\prime})\Big[\int_0^{\tau^*(x^{\prime \prime}, t^{\prime\prime})}u_{b_l}(\eta)\rho_{b_l}(\eta)d\eta-\int_0^{\xi^*(x^{\prime},t^{\prime})}u_{b_r}(\eta)\rho_{b_r}(\eta)d\eta+\int_0^1 \rho_0(\eta)d\eta\Big]\\
&\leq (t^{\prime\prime}-t^{\prime})\Big[\int_0^{\tau^*(x^{\prime \prime}, t^{\prime\prime})}u^2_{b_l}(\eta)\rho_{b_l}(\eta)d\eta-\int_0^{\xi^*(x^{\prime},t^{\prime})}u^2_{b_r}(\eta)\rho_{b_r}(\eta)d\eta+\int_0^1 \rho_0(\eta)u_0(\eta)d\eta\Big].
\end{aligned}
\end{equation*}
Therefore, we have
\begin{equation}\label{new equ2.17}
\frac{x^{\prime\prime}-x^{\prime}}{t^{\prime \prime}-t^{\prime}} \leq \frac{\int_0^{\tau^*(x^{\prime \prime}, t^{\prime\prime})}u^2_{b_l}(\eta)\rho_{b_l}(\eta)d\eta-\int_0^{\xi^*(x^{\prime},t^{\prime})}u^2_{b_r}(\eta)\rho_{b_r}(\eta)d\eta+\int_0^1 \rho_0(\eta)u_0(\eta)d\eta}{\int_0^{\tau^*(x^{\prime \prime}, t^{\prime\prime})}u_{b_l}(\eta)\rho_{b_l}(\eta)d\eta-\int_0^{\xi^*(x^{\prime},t^{\prime})}u_{b_r}(\eta)\rho_{b_r}(\eta)d\eta+\int_0^1 \rho_0(\eta)d\eta}.
\end{equation}
Similarly considering
\begin{equation*}
G_{b_l}(\tau^*(x^{\prime},t^{\prime}),x^{\prime}, t^{\prime})-G_{b_r}(\xi^*(x^{\prime\prime},t^{\prime\prime}), x^{\prime}, t^{\prime}) \leq G_{b_l}(\tau^*(x^{\prime},t^{\prime}), x^{\prime\prime},t^{\prime\prime})-G_{b_r}(\xi^*(x^{\prime\prime}, t^{\prime\prime}),x^{\prime\prime}, t^{\prime\prime}),
\end{equation*}
we get
\begin{equation}\label{new equ2.18}
\frac{x^{\prime\prime}-x^{\prime}}{t^{\prime \prime}-t^{\prime}} \geq \frac{\int_0^{\tau^*(x^{\prime \prime}, t^{\prime\prime})}u^2_{b_r}(\eta)\rho_{b_r}(\eta)d\eta-\int_0^{\xi^*(x^{\prime},t^{\prime})}u^2_{b_r}(\eta)\rho_{b_r}(\eta)d\eta+\int_0^1 \rho_0(\eta)u_0(\eta)d\eta}{\int_0^{\tau^*(x^{\prime \prime}, t^{\prime\prime})}u_{b_l}(\eta)\rho_{b_l}(\eta)d\eta-\int_0^{\xi^*(x^{\prime},t^{\prime})}u_{b_r}(\eta)\rho_{b_r}(\eta)d\eta+\int_0^1 \rho_0(\eta)d\eta}.
\end{equation}
In the inequalities \eqref{new equ2.17} and \eqref{new equ2.18}, we observe that the denominator can not be zero, since
\[\int_0^{\tau^*(x,t)}u_{b_l}(\eta)\rho_{b_l}(\eta)d\eta-\int_0^{\xi^*(x,t)}u_{b_r}(\eta)\rho_{b_r}(\eta)d\eta \geq 0\,\,\, \text{and }\,\,\, \int_0^1 \rho_0(\eta)d\eta >0.\] The proof of \ref{it9} and \ref{it10} is similar to the proof of \ref{it4}, \ref{it5} and follows from the definition of characteristic triangles.  This completes the proof.
\end{proof}
\begin{rem}
In fact, the curve $X(t)$ is Lipschitz continuous. Indeed, \eqref{e2.10}, \eqref{e2.13}-\eqref{new equ2.18} shows that for every $t^{\prime\prime}>t^{\prime},$ we have $|X(t^{\prime\prime})-X(t^{\prime})|\leq L|t^{\prime\prime}-t^{\prime}|$.
\end{rem}
\begin{rem}\label{rem2.9}
We comment on the regions where $F(x, t)=G_{b_r}(x, t)$ and $G_{b_l}(x, t)=G_{b_r}(x, t).$ By \cite[Remark 2.2]{NOSS22}, we have that if $\min_{\tau \geq 0}G_{b_l}(\tau, x, t)$ is constant on an interval $[x_1, x_2]\times \{t=T\},$ then $G_{b_l}(x, T)=\min_{\tau \geq 0} G_{b_l}(\tau, x, T)=0$ for all $x \in [x_1, x_2].$ Define the sets: 
\begin{align*}
    I_1(T):=\left\{x \,\,\big|\,\, G_{b_l}(x, T)=G_{b_r}(x, T)\right\},\,\,\,I_2(T):=\left\{x \,\,\big|\,\, F(x, T)=G_{b_r}(x, T)\right\}.
\end{align*}
Now we recall that for any $t>0,$ $G_{b_l}(x, t)$ increases with $x$ while $G_{b_r}(x, t)$ decreases with $x.$ Thus, $G_{b_l}(x, t)=G_{b_r}(x, t)$ implies both are constant and for all $x \in I_1(T),$ $G_{b_r}(x, T)=0.$ 

\noindent \textit{Claim 1:} $I_1(T):=\{x\}.$ Suppose $x_1, x_2 \in I_1(T)$ with $x_1<x_2.$ From \cite[Remark 2.2]{NOSS22}, it follows that $\tau^*(x_1, T)=\tau_*(x_1, T)=\tau^*(x_2, T)=\tau_*(x_2, T)=0$ and $\xi^*(x_1, T), \xi_*(x_1, T), \xi^*(x_2, T), \xi_*(x_2, T)\in [0, T]$ with $\xi^*(x_1, T)<\xi^*(x_2, T).$ Then the line segment joining $(x_2, T)$ to $(0, 0)$ intersects the line segment joining $(x_1, T)$ to $(1, \xi^*(x_1, T))$ at some point $(x_p, t)$ which is a contradiction to the Lemma \ref{unique}. Hence $I_1(T)=\{x\}.$ 

\noindent\textit{Claim 2:} For $x \in I_2(T),$ $y^*(x, T)=y_*(x, T)=1$ and $\xi^*(x, T)=\xi_*(x, T)=0.$ Assume otherwise that there exists a point $x_1$ such that $y_*(x_1, T)<1.$ Then we can find a nearby point $x_2$ such that the line segment joining $(x_2, T)$ to $(1, \xi_*(x_2, T))$ intersects the line segment joining $(x_1, T)$ to $(y_*(x_1, T), 0),$ again contradicting the Lemma \ref{unique}.
\end{rem}
Now we are in a position to define $u(x,t)$ and $m(x,t)$ which serve as candidates for the solution to the system \eqref{e1.1}.
\begin{defn}\label{d2}
For $x,t>0$ we define the real-valued function $u(x,t)$ by
\begin{equation*}
u(x,t)=\left\{
\begin{aligned}
&\frac{x-y(x,t)}{t}&&\begin{aligned}&\text{if } F(x,t)<\min\{G_{b_l}(x,t), G_{b_r}(x,t)\} \\
&\text{ and } y_{*}(x,t)=y^{*}(x,t),\end{aligned}\\
&\dfrac{\int_{y_{*}(x,t)}^{y^{*}(x,t)} \rho_0 u_0 }{\int_{y_{*}(x,t)}^ {y^{*}(x,t)} \rho_0} &&\begin{aligned}&\text{if } F(x,t)< \min\{G_{b_l}(x,t),G_{b_r}(x,t)\}\\
&\text{ and }y_{*}(x,t)<y^{*}(x,t),\end{aligned}\\
&\frac{x}{t-\tau(x,t)}&&\begin{aligned}&\text{if } G_{b_l}(x,t) <\min\{F(x,t), G_{b_r}(x,t)\}\\ &\text{ and }\tau_{*}(x,t)=\tau^{*}(x,t),\end{aligned}\\
&\dfrac {\int_{\tau_{*}(x,t)}^{\tau^{*}(x,t)} \rho_b u^2_{b_l}}{\int_{\tau_{*}(x,t)}^ {\tau^{*}(x,t)} \rho_{b_l} u_{b_l}} &&\begin{aligned}&\text{if } G_{b_l}(x,t)<\min\{F(x,t), G_{b_r}(x,t)\}\\
&\text{ and }\tau_{*}(x,t)<\tau^{*}(x,t),\end{aligned}\\
&\frac{x-1}{t-\xi(x,t)}&&\begin{aligned}&\text{if } G_{b_r}(x,t) <\min\{F(x,t), G_{b_l}(x,t)\}\\
&\text{ and }\xi_{*}(x,t)=\xi^{*}(x,t),\end{aligned}\\
&\dfrac {\int_{\xi_{*}(x,t)}^{\xi^{*}(x,t)} \rho_{b_r} u^2_{b_r}}{\int_{\xi_{*}(x,t)}^ {\xi^{*}(x,t)} \rho_{b_r} u_{b_r}} &&\begin{aligned}&\text{if } G_{b_r}(x,t)<\min\{F(x,t), G_{b_l}(x,t)\}\\ &\text{ and }\xi_{*}(x,t)<\xi^{*}(x,t),\end{aligned}\\
&\frac{\int_{0} ^ {\tau^{*}(x,t)}\rho_{b_l} u^2_{b_l}+\int_{0} ^ {y^{*}(x,t)}\rho_0 u_0}{\int_{0} ^ {\tau^{*}(x,t)}\rho_{b_l} u_{b_l} +\int_{0} ^ {y^{*}(x,t)}\rho_0} &&\text{if } F(x,t)=G_{b_l}(x,t)\text{ and  }\begin{aligned} &y^*(x,t)\neq0\text{ or}\\ &\tau^*(x,t)\neq0,\end{aligned}\\
&\frac{x}{t}&&\text{if } F(x,t)=G_{b_l}(x,t) \text{ and }\begin{aligned} &y^*(x,t)=0 \text{ and}\\ &\tau^*(x,t)=0\,,\end{aligned}\\
&\frac{\int_{0} ^ {\xi^{*}(x,t)}\rho_{b_r} u^2_{b_r}+\int_{0} ^ {y_{*}(x,t)}\rho_0 u_0-\int_0^1 \rho_0u_0}{\int_{0} ^ {\xi^{*}(x,t)}\rho_{b_r} u_{b_r} +\int_{0} ^ {y_{*}(x,t)}\rho_0-\int_0^1 \rho_0} &&\text{if } F(x,t)=G_{b_r}(x,t)\text{ and  }\begin{aligned} &y_*(x,t)\neq1\text{ or}\\ &\xi^*(x,t)\neq 0,\end{aligned}\\
&\frac{x-1}{t}&&\text{if } F(x,t)=G_{b_r}(x,t) \text{ and }\begin{aligned} &y_*(x,t)=1 \text{ and}\\ &\xi^*(x,t)=0\,,\end{aligned}\\
&\frac{\int_{0} ^ {\tau^{*}(x,t)}\rho_{b_l} u^2_{b_l}-\int_{0} ^ {\xi^{*}(x,t)}\rho_{b_r} u^2_{b_r}+\int_0^1 \rho_0u_0}{\int_{0} ^ {\tau^{*}(x,t)}\rho_{b_l} u_{b_l} -\int_{0} ^ {\xi^{*}(x,t)}\rho_{b_r} u_{b_r}+\int_0^1 \rho_0} &&\text{if } G_{b_l}(x,t)=G_{b_r}(x,t).
\end{aligned}
\right.
\end{equation*}
For $x=0$ and $t>0,$ we define $u=u_{b_l}$ if $\min\left\{F(0,t), G_{b_r}(0, t)\right\}> G_{b_l}(0,t)$ and $u=0$ if $\min\left\{F(0,t), G_{b_r}(0, t)\right\}$ $<G_{b_l}(0,t)$. For $F(0,t)=G_{b_l}(0,t)$ or $G_{b_l}(0, t)=G_{b_r}(0,t),$ we use the same definition as for $x>0$. Similarly, for $x=1, t>0,$ we define $u=u_{b_r}$ if $\min\left\{F(1, t), G_{b_l}(1, t)\right\}>G_{b_r}(1,t)$ and $u=0$ if $\min\left\{F(1,t), G_{b_l}(1, t)\right\}<G_{b_r}(1,t).$ For $F(1,t)=G_{b_r}(1,t)$ or $G_{b_l}(1, t)=G_{b_r}(1, t),$ we use the same definition as $0<x<1.$
\end{defn}
\begin{defn}\label{d3}
For $t>0$ and $0\leq x\leq 1,$ we define the real valued function $m(x,t)$ by
\begin{equation*}
m(x,t)=\left\{
\begin{aligned}
&\int_{0}^{y_*(x,t)}\rho_0(\eta)\,d\eta&&\begin{aligned}&\text{if }  F(x,t) \leq \min\{G_{b_l}(x,t), G_{b_r}(x,t)\} \\
&\text{ and }0<x<1,\end{aligned}\\
-&\int_{0}^{\tau_*(x,t)}\rho_{b_l}(\eta)u_{b_l}(\eta)\,d\eta&&\begin{aligned}&\text{if }G_{b_l}(x,t)\leq \min\{F(x,t), G_{b_r}(x,t)\}\ \\
&\text{ or }x=0\,,\end{aligned}\\
-&\int_{0}^{\xi_*(x,t)}\rho_{b_r}(\eta)u_{b_r}(\eta)\,d\eta+\int_0^1 \rho_0(\eta)\,d\eta&&\begin{aligned}&\text{if }G_{b_r}(x,t)< \min\{F(x,t), G_{b_l}(x,t)\}\ \\
&\text{ or }x=1\,.\end{aligned}\\
\end{aligned}
\right.
\end{equation*}
\end{defn}
\section{Proof of Theorem \ref{TH1.3}}\label{sec:3} This section aims to prove Theorem \ref{TH1.3}. The proof is divided into two parts: the first establishes the weak formulation (as per Definition \ref{intro-defn-weak formulation}), and the second verifies the initial-boundary conditions and the entropy inequality along the discontinuity of $u(\cdot, t).$
\subsection{Verification of the weak formulation}In this subsection, we prove that the curves defined in Section~\ref{sec:2} form a weak solution of system \eqref{e1.1}. We show that these curves uniquely reach almost all points on the initial and boundary set. Additionally, we show that the minimum of the initial and boundary potentials serves as the space and time anti-derivative of the cumulative distribution of mass and momentum. We begin showing that the constructed curve $X(t)$ can indeed be started from $t=0.$ Since the proof can be completed by closely following the arguments of \cite[Lemma 3.1]{NOSS22}, we omit it here.
\begin{lem}\label{lem:contcurves}
The following holds for the curves defined in the lemma~\ref{lem:curves}.
\begin{enumerate}[label=(\roman*)]
\item \label{defcurvesX}For all but countably many points $(\eta,0)$ on the $x$-axis we have a unique curve $x=X(\eta,t)$, $t\geq0$, such that $X(\eta,0)=\eta$.
\item \label{defcurvesY}For all but countably many points $(0,\eta)$ on the $t$-axis we have a unique curve $x=Y(\eta,t)$, $t\geq\eta$, such that $Y(\eta,\eta)=0$.
\item \label{defcurvesZ}For all but countably many points $(1,\eta)$ on the $t$-axis we have a unique curve $x=Z(\eta,t)$, $t\geq\eta$, such that $Z(\eta,\eta)=1$.
\end{enumerate}
In the countably many exception points define $X(\eta,t)=X(\eta-,t),$ $Y(\eta,t)=Y(\eta-,t)$ and $Z(\eta-, t)=Z(\eta, t)$. Then we have for all $\eta>0$
\begin{equation*}
\begin{split}
\tfrac{\partial}{\partial t}X(\eta, t)&= u(X(\eta, t), t) \quad \textnormal{ for almost all $t>0,$}\\
\tfrac{\partial}{\partial t}Y(\eta, t)&= u(Y(\eta, t), t)  \quad\textnormal{for  almost all $t>\eta,$}\\
\tfrac{\partial}{\partial t}Z(\eta, t)&= u(Z(\eta, t), t)  \quad\textnormal{for  almost all $t>\eta,$}
\end{split}
\end{equation*}
where the right hand side is a measurable function.
\end{lem}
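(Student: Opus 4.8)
\emph{Strategy.} The plan is to realise the three families of curves as ``footpoint maps'' of the characteristic triangles, to push these maps down to the initial and boundary axes by a short perturbation argument, and finally to read the ODE off the one-sided derivative formulas of Lemma~\ref{lem:curves}. Fix $\eta\in(0,1)$. For each $t>0$ the covering property of the characteristic triangles gives a triangle $\Delta(x,t)$ containing $(\eta,0)$, and since triangles attached to distinct points at the same time do not intersect, such an $x$ is unique unless $(\eta,0)$ is a common boundary point of two of them; when it is unique set $X(\eta,t)=x$. If $s<t$ then $\Delta(X(\eta,s),s)$ and $\Delta(X(\eta,t),t)$ both contain $(\eta,0)$, so one contains the other, and — the first having the strictly lower apex — $\Delta(X(\eta,s),s)\subseteq\Delta(X(\eta,t),t)$. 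Hence the triangles along $t\mapsto X(\eta,t)$ increase, so this map is the curve of Lemma~\ref{lem:curves} through each of its points and inherits its continuity and (by the remark following that lemma) its Lipschitz constant. The maps $Y(\eta,\cdot)$, $Z(\eta,\cdot)$ are defined the same way from $(0,\eta)$, $(1,\eta)$, and only for $t\ge\eta$ since every $\Delta(x,t)\subseteq\{0\le s\le t\}$; when $\min\{F(0,t),G_{b_r}(0,t)\}<G_{b_l}(0,t)$ the curve through $(0,t)$ is constant $\equiv0$ by Lemma~\ref{lem:curves}\ref{it5} and symmetrically $Z(\eta,\cdot)\equiv1$ by \ref{it10} (these stay on the boundary).

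\emph{Reaching the axes.} For $t$ near $0$ the integrand $(tu_0(\cdot)+\cdot-x)\rho_0(\cdot)$ defining $F(\cdot,x,t)$ differs from its value at $t=0$ — which, since $\rho_0>0$, is negative for $y<x$ and positive for $y>x$ — by a term bounded by $t\|u_0\|_\infty\rho_0$, so every minimizer of $y\mapsto F(y,x,t)$ lies in $[x-t\|u_0\|_\infty,\,x+t\|u_0\|_\infty]$. Thus the base interval $[y_*(X(\eta,t),t),y^*(X(\eta,t),t)]$, which contains $\eta$, collapses to $\{\eta\}$ and $X(\eta,t)\to\eta$ as $t\to0+$, so $X(\eta,\cdot)\in C^0([0,\infty))$ with $X(\eta,0)=\eta$. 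The same estimate for $G_{b_l}$ at times near $\eta$ (using $\rho_{b_l}u_{b_l}>0$ and $\tau_*(0,\eta)=\tau^*(0,\eta)=\eta$ from Lemma~\ref{lnew}(4)) and for $G_{b_r}$ near $\eta$ (using $\rho_{b_r}u_{b_r}<0$ and $\xi_*(1,\eta)=\xi^*(1,\eta)=\eta$) yields $Y(\eta,\eta)=0$, $Z(\eta,\eta)=1$, and continuity of $Y(\eta,\cdot)$, $Z(\eta,\cdot)$ on $[\eta,\infty)$.

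\emph{Uniqueness off a countable set.} Suppose two curves of Lemma~\ref{lem:curves} both issue from $(\eta,0)$ but occupy distinct positions $x_1<x_2$ at some $t>0$. Each remains, for $s\le t$, inside the characteristic triangle of its position at time $t$ (by the nesting just established), so $(\eta,0)\in\Delta(x_1,t)\cap\Delta(x_2,t)$; as these are attached to distinct points at time $t$ they do not intersect, hence $(\eta,0)$ is a common boundary point, which by Lemma~\ref{lnew}(3) forces $y_*(x,t)=y^*(x,t)=\eta$ for all $x\in[x_1,x_2]$ — a rarefaction fan issues from $(\eta,0)$. Since characteristic triangles do not cross, such a fan is exactly the region swept by the characteristics through $(\eta,0)$; fans from distinct points are disjoint, and each has nonempty two-dimensional interior near its vertex, so choosing a rational point in each embeds the set of exceptional $\eta$ into $\mathbb{Q}^2$ and that set is countable. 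One argues identically on the $t$-axes, using Lemma~\ref{lnew}(1) for $\tau_*,\tau^*$ and Lemma~\ref{lnew}(2) for $\xi_*,\xi^*$; by Remark~\ref{rem2.9} the transition loci $\{F=G_{b_r}\}$ and $\{G_{b_l}=G_{b_r}\}$ are single curves and add no exceptional points. At an exceptional $\eta$ set $X(\eta,\cdot)=X(\eta-,\cdot)$ (the limit exists by monotonicity in $\eta$), and similarly for $Y$, $Z$; this left limit is again a curve of Lemma~\ref{lem:curves}, so all the conclusions below apply to it as well.

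\emph{The ODE, measurability, and the main difficulty.} Each of $X(\eta,\cdot)$, $Y(\eta,\cdot)$, $Z(\eta,\cdot)$ is Lipschitz, so differentiable at a.e.\ $t$, and there its derivative equals the one-sided limit furnished by Lemma~\ref{lem:curves}. Comparing cases (i)--(x) of that lemma with Definition~\ref{d2} shows that this value is in every case exactly $u$ evaluated at the point of the curve: e.g.\ case \ref{it6} gives $\frac{\int_{\xi_*}^{\xi^*}\rho_{b_r}u_{b_r}^2}{\int_{\xi_*}^{\xi^*}\rho_{b_r}u_{b_r}}$ (or $\frac{x-1}{t-\xi}$), which is the fifth/sixth branch of Definition~\ref{d2}; case \ref{it8} gives the last branch; and in case \ref{it5} the value $0$ is exactly the prescribed $u(0,t)=0$. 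Hence $\partial_t X(\eta,t)=u(X(\eta,t),t)$ for a.e.\ $t>0$, and likewise for $Y$, $Z$ on $(\eta,\infty)$; the right-hand side agrees a.e.\ with the derivative of a Lipschitz function, hence is measurable. The genuinely delicate step is the counting one: it requires combining \emph{all} of the monotonicity and semicontinuity assertions of Lemma~\ref{lnew} for the six minimizer families with the non-intersection and nesting of characteristic triangles, and verifying via Remark~\ref{rem2.9} that the three regimes new to the strip — $G_{b_r}<\min\{F,G_{b_l}\}$, $F=G_{b_r}$, $G_{b_l}=G_{b_r}$ — create no further branch points.
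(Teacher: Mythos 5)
The paper offers no proof of this lemma at all---it is explicitly omitted with a pointer to \cite{NOSS22}---and your proposal reconstructs exactly the intended route: define $X(\eta,\cdot)$, $Y(\eta,\cdot)$, $Z(\eta,\cdot)$ through the nested characteristic triangles and the covering property, identify the non-uniqueness points with rarefaction fans of nonempty interior (via Lemma~\ref{unique} and the monotonicity in Lemma~\ref{lnew}) to get countability, and obtain the a.e.\ ODE from Lipschitz continuity together with the case-by-case derivative formulas of Lemma~\ref{lem:curves} matched against Definition~\ref{d2}; this is correct in substance and essentially the same approach. Two points merit a line more of care but are routine to patch: in the ``reaching the axes'' step you implicitly assume the triangle containing $(\eta,0)$ has base $[y_*,y^*]$, i.e.\ that $F$ is the minimal potential there for small $t$ (the mixed cases $F=G_{b_l}$, $F=G_{b_r}$, $G_{b_l}=G_{b_r}$ have bases $[0,y^*]$, $[y_*,1]$ or all of $[0,1]$ and must first be excluded near the $x$-axis by the sign estimates you already use, and analogously near $(0,\eta)$ and $(1,\eta)$), and the assertion that the left-limit curve at an exceptional $\eta$ is again a curve of Lemma~\ref{lem:curves} is stated rather than argued (it follows from monotonicity of $\eta\mapsto X(\eta,t)$ and non-crossing).
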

Next we define the momentum $q(x, t)$ and the kinetic energy $E(x, t)$ for different situations.
\begin{defn}\label{def:qE}
Now we define, for $x,t>0$ the momentum and the kinetic energy associated to the system \eqref{e1.1} by
\begin{equation*}\label{e3.1}
q(x,t)=\left\{
 \begin{aligned}
&\int_{0}^{y_*}\rho_0(\eta)u_0(\eta)d\eta, &\text{if } F(x,t)\leq \min\{G_{b_l}(x,t), G_{b_r}(x,t)\},\\
-&\int_{0}^{\tau_*}\rho_{b_l}(\eta)u^2_{b_l} (\eta)d\eta\,,&\text{if } G_{b_l}(x,t)\leq \min\{F(x,t),G_{b_r}(x,t)\}\,,\\
-&\int_{0}^{\xi_*}\rho_{b_r}(\eta)u^2_{b_r} (\eta)d\eta+\int_0^1\rho_0(\eta)u_0(\eta)d\eta\,,&\text{if } G_{b_r}(x,t)< \min\{F(x,t), G_{b_l}(x,t)\}\,.\\
\end{aligned}\right.
\end{equation*}
and
\begin{equation*}\label{e3.2}
E(x,t)=\left\{
\begin{aligned}
&\tfrac{1}{2}\int_{0}^{y_*}\rho_0(\eta)u_0(\eta)u(X(\eta,t),t)d\eta\,,&\text{if }F(x,t)\leq \min\{G_{b_l}(x,t), G_{b_r}(x,t)\},\\
-&\tfrac{1}{2}\int_{0}^{\tau_*}\rho_{b_l}(\eta)u^2_{b_l} (\eta)u(Y(\eta,t),t)d\eta\,, &\text{if }G_{b_l}(x,t)\leq \min\{F(x,t),G_{b_r}(x,t)\}\,,\\
-&\tfrac{1}{2}\int_{0}^{\xi_*}\rho_{b_r}(\eta)u^2_{b_r} (\eta)u(Z(\eta,t),t)d\eta\\
+&\frac{1}{2} \int_0^1 \rho_0(\eta)u_0(\eta)u(X(\eta,t),t)d\eta\,, &\text{if }G_{b_r}(x,t)< \min\{F(x,t), G_{b_l}(x,t)\}\,.\\
\end{aligned}\right.
\end{equation*}
\end{defn}
\begin{lem}\label{l51}
In the sense of Radon-Nikodym derivatives in $x$, the following holds in the interior of $\Omega$:
(i) $dq=udm$,
(ii) $dE=\frac{1}{2}u^2dm$.
\end{lem}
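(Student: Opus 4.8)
The plan is to fix $t>0$ and prove both identities as equalities of (signed) Borel measures in $x$ on $(0,1)$; here $m(\cdot,t)$, $q(\cdot,t)$, $E(\cdot,t)$ are of bounded variation in $x$ (bounded compositions of the monotone minimizer functions with Lipschitz primitives), so $dm$, $dq$, $dE$ make sense. The first step is to decompose $[0,1]$, using the monotonicity recorded after \eqref{rightBdpot} and Remark~\ref{rem2.9}, into the three (possibly empty) subintervals $\mathcal L=\{G_{b_l}<\min(F,G_{b_r})\}$, $\mathcal I=\{F<\min(G_{b_l},G_{b_r})\}$, $\mathcal R=\{G_{b_r}<\min(F,G_{b_l})\}$ and the closed interface set $\mathcal N$ where two of the potentials agree. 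On a subinterval of $\mathcal N$ of positive length the relevant potential is constant, so (by the argument behind \cite[Remark~2.2]{NOSS22} and Remark~\ref{rem2.9}) the associated minimizers, hence $m$, $q$, $E$, are constant there and contribute nothing to the three measures; by Lemma~\ref{unique} and Remark~\ref{rem2.9}, the rest of $\mathcal N$ is at most countable. It therefore suffices to verify the identities separately on each of $\mathcal L$, $\mathcal I$, $\mathcal R$ and at each of those countably many interface points.

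On the interior block $\mathcal I$ (the blocks $\mathcal L$, $\mathcal R$ being entirely parallel, with $\tau_*,Y,-\rho_{b_l}u_{b_l}$, respectively $\xi_*,Z,-\rho_{b_r}u_{b_r}$, replacing $y_*,X,\rho_0$), the key reduction is that the restrictions of $dm$, $dq$, $dE$ are the images, under the Lagrangian map $\eta\mapsto X(\eta,t)$, of the explicit measures $\rho_0(\eta)\,d\eta$, $\rho_0(\eta)u_0(\eta)\,d\eta$ and $\tfrac12\rho_0(\eta)u_0(\eta)\,u(X(\eta,t),t)\,d\eta$ on $\{\eta\colon X(\eta,t)\in\mathcal I\}$; this follows from the formulas $m(x,t)=\int_0^{y_*}\rho_0$, $q(x,t)=\int_0^{y_*}\rho_0u_0$, $E(x,t)=\tfrac12\int_0^{y_*}\rho_0u_0\,u(X(\cdot,t),t)$ in Definitions~\ref{d3}, \ref{def:qE} together with the monotonicity and left-continuity of $y_*(\cdot,t)$ (Lemma~\ref{lnew}). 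Thus $dq=u\,dm$ and $dE=\tfrac12u^2dm$ on $\mathcal I$ reduce to the integrand identities $\rho_0u_0=u(X(\eta,t),t)\rho_0$ and $\tfrac12\rho_0u_0\,u(X(\eta,t),t)=\tfrac12u(X(\eta,t),t)^2\rho_0$, i.e.\ to $u(X(\eta,t),t)=u_0(\eta)$, in a suitably integrated sense. I would split $[0,1]$ in the $\eta$-variable into the (at most countably many, pairwise disjoint by Lemma~\ref{lnew}(3)) shock intervals $I_j=[y_*(x_j,t),y^*(x_j,t)]$ — each of which is collapsed by $\eta\mapsto X(\eta,t)$ to the single point $x_j$ by Lemma~\ref{unique}(1), so that $u(X(\eta,t),t)\equiv u(x_j,t)$ there — and its complement $R$. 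On each $I_j$ the identity $u(X(\eta,t),t)=u_0(\eta)$ fails pointwise, but the second line of Definition~\ref{d2} reads $u(x_j,t)=\big(\int_{I_j}\rho_0u_0\big)/\big(\int_{I_j}\rho_0\big)$, which makes the \emph{integrated} identities over $I_j$ hold and pins down the atom ratios $\Delta q(x_j)=u(x_j,t)\Delta m(x_j)$, $\Delta E(x_j)=\tfrac12u(x_j,t)^2\Delta m(x_j)$. On $R$, at $d\eta$-a.e.\ $\eta$ one has $X(\eta,t)=:x\notin\{x_j\}$, hence $y_*(x,t)=y^*(x,t)=\eta$, and $\eta$ is a Lebesgue point of $u_0$; minimality of $\eta$ for $F(\cdot,x,t)$ then forces $tu_0(\eta)+\eta-x=0$, i.e.\ $u_0(\eta)=(x-\eta)/t=u(x,t)$ — precisely \cite[Lemma~2.4]{WHD97}, restated in Lemma~\ref{lem:curves}. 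In $\mathcal L$ and $\mathcal R$ the analogous first-order conditions are $u_{b_l}(\tau)=x/(t-\tau)$ and $u_{b_r}(\xi)=(x-1)/(t-\xi)$.

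The remaining step, which I expect to be the main obstacle, is the at most countably many degenerate points of $\mathcal N$. At such an $x_0$ a left neighbourhood lies in one of the three blocks and a right neighbourhood in another — or $x_0$ is the unique point where $G_{b_l}=G_{b_r}$, cf.\ Remark~\ref{rem2.9}. Choosing $m(\cdot,t)$ left-continuous, one writes $\Delta m(x_0)$, $\Delta q(x_0)$, $\Delta E(x_0)$ as sums of the appropriate one-sided boundary/initial integrals; as before, every source curve arrives at $(x_0,t)$, so all the weights $u(X(\eta,t),t)$, $u(Y(\eta,t),t)$, $u(Z(\eta,t),t)$ entering $E$ collapse to $u(x_0,t)$; and one checks, separately for each of the interface types ($F=G_{b_l}$, $F=G_{b_r}$, $G_{b_l}=G_{b_r}$), that the value of $u(x_0,t)$ prescribed by the matching line of Definition~\ref{d2} equals $\Delta q(x_0)/\Delta m(x_0)$, so that also $\Delta E(x_0)=\tfrac12u(x_0,t)^2\Delta m(x_0)$. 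In particular $q(\cdot,t)$ and $E(\cdot,t)$ jump only where $m(\cdot,t)$ does, so $dq\ll dm$ and $dE\ll dm$ and the Radon-Nikodym densities have been identified $dm$-almost everywhere, proving (i) and (ii). The delicate part is the bookkeeping at these points — which minimizer ($y_*$ versus $y^*$, $\tau_*$ versus $\tau^*$, $\xi_*$ versus $\xi^*$) and which one-sided limit enters each jump, and the mutual consistency of the three competing formulas for $u$, $m$, $q$ at the triple points; the continuous (push-forward) part, by contrast, is immediate once the first-order optimality of Lemma~\ref{lem:curves} is in hand.
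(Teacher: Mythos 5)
Your proposal is correct in substance but follows a genuinely different route from the paper. The paper proves both identities by working directly with difference quotients: for each region ($F$, $G_{b_l}$ or $G_{b_r}$ strictly minimal) it uses the minimizing inequalities of the potentials, e.g.\ $G_{b_r}(\xi_*(x_1,t),x_1,t)\le G_{b_r}(\xi_*(x_2,t),x_1,t)$, to sandwich $\bigl(q(x_2,t)-q(x_1,t)\bigr)/\bigl(m(x_2,t)-m(x_1,t)\bigr)$ and $\bigl(E(x_2,t)-E(x_1,t)\bigr)/\bigl(q(x_2,t)-q(x_1,t)\bigr)$ between the values $\tfrac{x-1}{t-\xi_*}$ and $\tfrac{x-1}{t-\xi^*}$ and pass to the limit, and it then treats the equality sets $F=G_{b_l}$, $F=G_{b_r}$, $G_{b_l}=G_{b_r}$ by explicit one--sided limit computations (isolated points) or by constancy of $m,q,E$ (rarefaction regions, via Remark~\ref{rem2.9}). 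You instead identify $dm$, $dq$, $dE$ on each block as push-forwards of $\rho_0\,d\eta$, $\rho_0u_0\,d\eta$, $\tfrac12\rho_0u_0\,u(X(\cdot,t),t)\,d\eta$ (resp.\ their boundary analogues) under the Lagrangian maps, reduce both identities to $u(X(\eta,t),t)=u_0(\eta)$ off the shock intervals via the first-order optimality condition $tu_0(\eta)+\eta-x=0$ (i.e.\ \cite[Lemma~2.4]{WHD97}, as in Lemma~\ref{lem:curves}), and use the averaged formulas of Definition~\ref{d2} to pin the atoms; this buys a cleaner conceptual picture (absolute continuity $dq\ll dm$, $dE\ll dm$ and the densities come out as conditional expectations, and the $E$-identity is not derived separately but falls out of the same collapse argument), at the price of having to justify the push-forward identification (monotonicity and one-sided continuity of $y_*,\tau_*,\xi_*$, plus the fact that forward curves from a shock interval all reach $(x_j,t)$ — this is the non-crossing/triangle structure rather than Lemma~\ref{unique}(1) per se) and of deferring the jump bookkeeping at interface points, which is exactly the computation the paper carries out explicitly (e.g.\ $\Delta q(x_0)/\Delta m(x_0)$ matching the $F=G_{b_r}$ and $G_{b_l}=G_{b_r}$ lines of Definition~\ref{d2} via $\xi_*(x_0+,t)=\xi^*(x_0,t)$, $y_*(x_0-,t)=y_*(x_0,t)$). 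One minor imprecision: the interface set minus its positive-length components need not be countable (it is the zero set of a continuous function), but this is harmless because on $\{F=G_{b_l}\}$ and $\{F=G_{b_r}\}$ the minimizers are pinned (Remark~\ref{rem2.9}) and $m$, $q$, $E$ are constant on the convex hull of each such set, so only the countably many endpoints of complementary intervals actually require the jump analysis.
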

\begin{proof}
If $F(x,t)< \min\{G_{b_l}(x,t), G_{b_r}(x,t)\}$, the result follows from Lemma~2.8. in \cite{Wang01} and similarly, if $G_{b_l}(x, t)< \min\left\{F(x, t), G_{b_r}(x, t)\right\},$ then the result is Lemma 3.4 in \cite{NOSS22}. Now, we consider a point $(x, t)$ where $G_{b_r}(x,t)<\min\{F(x,t), G_{b_l}(x,t)\}$.
If $\xi_*$ is constant in some neighborhood $(x,t)$, then the above quantities are constant  and lemma holds trivially. Now suppose $\xi_*(x,t)$ is not constant in a neighborhood of $(x,t)$ and assume
$\xi_*(x,t)= \xi^ *(x,t)=\xi(x,t)$. Let $x_1 < x < x_2$, then by definition
\begin{equation*}
\begin{aligned}
G_{b_r}(\xi_{*}(x_1,t), x_1, t)= \int_{0} ^ {\xi_{*}(x_1,t)}[x_1-1-(t-\eta)u_{b_r}(\eta)]\rho_{b_r}(\eta)u_{b_r}(\eta)d\eta+ F(1, x_1, t)\\
G_{b_r}(\xi_{*}(x_2,t), x_1 ,t)= \int_{0} ^ {\xi_{*}(x_2,t)}[x_1-1-(t-\eta)u_{b_r}(\eta)]\rho_{b_r}(\eta)u_{b_r}(\eta)d\eta + F(1, x_1, t)\,.
\end{aligned}
\end{equation*}
By the minimizing properties we have
\begin{equation*}
G_{b_r}(\xi_{*}(x_1,t), x_1, t) \leq G_{b_r}(\xi_{*}(x_2,t), x_1 ,t)\,,
\end{equation*}
and the definitions lead us to the following inequality:
\begin{equation}\label{e3.5}
\frac{x_1}{t-\xi_{*}(x_2,t)} \geq \frac{\int_{\xi_{*}(x_1,t)}^{\xi_{*}(x_2,t)}\big(\frac{t-\eta}{t-\xi_{*}(x_2,t)}\big)u^2_{b_r}(\eta)\rho_{b_r}(\eta)d\eta}{\int_{\xi_{*}(x_1,t)}^{\xi_{*}(x_2,t)}\rho_{b_r}(\eta)u_{b_r}(\eta)d\eta}\,.
\end{equation}
Now since
\[
\frac{t-\eta}{t-\xi_*(x_2,t)}\geq 1\,,
\]
and since $\xi_*$ is semi-continuous we can take the limits $x_1\nearrow x$ and $x_2\searrow x$ in \eqref{e3.5} to derive
\begin{equation}\label{e3.6}
\frac{x}{t-\xi(x,t)}\geq \lim_{x_2, x_1 \to x}\frac{q(x_2,t)-q(x_1,t)}{m(x_2,t)-m(x_1,t)}\,.
\end{equation}
Similarly considering the inequality
\begin{equation*}
G_{b_r}(\xi_{*}(x_2,t), x_2, t) \leq G_{b_r}(\xi_{*}(x_1,t), x_2 ,t)
\end{equation*}
and following the analysis as above we get
\begin{equation}
\frac{x}{t-\tau(x,t)}\leq \lim_{x_2, x_1 \to x}\frac{q(x_2,t)-q(x_1,t)}{m(x_2,t)-m(x_1,t)}.
\label{e3.7}
\end{equation}
From equations \eqref{e3.6} and \eqref{e3.7}, we conclude $dq= u dm$.\\
If  $\xi_{*}(x,t)< \xi^{*}(x,t)$, then
\begin{equation*}
\lim_{x_2, x_1 \to x}\frac{q(x_2,t)-q(x_1,t)}{m(x_2,t)-m(x_1,t)}
=\lim_{x_2, x_1 \to x}\frac{\int_{\xi_{*}(x_2,t)} ^ {\xi_{*}(x_1,t)}\rho_{b_r} u_{b_r}^2}{\int_{\xi_{*}(x_2,t)} ^ {\xi_{*}(x_1,t)}\rho_{b_r} u_{b_r}}
= \frac{\int_{\xi_{*}(x,t)} ^ {\xi^{*}(x,t)}\rho_{b_r} u^2_{b_r}}{\int_{\xi_{*}(x,t)} ^ {\xi^{*}(x, t)}\rho_{b_r} u_{b_r}}\,.
\end{equation*}
Now we consider the remaining cases, where $(x, t)$ is a point with $F(x,t)= G_{b_l}(x,t),$ or $F(x,t)=G_{b_r}(x,t)$ or $G_{b_l}(x,t)=G_{b_r}(x,t)$. The first case is considered in \cite{NOSS22}. Next, let $F(x,t)=G_{b_r}(x,t)$ holds for isolated point. Then for $x_1<x<x_2,$ we have  
\begin{multline*}\label{e3.9}
\begin{aligned}
\lim_{x_2, x_1 \to x}\frac{q(x_2,t)-q(x_1,t)}{m(x_2,t)-m(x_1,t)}&=\lim_{x_2, x_1 \to x}\frac{-\int_{0} ^ {\xi_{*}(x_2,t)}\rho_{b_r} u^2_{b_r}-\int_{0} ^ {y_{*}(x_1,t)}\rho_0 u_0+\int_0^1\rho_0 u_0}{-\int_{0} ^ {\xi_{*}(x_2,t)}\rho_{b_r} u_{b_r} -\int_{0} ^ {y_{*}(x_1,t)}\rho_0+\int_0^1 \rho_0}\\
&= \frac{\int_{0} ^ {\xi^{*}(x,t)}\rho_{b_r} u^2_{b_r}+\int_{0} ^ {y^{*}(x,t)}\rho_0 u_0-\int_0^1 \rho_0 u_0}{\int_{0} ^ {\xi^{*}(x,t)}\rho_{b_r} u_{b_r} +\int_{0} ^ {y^{*}(x,t)}\rho_0-\int_0^1 \rho_0}\,=u(x,t).
\end{aligned}
\end{multline*}
Here we used $\xi_*(x+, t)=\xi^*(x,t)$ and $y_*(x-, t)=y_*(x,t)$. Similarly using the property $\xi_*(x_2,t) \to \xi^*(x,t)$ and $\tau_*(x,t) \to \tau^*(x,t)$ one can show
\[\lim_{x_2, x_1 \to x}\frac{q(x_2,t)-q(x_1,t)}{m(x_2,t)-m(x_1,t)}=u(x,t)\] when $G_{b_l}(x,t)=G_{b_r}(x,t)$ holds for isolated points.
On the other hand, if $F(x,t)=G_{b_l}(x,t)$ holds in an entire neighborhood of $(x,t)$ then the region corresponds to a rarefaction wave originating from zero. Consequently, $\tau^*(x,t)=y^*(x,t)=0$ throughout this neighborhood. By definition $m$, $q$ and $E$ are all zero, completing the proof. Similarly, if $F(x, t)=G_{b_r}(x, t)$ in a neighborhood of $(x, t),$ then this case corresponds to the rarefaction wave emanating from $(1, 0)$ and hence by Remark \ref{rem2.9} $y^*(x, t)=1, \xi^*(x,t)=0$ throughout the neighborhood. From the definition, we have $m(x, t)=\int_0^1 \rho_0(\eta)\, d\eta, q(x, t)=\int_0^1 \rho_0(\eta)u_0(\eta)\,d\eta$ and $E(x, t)=\frac{1}{2}\int_0^1 \rho_0(\eta)u_0(\eta)u(X(\eta, t), t)\,d\eta.$ Thus $dq=dm=dE=0$ and thus the proof is finished. Moreover, from Remark \ref{rem2.9}, we have that $G_{b_l}(x, t)=G_{b_r}(x, t)$ can be possible only at the isolated points.

In the boundary points of the region $F(x,t)=G_{b_l}(x,t)$ or $F(x,t)=G_{b_r}(x, t),$ the same proof as $F<G_{b_l}, F>G_{b_l}$ or $F<G_{b_r}, F>G_{b_r}$ works on the left and right boundary, respectively. Thus in all possible cases we derived that $dq= u\, dm$ in the sense of Radon-Nikodym derivative.

Now we turn our attention to the proof of $dE=\frac{1}{2} u^2 dm$.
First, let $(x,t)$ again be a point where $G_{b_r}(x,t) < \min\{F(x,t), G_{b_l}(x,t)\}$ and $\xi_{*}(x,t)=\xi^{*}(x,t)$. Then
\begin{equation}\label{e3.10}
E(x_1, t)-E(x_2, t)= \tfrac{1}{2}\int_{\xi_{*}(x_1, t)} ^ {\xi_{*}(x_2, t)} \rho_{b_r} (\eta) u^2_{b_r} (\eta) u(Z(\eta, t), t)d\eta\,.
\end{equation}
Note that for  $\xi_{*}(x_1, t) \leq \eta \leq \xi_{*}(x_2, t)$ we have
\begin{equation}\label{e3.11}
\frac{x-1}{t- \xi_{*}(x_1, t)} \leq u(Z(\eta, t), t) \leq  \frac{x-1}{t- \xi_{*}(x_2, t)}\,.
\end{equation}
Hence from equation \eqref{e3.10} and \eqref{e3.11} we derive
\[
\frac{1}{2}\frac{x-1}{t- \xi_{*}(x_1, t)} \leq \frac{E(x_2, t)-E(x_1, t)}{q(x_1, t)-q(x_2, t)} \leq \frac{1}{2} \frac{x-1}{t- \xi_{*}(x_2, t)}\,.
\]
In the limit $x_1\nearrow x\swarrow x_2$, we have $\xi_*(x_1,t) \to \xi_*(x,t)$ and $\xi_*(x_2,t) \to  \xi^*(x,t),$ hence $\frac{E(x_2, t)-E(x_1, t)}{q(x_1, t)-q(x_2, t)} \to \frac{1}{2}u$ and
we know $\frac{dq}{dm}= u$. Combining these two, we get $dE= \frac{1}{2} u^2 dm$.\\
For the case $F(x,t)=G_{b_r}(x,t)$ in an isolated point. Then noting that $(x_1,t)$ lies left of $(x,t)$ and thus $F(x_1,t)<G_{b_r}(x_1,t)$, and the opposite holds true for $(x_2,t).$ Thus, we have
\begin{equation*}
\begin{split}
&\lim_{x_1, x_2 \to x}\frac{E(x_2, t)-E(x_1, t)}{q(x_2, t)-q(x_1, t)}=\\
&= \frac{\frac{1}{2}\int_{0}^{y_* (x,t)}\rho_0(\eta)u_0(\eta)u(X(\eta,t),t)d\eta+\frac{1}{2}\int_{0}^{\xi^{*} (x,t)}\rho_{b_r}(\eta)u^2_{b_r} (\eta)u(Z(\eta,t),t)d\eta}
{\int_{0}^{y_* (x,t)}\rho_0(\eta)u_0(\eta)d\eta+\int_{0}^{\xi^{*} (x,t)}\rho_{b_r}(\eta)u^2_{b_r} (\eta) d\eta-\int_0^1\rho_0(\eta)u_0(\eta)d\eta}\\
&-\frac{-\frac{1}{2}\int_0^1 \rho_0(\eta)u_0(\eta)u(X(\eta,t),t)d\eta}{\int_{0}^{y_* (x,t)}\rho_0(\eta)u_0(\eta)d\eta+\int_{0}^{\xi^{*} (x,t)}\rho_{b_r}(\eta)u^2_{b_r} (\eta) d\eta-\int_0^1\rho_0(\eta)u_0(\eta)d\eta}.
\end{split}
\end{equation*}
For $\eta \in [y_* (x,t)), 1]$, we have $u(X(\eta,t),t)=u(x,t)$ and similarly for  $\eta \in [0, \xi^ * (x,t))]$ we know that $u(Z(\eta,t),t)=u(x,t)$. Also we notice that for $\eta \in [0,1],$ $u(X(\eta,t),t)=u(x,t).$\\
Thus from the equation above we have
\[
\lim_{x_1, x_2 \to x}\frac{E(x_2, t)-E(x_1, t)}{q(x_2, t)-q(x_1, t)}=\frac{1}{2}u(x,t)\,.
\]
Since $dq =u dm$,  we again get  $dE=\frac{1}{2} u^2dm$.
This completes the proof.
\end{proof}
\begin{lem} \label{lem:integrals}
Define
\[
\mu(x,t)= \min \left\{F(x,t), G_{b_l}(x,t), G_{b_r}(x, t)\right\}\,,
\]
then the following  holds for $x_1,x_2,t>0$:
\begin{equation}
\int_{x_1}^{x_2} m(x,t)dx= \mu(x_1,t)-\mu(x_2, t)\,.
\label{e3.13}
\end{equation}
\begin{equation}
\int_{t_1}^{t_2} q(x,t) dt =\mu(x, t_2)- \mu (x, t_1)\,.
\label{e3.14}
\end{equation}
\end{lem}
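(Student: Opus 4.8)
The plan is to recognize $\mu(x,t)=\min\{F(x,t),G_{b_l}(x,t),G_{b_r}(x,t)\}$ as a locally Lipschitz function of $(x,t)$ whose almost-everywhere partial derivatives are $\partial_x\mu=-m$ and $\partial_t\mu=q$, and then to obtain \eqref{e3.13} and \eqref{e3.14} from the fundamental theorem of calculus for absolutely continuous functions.

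First I would record the Lipschitz regularity. On any compact subset of $\Omega^{o}$ each of $F(y,\cdot,\cdot)$, $G_{b_l}(\tau,\cdot,\cdot)$, $G_{b_r}(\xi,\cdot,\cdot)$ is Lipschitz in $(x,t)$ \emph{uniformly} in the minimizing parameter, because the parameters range over bounded intervals ($y\in[0,1]$ and $\tau,\xi\le t$ by the sign remarks following \eqref{rightBdpot}) and the $(x,t)$-gradients are integrals of the locally bounded data over those intervals; hence $\mu$, being a pointwise minimum of uniformly Lipschitz functions, is locally Lipschitz, and $x\mapsto\mu(x,t)$, $t\mapsto\mu(x,t)$ are absolutely continuous. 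The structure to keep in mind is that, for a fixed parameter, $x\mapsto F(y,x,t)$ is \emph{affine} with slope $-\int_0^{y}\rho_0$, and $x\mapsto G_{b_l}(\tau,x,t)$, $x\mapsto G_{b_r}(\xi,x,t)$ are affine with slopes $\int_0^{\tau}\rho_{b_l}u_{b_l}$ and $\int_0^{\xi}\rho_{b_r}u_{b_r}-\int_0^1\rho_0$ (the last because $x\mapsto F(1,x,t)$ is affine with slope $-\int_0^1\rho_0$), while their $t$-derivatives are $\int_0^{y}\rho_0u_0$, $-\int_0^{\tau}\rho_{b_l}u_{b_l}^2$ and $-\int_0^{\xi}\rho_{b_r}u_{b_r}^2+\int_0^1\rho_0u_0$. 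In particular each of $F(\cdot,t)$, $G_{b_l}(\cdot,t)$, $G_{b_r}(\cdot,t)$ is concave in $x$ (a minimum of affine functions), hence differentiable wherever its minimizer is unique, with derivative the corresponding slope evaluated at that minimizer.

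The main step is the identification of the derivatives. Fix $t$. Discard first the rarefaction regions emanating from $(0,0)$ or $(1,0)$: on them $m$ and $q$ are locally constant and $\mu$ is affine (equal to $0$, resp.\ to $F(1,\cdot,t)$) by Remark~\ref{rem2.9}, so $\partial_x\mu=-m$ and $\partial_t\mu=q$ there by inspection. Off these regions a \emph{transversal} coincidence of two of the potentials occurs at most at isolated points (again by Remark~\ref{rem2.9} and its left-boundary analogue), where $\mu(\cdot,t)$ has a corner and is therefore non-differentiable; and the set $\{y_{*}<y^{*}\}\cup\{\tau_{*}<\tau^{*}\}\cup\{\xi_{*}<\xi^{*}\}$ of non-unique minimizers is countable, since by Lemma~\ref{lnew}(1)--(3) the minimizer intervals $[y_{*}(\cdot,t),y^{*}(\cdot,t)]$ and their analogues are pairwise disjoint. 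At every remaining $x$---a full-measure set---a single potential is strictly active in a neighbourhood and has a unique minimizer, so $\mu(\cdot,t)$ coincides with that potential near $x$ and is differentiable there with $\partial_x\mu(x,t)$ equal to its slope, that is $-\int_0^{y_{*}}\rho_0$, $\int_0^{\tau_{*}}\rho_{b_l}u_{b_l}$, or $\int_0^{\xi_{*}}\rho_{b_r}u_{b_r}-\int_0^1\rho_0$, which is exactly $-m(x,t)$ in the three branches of Definition~\ref{d3}. The identical argument in the time variable, using the $t$-monotonicity of the minimizers from Lemma~\ref{lnew}(1),(2), gives $\partial_t\mu(x,t)=q(x,t)$ for a.e.\ $t$, the three $t$-slopes above being precisely the three branches of $q$ in Definition~\ref{def:qE}.

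Integrating these a.e.\ identities against the absolute continuity from the first step gives
\[
\int_{x_1}^{x_2} m(x,t)\,dx=-\bigl(\mu(x_2,t)-\mu(x_1,t)\bigr)=\mu(x_1,t)-\mu(x_2,t),\qquad \int_{t_1}^{t_2} q(x,t)\,dt=\mu(x,t_2)-\mu(x,t_1),
\]
which are \eqref{e3.13} and \eqref{e3.14}. I expect the main obstacle to be the bookkeeping in the previous paragraph: making sure that, off a null set, the minimum is realized by a single potential that does not switch and whose slope lands on the correct case of the piecewise definitions of $m$ and $q$---this is where the semicontinuity and monotonicity of the minimizer maps (Lemma~\ref{lnew}) and the structural description of the coincidence sets (Remark~\ref{rem2.9}, in particular that $\{G_{b_l}=G_{b_r}\}$ is a single point and that on $\{F=G_{b_r}\}$ one has $y_{*}=y^{*}=1$ and $\xi_{*}=\xi^{*}=0$) do the essential work. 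The branches containing $G_{b_r}$ are the genuinely new ones; the rest follows as in \cite{NOSS22,WHD97,Wang01}.
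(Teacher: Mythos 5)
Your route is genuinely different from the paper's, and it works. The paper never differentiates $\mu$: for fixed $t$ it proves, by a case analysis on which potential realizes the minimum at two points $x<x'$ (cases (a)--(f) of \eqref{case}, the new ones being those involving $G_{b_r}$), the two-sided finite-difference inequality $(x-x')\,m(x',t)\leq \mu(x',t)-\mu(x,t)\leq (x-x')\,m(x,t)$, each case handled by inserting an intermediate term, discarding a sign-definite difference via the minimizing property, and reading off the slope of the remaining single potential with frozen parameter; it then telescopes Riemann sums, using monotonicity (resp.\ bounded variation) of $m(\cdot,t)$ (resp.\ $q(x,\cdot)$) for Riemann integrability, and argues analogously in $t$ with the nine cases of \eqref{Equation 3.18}. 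You instead exploit that $\mu$ is an infimum of functions affine in $(x,t)$, hence concave and locally Lipschitz, identify $\partial_x\mu=-m$ and $\partial_t\mu=q$ a.e.\ by the envelope structure, and integrate by absolute continuity. What your approach buys is brevity and a conceptual explanation of why the three branches of Definition~\ref{d3} and Definition~\ref{def:qE} are exactly the slopes of the active potential at its extremal minimizer; what the paper's approach buys is that it is entirely elementary, needs no null-set bookkeeping at branch switches or non-unique minimizers, and produces the sandwich inequalities \eqref{e3.21}, \eqref{eq3.20} that are reused verbatim in the proof of Lemma~\ref{lem: h function}.

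Two soft spots in your bookkeeping, both repairable. First, in the time direction your countability claim for non-unique minimizers only covers $\tau$ and $\xi$: Lemma~\ref{lnew} gives monotonicity of $y_*,y^*$ in $x$, not in $t$, so disjointness of the intervals $[y_*(x,t),y^*(x,t)]$ as $t$ varies is not available from the lemma. Second, Remark~\ref{rem2.9} does not literally say that $\{F=G_{b_r}\}$ off the rarefaction region consists of isolated points; it says $y_*=1$, $\xi^*=0$ there, so both potentials equal the affine function $F(1,\cdot,t)$ on that set (whence, by concavity, the set is an interval). Both issues disappear if you phrase the derivative identification via the touching property: at any point where the Lipschitz (indeed concave) function $\mu(\cdot,t)$ or $\mu(x,\cdot)$ is differentiable, every potential with a frozen minimizing parameter lies above $\mu$ and touches it there, so its slope must equal the derivative; this forces $\partial_x\mu=-m$ and $\partial_t\mu=q$ a.e.\ regardless of uniqueness of minimizers or of the structure of the coincidence sets, and then your FTC step is unconditional.
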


\begin{proof}
Let $t>0$ be fixed. We start with the proof of \eqref{e3.13}. Now for any two points $x, x^{\prime} \in [x_1, x_2],$  with $ x< x^{\prime}$,  we claim the following:
\begin{equation}\label{e3.21}
(x-x^{\prime}) m (x^{\prime}, t) \leq \mu(x^{\prime},t)-\mu(x, t)\leq (x-x^{\prime}) m (x, t).
\end{equation}
Depending upon the minimization, we have the possible values of $\mu(x, t)$ and $\mu(x^{\prime}, t):$
\begin{equation}\label{case}
\begin{aligned}
&(a)\,\,\,\, \mu(x,t)=F(x,t),\, \mu(x^{\prime},t)=F(x^{\prime},t),\,\, &&(b)\,\,\,\, \mu(x,t)=G_{b_r}(x,t), \,\mu(x^{\prime},t)=F(x^{\prime},t),\\
&(c)\,\,\,\, \mu(x,t)=G_{b_l}(x,t),\, \mu(x^{\prime},t)=G_{b_l}(x^{\prime},t),\,\,&&(d)\,\,\,\, \mu(x,t)=G_{b_r}(x,t),\, \mu(x^{\prime},t)=G_{b_r}(x^{\prime},t),\\
&(e)\,\,\,\, \mu(x,t)=G_{b_l}(x,t),\, \mu(x^{\prime},t)=F(x^{\prime},t),\,\,&&(f)\,\,\,\, \mu(x,t)=G_{b_l}(x,t),\, \mu(x^{\prime},t)=G_{b_r}(x^{\prime},t).
\end{aligned}
\end{equation}
The proof of the inequality \eqref{e3.21} for the case of (a) (c) and (e) in \eqref{case} can be found in \cite[Lemma~3.5]{NOSS22}. Here we prove for the cases (b), (d) and (f).  In case (b) we have
\begin{equation*}\
\begin{aligned}
&\mu(x^{\prime},t)-\mu(x,t)\\
&=F(y_*(x^{\prime},t), x^{\prime}, t)-G_{b_r}(\xi_*(x,t), x, t)\\
&=[F(y_*(x^{\prime},t), x^{\prime}, t)-F(y_*(x^{\prime},t), x, t)]+[F(y_*(x^{\prime},t), x, t)-G_{b_r}(\xi_*(x,t), x, t)]
\end{aligned}
\end{equation*}
Since $F(y_*(x^{\prime},t), x, t)-G_{b_r}(\xi_*(x,t), x, t) \geq 0$, we get
\begin{equation}\label{eq3.17}
\mu(x^{\prime},t)-\mu(x,t)\geq F(y_*(x^{\prime},t), x^{\prime}, t)-F(y_*(x^{\prime},t), x, t).
\end{equation}
On the other hand, 
\begin{equation*}
\begin{aligned}
&\mu(x^{\prime},t)-\mu(x,t)\\
&=F(y_*(x^{\prime},t), x^{\prime}, t)-G_{b_r}(\xi_*(x,t), x, t)\\
&=[F(y_*(x^{\prime},t), x^{\prime}, t)-G_{b_r}(\xi_*(x,t), x^{\prime}, t)]+[G_{b_r}(\xi_*(x,t), x^{\prime}, t)-G_{b_r}(\xi_*(x,t), x, t)].
\end{aligned}
\end{equation*}
Since $F(y_*(x^{\prime},t), x^{\prime}, t)-G_{b_r}(\xi_*(x,t), x^{\prime}, t)\leq 0$, we get
\begin{equation}\label{eq3.18}
\mu(x^{\prime},t)-\mu(x,t)\leq G_{b_r}(\xi_*(x,t), x^{\prime}, t)-G_{b_r}(\xi_*(x,t), x, t).
\end{equation}
Again, from the equations \eqref{eq3.17}-\eqref{eq3.18} and using the definition of $m$, $F$ and $G_{b_r}$, we get \eqref{e3.21}.

\noindent Now for the case (d), we have
\begin{multline*}
\mu(x^{\prime},t)-\mu(x, t)=G_{b_r}(\xi_{*}(x^{\prime},t), x^{\prime}, t)-G_{b_r}(\xi_{*}(x,t), x, t)\\
=[G_{b_r}(\xi_{*}(x^{\prime},t), x^{\prime}, t)-G_{b_r}(\xi_{*}(x,t), x^{\prime}, t)]+[G_{b_r}(\xi_{*}(x,t), x^{\prime}, t)-G_{b_r}(\xi_{*}(x,t), x, t))]\,.
\end{multline*}
Since the first bracket of the above expression is negative, we get
\begin{equation} \label{e3.17}
 \mu(x^{\prime},t)-\mu(x, t) \leq G_{b_r}(\xi_{*}(x,t), x^{\prime}, t)-G_{b_r}(\xi_{*}(x,t), x, t)\,.
\end{equation}
On the other hand, we write
 \begin{multline*}
\mu(x^{\prime},t)-\mu(x, t)=G_{b_r}(\xi_{*}(x^{\prime},t), x^{\prime}, t)-G_{b_r}(\xi_{*}(x,t),  x, t)\\
= [G_{b_r}(\xi_{*}(x^{\prime},t), x^{\prime}, t)-G_{b_r}(\xi_{*}(x^{\prime},t), x , t)]+[G_{b_r}(\xi_{*}(x^{\prime},t), x, t)-G_{b_r}(\xi_{*}(x,t), x, t))]\,.
\end{multline*}
Now the second bracket of the expression is positive and thus
\begin{equation}\label{e3.19}
\mu(x^{\prime},t)-\mu(x, t) \geq G_{b_r}(\xi_{*}(x',t), x^{\prime}, t)-G_{b_r}(\xi_{*}(x',t), x, t)\,.
\end{equation}
Combining inequalities \eqref{e3.17}, \eqref{e3.19} and using the definitions of $G_{b_r}$ and $m$ we have \eqref{e3.21}. Following the similar argument we can prove (f).

Now since $m$ is monotonous in $x$, it is also Riemann integrable. Taking Riemann sums and using \eqref{e3.21}, we get
\[
\int_{x_1}^{x_2} m(x,t)dx= \mu(x_1,t)-\mu(x_2, t)\,.
\]
To prove \eqref{e3.14} we claim the following inequality holds:
\begin{equation}\label{eq3.20}
(t'-t)q(x,t')\leq\mu(x,t')-\mu(x,t)\leq(t'-t)q(x,t)\,.
\end{equation}
 To prove the above inequality, we have the following possibilities for $\mu(x,t)$ and $\mu(x,t^{\prime})$:
\begin{equation}\label{Equation 3.18}
\begin{aligned}
&(a)\,\,\,\, \mu(x,t)=F(x,t),\,\, \mu(x,t^{\prime})=F(x,t^{\prime}),&&(b)\,\,\,\, \mu(x,t)=G_{b_l}(x,t),\,\, \mu(x,t^{\prime})=G_{b_l}(x,t^{\prime}),\\
&(c)\,\,\,\, \mu(x,t)=F(x,t),\,\, \mu(x,t^{\prime})=G_{b_l}(x,t^{\prime}),&&(d)\,\,\,\, \mu(x,t)=G_{b_l}(x,t),\,\, \mu(x,t^{\prime})=F(x,t^{\prime})\\
&(e)\,\,\,\, \mu(x,t)=F(x,t),\,\, \mu(x,t^{\prime})=G_{b_r}(x,t^{\prime}), &&(f)\,\,\,\, \mu(x,t)=G_{b_l}(x,t),\,\, \mu(x,t^{\prime})=G_{b_r}(x,t^{\prime}),\\
&(g)\,\,\,\, \mu(x,t)=G_{b_r}(x,t),\,\, \mu(x,t^{\prime})=G_{b_l}(x,t^{\prime}),&&(h)\,\,\,\, \mu(x,t)=G_{b_r}(x,t),\,\, \mu(x,t^{\prime})=F(x,t^{\prime})\\
&(i)\,\,\,\, \mu(x,t)=G_{b_r}(x,t),\,\, \mu(x,t^{\prime})=G_{b_r}(x,t^{\prime}).
\end{aligned}
\end{equation}
The proof for the cases (a)-(d) on $\{x\}\times[t_1,t_2]$ can be found in \cite[Lemma 3.5]{NOSS22}. Here we shall prove the cases (e)-(i). Since the proofs are similar, we only present the proof of case $(i)$ here. For $t'>t\in[t_1,t_2],$ we observe,
\begin{align}\label{ee3.22}
\mu(x,t^{\prime})-\mu(x, t)&=G_{b_r}(\xi_{*}(x,t^{\prime}), x, t^{\prime})-G_{b_r}(\xi_{*}(x,t), x, t)\nonumber\\
&= [G_{b_r}(\xi_{*}(x,t^{\prime}), x, t^{\prime})-G_{b_r}(\xi_{*}(x,t), x, t^{\prime})]+[G_{b_r}(\xi_{*}(x,t), x, t^{\prime})-G_{b_r}(\xi_{*}(x,t), x, t))]\nonumber\\
&\leq G_{b_r}(\xi_{*}(x,t), x, t^{\prime})-G_{b_r}(\xi_{*}(x,t), x, t))\,,
\end{align}
and
\begin{align}\label{ee3.23}
\mu(x,t^{\prime})-\mu(x, t)&=G_{b_r}(\xi_{*}(x,t^{\prime}), x, t^{\prime})-G_{b_r}(\xi_{*}(x,t), x, t)\nonumber\\
&=[G_{b_r}(\xi_{*}(x,t^{\prime}), x, t^{\prime})-G_{b_r}(\xi_{*}(x,t^{\prime}), x, t)]+[G_{b_r}(\xi_{*}(x,t^{\prime}), x, t)-G_{b_r}(\xi_{*}(x,t), x, t))]\nonumber\\
&\geq G_{b_r}(\xi_{*}(x,t^{\prime}), x, t^{\prime})-G_{b_r}(\xi_{*}(x,t^{\prime}), x, t)\,.
\end{align}
Combining the inequalities \eqref{ee3.22}-\eqref{ee3.23} and using the definition of $q(x, t),$ we get \eqref{eq3.20}. Since for a fixed $x$, $y_*(x,t)$ is monotone in the interval $[t_1,t_2]$, $q(x,t)$ is a function of bounded variation and hence Riemann integrable. Thus following a similar argument as before, identity \eqref{e3.14} follows from \eqref{eq3.20}.
\end{proof}
\begin{rem}
We observe that \eqref{case} presents fewer possibilities compared to \eqref{Equation 3.18}. The excluded possibilities in \eqref{case} are ruled out by Lemma \ref{unique}, as they would lead to a contradiction. 
\end{rem}
From the above Lemma \ref{lem:integrals}, we have $\mu_x= m$ and $\mu_t = q.$ Let $\varphi$ is a test function and we show the first integral identity \eqref{weak formulation} of Definition \ref{intro-defn-weak formulation} holds.
\begin{align}\label{e3.22}
0=\int \int [\mu_x \phi_t (x, t)- \mu_t \phi_x (x, t)] dx dt
=\int \int [m(x,t) \phi_t (x, t)- q(x,t)\phi_x (x, t)] dx dt=\nonumber\\
=\int \int [m(x,t) \phi_t (x, t)dx dt- \int \int u(x,t)\phi(x, t)dm dt\,.
\end{align}
This identity proves that $(\rho, u)$ satisfies the first equation of \eqref{e1.1} in the sense of Definition \ref{intro-defn-weak formulation}.
To prove the second equation, the following lemma is crucial.
\begin{lem}\label{lem: h function}
For $x, t>0$,  let us define
 \begin{equation*}
 H(x, t)=\left\{
 \begin{aligned}
 &H_1(x,t)=\int_0 ^ {y_{*} (x,t)} \rho_0 (\eta) u_0 (\eta) (X(\eta, t)-x) d\eta\,, &&\text{if }  F(x,t)\leq \min\{ G_{b_l}(x,t), G_{b_r}(x,t)\},\\
&H_2(x,t)=-\int_0 ^ {\tau_{*}(x,t)} \rho_b (\eta) u^2_b (\eta) (Y(\eta, t)-x) d\eta\,, &&\text{if } G_{b_l}(x,t)\leq \min \{F(x,t), G_{b_r}(x,t)\},\\
 &H_3(x,t)=-\int_0 ^ {\xi_{*}(x,t)} \rho_{b_r} (\eta) u^2_{b_r} (\eta) (Z(\eta, t)-x) d\eta\\
&+\int_0^1\rho_0(\eta)u_0(\eta)(X(\eta,t)-x)d\eta\,, &&\text{if } G_{b_r}(x,t)< \min \{F(x,t), G_{b_l}(x,t)\},
\end{aligned}\right.
 \end{equation*}
then we have $H_x = - q $ and $H_t = 2 E$.
\end{lem}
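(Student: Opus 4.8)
The plan is to prove both identities in the integrated sense in which Lemma~\ref{lem:integrals} yields $\mu_x=m$ and $\mu_t=q$, i.e.
\[
\int_{x_1}^{x_2}q(x,t)\,dx=H(x_1,t)-H(x_2,t),\qquad \int_{t_1}^{t_2}2E(x,t)\,dt=H(x,t_2)-H(x,t_1).
\]
The algebraic heart of the matter is that, using the formulas for $q$ from Definition~\ref{def:qE}, every branch of $H$ splits off the term $-x\,q$:
\begin{align*}
H_1&=\int_0^{y_*}\rho_0u_0\,X(\eta,t)\,d\eta-x\,q,\qquad H_2=-\int_0^{\tau_*}\rho_{b_l}u_{b_l}^2\,Y(\eta,t)\,d\eta-x\,q,\\
H_3&=\int_0^1\rho_0u_0\,X(\eta,t)\,d\eta-\int_0^{\xi_*}\rho_{b_r}u_{b_r}^2\,Z(\eta,t)\,d\eta-x\,q.
\end{align*}

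On the \emph{regular set} --- the open region where exactly one of $F,G_{b_l},G_{b_r}$ is strictly the smallest and the two corresponding minimizers coincide --- I would differentiate these formulas with Leibniz' rule. Because $\rho_0,\rho_{b_l},\rho_{b_r}>0$, an interior minimizer obeys the first-order condition $x=y_*+t\,u_0(y_*)$ (resp.\ $x=u_{b_l}(\tau_*)(t-\tau_*)$, resp.\ $x-1=u_{b_r}(\xi_*)(t-\xi_*)$), so by Lemma~\ref{unique} the characteristic segment joining $(x,t)$ to the minimizer on the relevant boundary manifold is straight; hence $X(y_*(x,t),t)=x$, $Y(\tau_*(x,t),t)=x$, $Z(\xi_*(x,t),t)=x$. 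Consequently the moving-endpoint contribution produced by Leibniz' rule --- e.g.\ $\rho_0(y_*)u_0(y_*)X(y_*,t)\,\partial_\bullet y_*$ for $H_1$ --- equals $x\,\partial_\bullet q$ and exactly cancels the $x\,\partial_\bullet q$ coming from the $-x\,q$ summand. What remains in the $x$-variable is precisely $-q$; what remains in the $t$-variable is $\int_0^{y_*}\rho_0u_0\,\partial_tX(\eta,t)\,d\eta$ (resp.\ the $Y$- and $Z$-versions), which by $\partial_tX(\eta,t)=u(X(\eta,t),t)$, $\partial_tY(\eta,t)=u(Y(\eta,t),t)$, $\partial_tZ(\eta,t)=u(Z(\eta,t),t)$ from Lemma~\ref{lem:contcurves} equals $2E$ after comparison with Definition~\ref{def:qE}. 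If a minimizer sits at an endpoint of its range ($y_*\in\{0,1\}$, $\tau_*=0$, or $\xi_*=0$) the computation is unaffected: either the endpoint is locally stationary, or the whole branch of $H$, $q$, $E$ vanishes (for $y_*\equiv0$ or $\tau_*=0$) or reduces to the fixed full-interval expression (for $y_*\equiv1$ or $\xi_*=0$).

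To reach the whole interior, note first that the open regions on which $F\equiv G_{b_l}$ or $F\equiv G_{b_r}$ are, by Remark~\ref{rem2.9}, rarefactions from $(0,0)$ or $(1,0)$, where $H,q,E$ are constant in $x$ and the identities are trivial. The remaining exceptional set --- the shock regions and the isolated points where two potentials coincide --- meets each horizontal and each vertical line in a set of measure zero. Moreover $H(\cdot,t)$ and $H(x,\cdot)$ are locally Lipschitz: writing, for the $H_1$ branch,
\[
H_1(x',t)-H_1(x,t)=\int_{y_*(x,t)}^{y_*(x',t)}\rho_0u_0\,\bigl(X(\eta,t)-x'\bigr)\,d\eta-(x'-x)\,q(x,t),
\]
and using that $X(\eta,t)$ lies between $x$ and $x'$ for $\eta$ between $y_*(x,t)$ and $y_*(x',t)$, that the curves $X,Y,Z$ take values in $[0,1]$ and are Lipschitz in $t$ with a uniform constant (cf.\ the Lipschitz remark following Lemma~\ref{lem:curves}), and that the data are locally bounded, one obtains $|H(x',t)-H(x,t)|\le C|x'-x|$ and $|H(x,t')-H(x,t)|\le C|t'-t|$, with the $H_2$ and $H_3$ branches treated identically. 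Absolute continuity then upgrades the pointwise relations $H_x=-q$, $H_t=2E$ established on the regular set to the displayed integral identities on all of $\Omega^{o}$. (Alternatively one may follow Lemma~\ref{lem:integrals} verbatim, bounding $H(x',t)-H(x,t)$ and $H(x,t')-H(x,t)$ case by case on which potential is minimal at the two endpoints and taking Riemann sums; the Lipschitz route only avoids having to track the non-monotonicity of $q$ in $x$.)

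The step I expect to be the main obstacle is the bookkeeping at the interfaces $\{F=G_{b_l}\}$, $\{F=G_{b_r}\}$, $\{G_{b_l}=G_{b_r}\}$ and at the endpoint configurations: one must verify that the three formulas for $H$ glue continuously there. As for $m,q,E$, the compatibility of the defining cases forces $y_*,\tau_*,\xi_*$ to the extreme values $0$ or $1$ on those sets --- e.g.\ on $\{F=G_{b_l}\}$ one gets $\int_0^{y_*}\rho_0=-\int_0^{\tau_*}\rho_{b_l}u_{b_l}$, hence $y_*=\tau_*=0$ and $H_1=H_2=0$; on $\{F=G_{b_r}\}$ one gets $y_*=1,\ \xi_*=0$, so $H_1$ and $H_3$ both equal $\int_0^1\rho_0u_0(X(\eta,t)-x)\,d\eta$ --- so the gluing reduces to the same continuity arguments already carried out for $m$ in Lemma~\ref{l51} and \cite{NOSS22}. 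Once continuity is in hand, the remaining estimates are exactly the rearrangements used there, and combining them with the regular-set computation completes the proof.
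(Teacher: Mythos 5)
Your overall route is genuinely different from the paper's and its core is viable: the paper proves the integrated identity $-\int_{x_1}^{x_2}q\,dx=H(x_1,t)-H(x_2,t)$ by mimicking Lemma \ref{lem:integrals} --- it observes $H_3(x,t)=\min_{\xi\geq0}H_3(\xi,x,t)$, derives the two--sided bounds $-(x-x')q(x,t)\leq H(x,t)-H(x',t)\leq-(x-x')q(x',t)$ by a case analysis on which potential is minimal at the two points, and passes to Riemann sums, handling the coincidence sets by splitting the integral --- whereas you split off $-xq$ in each branch, cancel the moving--endpoint terms via $X(y_*(x,t),t)=x$, $Y(\tau_*(x,t),t)=x$, $Z(\xi_*(x,t),t)=x$, and upgrade the a.e.\ pointwise derivatives to the integral identities by Lipschitz continuity. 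That cancellation, and the identification of the remaining $t$-term with $2E$ through $\partial_tX(\eta,t)=u(X(\eta,t),t)$, are correct (and your difference--quotient estimate is the right substitute for the Leibniz rule, since $y_*,\tau_*,\xi_*$ are only monotone). However, there is a genuine gap at exactly the step you flag as the main obstacle: the gluing at the coincidence sets, on which your Lipschitz bound across branch changes depends.

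The claim that ``compatibility of the defining cases'' forces $y_*=\tau_*=0$ on $\{F=G_{b_l}\}$ and $y_*=1,\ \xi_*=0$ on $\{F=G_{b_r}\}$ is false at isolated coincidence points, which are precisely the shock curves carrying the Dirac masses. On $X_1(t)=\frac{a+\tilde b}{2}t$ in Figure \ref{right bd fig} one has $F=G_{b_l}$ with $y_*=x-at>0$ and $\tau_*=(\tilde bt-x)/\tilde b>0$; on $X_1(t)=\frac{a+b}{2}t+1$ in Figure \ref{left bd fig} one has $F=G_{b_r}$ with $y_*=x-at<1$ and $\xi_*>0$. Remark \ref{rem2.9} yields the extreme values of the minimizers only when the coincidence holds on an interval (the rarefaction regions), not at isolated points. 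Moreover, your argument implicitly assumes that the two branch formulas must agree at such a point; for $m$ this is exactly where they do \emph{not} agree (that disagreement is the concentrated mass), so there is no ``continuity argument for $m$'' in Lemma \ref{l51} to borrow. The continuity of $H$ across these shocks is nevertheless true, but for a different reason: all curves launched from $[y_*(x,t),1]$ on the initial manifold and from $[0,\xi^*(x,t)]$ on the right boundary (respectively from $[0,y^*(x,t)]$ and $[0,\tau^*(x,t)]$ at a shock where $F=G_{b_l}$) have already merged into the shock through $(x,t)$, so $X(\eta,t)=x$ and $Z(\eta,t)=x$ (resp.\ $Y(\eta,t)=x$) on those ranges and the two branch values of $H$ coincide; this absorption computation is exactly what the paper's proof carries out in its delicate cases. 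Until you replace the extreme--minimizer claim by this argument (and do the analogous check in the $t$-direction, where a shock crosses the vertical line $\{x\}$, for the $H_t=2E$ half), the interface step is unproved. A further small slip: in the rarefaction from $(1,0)$, $q\equiv\int_0^1\rho_0u_0$ need not vanish and $H$ is affine, not constant, in $x$ --- the identity $H_x=-q$ still holds there, but not for the reason you give.
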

\begin{proof}
We first show,
\begin{equation}\label{e3.24}
-\int_{x_1}^{x_2} q(x,t)dx= H(x_1,t)-H(x_2, t)\,.
\end{equation}
Let $t$ be fixed and $[x_1, x_2]$ be an interval. Assume that $x$ and $x^{\prime}$ be any two points in $[x_1,x_2]$ with $x<x^\prime.$ To prove the above identity it is enough to prove
\begin{equation}\label{e3.29}
 -(x- x^{\prime}) q(x, t)\leq H(x, t)-H(x^{\prime}, t) \leq -(x- x^{\prime}) q(x^{\prime}, t)\,.
\end{equation}
First, we consider the case where the potentials are not equal at $x$ or $x^{\prime}.$ Now, depending upon the minimizations through $F$, $G_{b_l}$ and $G_{b_r}$, one has the following possibilities.
\begin{equation*}
\begin{aligned}
&(a)\,\,H(x,t)=H_1(x,t),\,\, H(x^\prime,t)=H_1(x^\prime,t), &&(b)\,\,H(x,t)=H_2(x,t),\,\, H(x^\prime,t)=H_2(x^\prime,t),\\
&(c)\,\,H(x,t)=H_2(x,t),\,\, H(x^\prime,t)=H_1(x^\prime,t), &&(d)\,\,H(x,t)=H_2(x,t),\,\, H(x^\prime,t)=H_3(x^\prime,t),\\
&(e)\,\,H(x,t)=H_1(x,t),\,\, H(x^\prime,t)=H_3(x^\prime,t), &&(f)\,\,H(x,t)=H_3(x,t),\,\, H(x^\prime,t)=H_3(x^\prime,t).
\end{aligned}
\end{equation*}
For case $(a), (b), (c)$ the result holds true by arguments in \cite{WHD97} and \cite{NOSS22}. The proof of $(d),(e), (f),$ are similar to the proof of Lemma~\ref{lem:integrals}.

\noindent Let us define $$H_3(\xi, x, t)=-\int_0 ^ {\xi} \rho_{b_r} (\eta) u^2_{b_r} (\eta) (Z(\eta, t)-x) d\eta+ \int_0^1\rho_0(\eta)u_0(\eta)(Z(\eta,t)-x)d\eta.$$ Since $Z(\eta, t)$ is increasing in $\eta$  and $Z(\eta, t)=x$ for $\xi_{*}(x, t)< \eta< \xi^{*}(x, t)$, we have
\begin{equation}\label{e3.26}
H_3 (x,t)= \min_{\xi\geq 0} H_3 (\xi, x, t).
\end{equation}
From equation \eqref{e3.26}, it is evident that $H_3(x,t)$ is a continuous function. First, we present a proof of $(d).$ For $x$ and $x^{\prime}$ as defined above, we have
\begin{align*}
H(x, t)-H(x^{\prime}, t)&= H_2(x, t)-H_3(x^{\prime}, t)= H_2 (\tau_{*}(x,t), x, t)- H_3(\xi_{*}(x^{\prime},t),x^{\prime}, t)\\
&=[H_2 (\tau_{*}(x,t), x, t)-H_3 (\xi_{*}(x^{\prime},t), x, t)]+ [H_3 (\xi_{*}(x^{\prime},t), x, t)-H_3(\xi_{*}(x^{\prime},t),x^{\prime}, t)]\\
&\leq H_3 (\xi_{*}(x^{\prime},t), x, t)-H_3(\xi_{*}(x^{\prime},t),x^{\prime}, t) \,.
\end{align*}
On the other hand
\begin{align*}
H(x, t)-H(x^{\prime}, t)&= H_2(x, t)-H_3(x^{\prime}, t)= H_2 (\tau_{*}(x,t), x, t)- H_3(\xi_{*}(x^{\prime},t),x^{\prime}, t)\\
&=[H_2 (\tau_{*}(x,t), x, t)-H_2 (\tau_{*}(x,t), x^{\prime}, t)]+ [H_2 (\tau_{*}(x,t), x^{\prime}, t)-H_3(\xi_{*}(x^{\prime},t),x^{\prime}, t)]\\
&\geq H_2 (\tau_{*}(x,t), x, t)-H_2 (\tau_{*}(x,t), x^{\prime}, t) \,.
\end{align*}
Combining those two inequalities, one concludes the inequality in \eqref{e3.29}. Using \eqref{e3.29}, and taking the supremum of the Riemann sums over all partitions of the interval $[x_1, x_2]$, we deduce
\eqref{e3.24}. In the case of $(e),$ similarly, we can write
\begin{equation*}
\begin{aligned}
H(x,t)-H(x^{\prime},t)&=H_1(x,t)-H_3(x^{\prime},t)=H_1(y_*(x,t), x, t)-H_3(\xi_*(x^{\prime},t),x^{\prime}, t)\\
&=[H_1(y_*(x,t),x,t)-H_1(y_*(x,t),x^{\prime},t)]+[H_1(y_*(x,t), x^{\prime},t)-H_3(\xi_*(x^{\prime},t),x^{\prime},t)]
\end{aligned}
\end{equation*}
Since the term in the second bracket is positive, we conclude
\begin{equation}\label{case a+}
H(x,t)-H(x^{\prime},t)\geq H_1(y_*(x,t),x,t)-H_1(y_*(x,t),x^{\prime},t).
\end{equation}
On the other hand, we have
\begin{equation*}
\begin{aligned}
&H(x,t)-H(x^{\prime},t)=H_1(x,t)-H_3(x^{\prime},t)=H_1(y_*(x,t), x, t)-H_3(\xi_*(x^{\prime},t),x^{\prime}, t)\\
&=[H_1(y_*(x,t),x,t)-H_3(\xi_*(x^{\prime},t),x,t)]+[H_3(\xi_*(x^{\prime},t), x,t)-H_3(\xi_*(x^{\prime},t),x^{\prime},t)].
\end{aligned}
\end{equation*}
Since the term in the first bracket is negative we get
\begin{equation}\label{case a-}
H(x,t)-H(x^{\prime},t)\leq H_3(\xi_*(x^{\prime},t), x,t)-H_3(\xi_*(x^{\prime},t),x^{\prime},t).
\end{equation}
Combining the inequalities \eqref{case a+}-\eqref{case a-} and the definition of $q$, we get \eqref{e3.29}.
Now following the same argument as above we obtain \eqref{e3.24}.

Now we consider the delicate case where $F(x, t)=G_{b_l}(x, t)$ and $F(x^{\prime}, t)=G_{b_r}(x^{\prime}, t).$ Due to Lemma \ref{unique}, this implies $H(x_1, t)=H_2(x_1, t)$ and $H(x_2, t)=H_3(x_2, t).$ We observe that
\begin{align}\label{EQQ3.28}
    -\int_{x_1}^{x_2} q(x, t)\,dx&=-\int_{[x_1, x)} q(x, t)\,dx-\int_{[x, x^{\prime}]} q(x, t)\,dx-\int_{(x^{\prime}, x_2]} q(x, t)\,dx\nonumber\\
    &=-[H_2(x-, t)-H_2(x_1, t)]-[H_1(x^{\prime}, t)-H_1(x, t)]-[H_3(x_2, t)-H_3(x^{\prime}+, t)].
\end{align}
Note that, since $X(\eta, t)=x,$ for  $\eta \in [0, y^*(x, t)],$  it follows $H_1(x, t)=0.$
Also, since $Z(\eta, t)=x^{\prime}$ for $\eta \in [0, \xi^*(x^{\prime}, t)]$ and $X(\eta, t)=x^{\prime}$ for $\eta \in [y_*(x^{\prime}, t), 1],$ using $y_*(x^{\prime}+, t)=y^*(x^{\prime}, t),$ and $\xi_*(x^{\prime}+, t)=\xi^*(x^{\prime}, t),$ we get
\begin{align*}
    H_3(x^{\prime}+, t)&=\int_0^{\xi_*(x^{\prime}+, t)}\rho_{b_r}(\eta)u^2_{b_r}(\eta)(Z(\eta, t)-x^{\prime})\,d\eta+ \int_0^1\rho_0(\eta)u_0(\eta)(X(\eta, t)-x^{\prime})\,d\eta\\
    &=\int_0^{\xi_*(x^{\prime}+, t)}\rho_{b_r}(\eta)u^2_{b_r}(\eta)(Z(\eta, t)-x^{\prime})\,d\eta+ \int_0^{y_*(x^{\prime}+, t)}\rho_0(\eta)u_0(\eta)(X(\eta, t)-x^{\prime})\,d\eta\\
    &+ \int_{y_*(x^{\prime}+, t)}^1\rho_0(\eta)u_0(\eta)(X(\eta, t)-x^{\prime})\,d\eta\\
    &=\int_0^{\xi^*(x^{\prime}, t)}\rho_{b_r}(\eta)u^2_{b_r}(\eta)(Z(\eta, t)-x^{\prime})\,d\eta+\int_0^{y^*(x^{\prime}, t)}\rho_0(\eta)u_0(\eta)(X(\eta, t)-x^{\prime})\,d\eta\\
    &+\int_{y^*(x^{\prime}, t)}^1\rho_0(\eta)u_0(\eta)(X(\eta, t)-x^{\prime})\,d\eta=\int_0^{y^*(x^{\prime}, t)}\rho_0(\eta)u_0(\eta)(X(\eta, t)-x^{\prime})\,d\eta.
\end{align*}
Moreover, using $\tau_{*}(x-, t)=\tau^*(x, t)$ and $Y(\eta, t)=x$ for $\eta \in [0, \tau^*(x, t)],$ we have $H_2(x-, t)=0.$ Also, we note that
\begin{align*}
    H_1(x^{\prime}, t)-H_3(x^{\prime}+, t)=-\int_{y_*(x^{\prime}, t)}^{y^*(x^{\prime}, t)}\rho_0(\eta) u_0(\eta)(X(\eta, t)-x^{\prime})\,d\eta=0.
\end{align*}
Thus, from \eqref{EQQ3.28}, we obtain
\begin{align*}
    -\int_{x_1}^{x_2}q(x, t)\, dx=-[H_3(x_2,t)-H_2(x_1, t)]=H(x_1, t)-H(x_2, t).
\end{align*}
Next we consider the more complicated case where $F(x, t)=G_{b_l}(x, t)$ in an interval $[x^{\prime}_1, x]\subset [x_1, x_2]$ and $F(x, t)=G_{b_r}(x, t)$ in an interval $[x^{\prime}_2, x^{\prime}] \subset [x_1, x_2],$ with $x\leq x^{\prime}_2.$ Then, we have
\begin{align*}
    &-\int_{x_1}^{x_2}q(x, t)\,dx=-\int_{[x_1, x^{\prime}_1)}q(x, t)\,dx-\int_{[x^{\prime}_1, x]}q(x, t)\,dx-\int_{(x, x^{\prime}_2]}q(x, t)\,dx\\
    &-\int_{(x^{\prime}_2, x^{\prime}]}q(x, t)\,dx-\int_{(x^{\prime},x_2]}q(x, t)\,dx=[H_2(x_1, t)-H_2(x^{\prime}_1-, t)]+0+[H_1(x+, t)-H_3(x^{\prime}_2, t)]\\
    &+[H_1(x^{\prime}_2+, t)-H_3(x^{\prime}, t)]+[H_3(x^{\prime}+, t)-H_3(x_2, t)].
\end{align*}
By definition, $H_2(x^{\prime}_1-, t)=0=H_1(x+, t), $ $H_3(x^{\prime}_2, t)=H_1(x^{\prime}_2+, t)=\int_0^1 \rho_0 u_0 (X(\eta, t)-x^{\prime}_2)\,d\eta,$  $H_3(x^{\prime}+, t)=H_3(x^{\prime}, t)=\int_0^1\rho_0 u_0 (X(\eta, t)-x^{\prime})\,d\eta.$

Thus, we conclude
\begin{align*}
 -\int_{x_1}^{x_2}q(x, t)\,dx=H_2(x_1, t)-H_3(x_2, t)=H(x_1, t)-H(x_2, t).   
\end{align*}
The remaining cases where the potentials are equal over $[x_1, x_2]$ can be handled similarly.  Using analogous reasoning, it can also be shown that $H_t= 2 E$. This completes the proof.
\end{proof}
Now we show the second identity \eqref{wf2} of Definition \ref{intro-defn-weak formulation}. Again for a test function $\varphi$ with compact support in $(0, 1)\times (0, \infty)$, we have
\begin{align}\label{e3.30}
0=\int \int [H_x \varphi_{tx} (x, t)- H_t \varphi_{x x}(x, t)] dx dt&=\int \int [-q(x,t) \varphi_{t x}(x, t)- 2E \varphi_{x x}(x, t)] dx dt\nonumber\\
&=\int \int u (x,t) \varphi_t (x, t)dm dt +\int \int u^2(x,t)\varphi_x(x, t)dm dt\,.
\end{align}
The identity \eqref{e3.30} proves that $(\rho, u)$ satisfies the second equation of the system in the sense of Definition \ref{intro-defn-weak formulation}.

\subsection{Boundary condition and entropy admissibility}\label{sec:4}
In this subsection, we verify that the constructed solution satisfies the initial and boundary conditions. Furthermore, we show the entropy admissibility of the solution up to the boundary.
\subsubsection{Boundary condition} Our main focus is to verify the boundary conditions. The solution satisfies the boundary conditions in the following sense: when the net flux enters the domain from the boundary, the conditions are satisfied in its classical sense. In contrast, when the net flux exits the domain, the mass accumulates at the boundary, ensuring overall mass conservation.
\begin{defn}\label{def:sol}
Let $m$ be as in Definition~\ref{d3}. Then we define
\begin{equation*}
\rho(x,t)=\left\{\begin{aligned}
&\partial_x m&&\text{for } 0<x<1,\text{ and } t>0\\
&\lim_{x\searrow 0}\rho(x,t) &&\text{for }x=0\text{ and }\min\left\{F(0,t), G_{b_r}(0, t)\right\}>G_{b_l}(0,t),\\
&\delta\cdot\lim_{x\searrow 0}\left(m(x,t)-m(0,t)\right)&&\text{for }x=0 \text{
and }\min\left\{F(0, t), G_{b_r}(0,t)\right\}\leq G_{b_l}(0,t),\\
& \lim_{x\nearrow 1}\rho(x,t) &&\text{for }x=1\text{ and }\min\left\{F(1,t), G_{b_l}(1, t)\right\}>G_{b_r}(1,t),\\
&\delta\cdot\lim_{x\nearrow 1}\left(m(1,t)-m(x,t)\right)&&\text{for }x=1\text{ and }\min\left\{F(1,t), G_{b_l}(1, t)\right\}\leq G_{b_r}(1,t).
\end{aligned}\right.
\end{equation*}
Here $\delta$ is the Dirac measure, $\partial_x$ is the distributional derivative and $\rho$ is interpreted as a measure.
\end{defn}
\begin{lem}\label{thm: bd condition}
 The pair $(\rho,u)$ as defined above solves equation \eqref{e1.1} in $(0,1)\times (0, \infty)$.\\
The initial conditions are satisfied in the sense that for almost all $x$ we have $\lim_{t\searrow 0} u(x,t)= u_0(x)$ and  $\rho=\partial_{x}m$ with $\lim_{t\searrow 0}m(x,t) = \int_0^x \rho_0(y) dy$.\\
The boundary conditions are satisfied in regions where $\min\left\{F(0,t), G_{b_r}(0,t)\right\} > G_{b_l}(0,t)$ and\\ $\min\left\{F(1,t), G_{b_l}(1,t)\right\} > G_{b_r}(1,t)$ in the sense that for almost all $t$ we have $\lim_{x\searrow 0} u(x,t) = u_{b_l}(t)$ and $\lim_{x\nearrow 1} u(x,t) = u_{b_r}(t).$\\
In addition, if $u_{b_l}, u_{b_r}$ are in $C^1_{loc}$ and $\rho_{b_l}, \rho_{b_r}$ are locally Lipschitz continuous, then $\lim_{x\searrow 0} \rho(x,t)u(x,t) = \rho_{b_l}(t)u_{b_l}(t)$ and $\lim_{x\nearrow 1} \rho(x,t)u(x,t) = \rho_{b_r}(t)u_{b_r}(t).$  
\end{lem}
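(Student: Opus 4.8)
The plan is to dispatch the four assertions in turn, noting that the interior equation and the initial data are essentially already in hand, while the momentum boundary condition is where the new work lies. The statement that $(\rho,u)$ solves \eqref{e1.1} in $(0,1)\times(0,\infty)$ is already contained in the identities \eqref{e3.22} and \eqref{e3.30} from the previous subsection — which hold because the relevant double integrals vanish by equality of mixed partials of $\mu$, resp.\ $H$ — once one inserts $\mu_x=m$, $\mu_t=q$ (Lemma~\ref{lem:integrals}), $H_x=-q$, $H_t=2E$ (Lemma~\ref{lem: h function}) and $dq=u\,dm$, $dE=\tfrac12 u^2\,dm$ (Lemma~\ref{l51}); I would only recall this. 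For the initial data I would invoke Lemma~\ref{lnew}(4), which gives $y_*(x,0)=y^*(x,0)=x$, together with the semicontinuity of the minimisers (Lemma~\ref{lnew}(5)) and the monotonicity of the three potentials: for an interior $x$ the initial potential $F$ strictly dominates $G_{b_l}$ and $G_{b_r}$ for all small $t>0$, so $(x,t)$ lies in the first regime of Definitions~\ref{d3} and~\ref{d2} with $y_*(x,t),y^*(x,t)\to x$ as $t\searrow 0$; hence $m(x,t)=\int_0^{y_*(x,t)}\rho_0\to\int_0^x\rho_0$, and at a Lebesgue point $x$ of $u_0$ the sign of $\eta\mapsto(tu_0(\eta)+\eta-x)\rho_0(\eta)$ pins the interval $[y_*,y^*]$ near $x$ and forces $u(x,t)\to u_0(x)$. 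This is the argument of \cite[Lemma~3.1]{NOSS22} (going back to \cite{WHD97}), which I would cite rather than reproduce.

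Next, for the boundary velocities I would treat the left boundary in detail and obtain the right one by the symmetric substitution $x\leftrightarrow 1-x$, $(\tau_*,\tau^*,u_{b_l},\rho_{b_l},G_{b_l})\leftrightarrow(\xi_*,\xi^*,u_{b_r},\rho_{b_r},G_{b_r})$ with the accompanying sign changes. Fix $t$ with $\min\{F(0,t),G_{b_r}(0,t)\}>G_{b_l}(0,t)$. Since $F(\cdot,t)$ and $G_{b_r}(\cdot,t)$ are continuous and decreasing while $G_{b_l}(\cdot,t)$ is continuous and increasing, the strict inequality $G_{b_l}(x,t)<\min\{F(x,t),G_{b_r}(x,t)\}$ persists for all small $x>0$, so $(x,t)$ falls in the $G_{b_l}$-regime of Definition~\ref{d2}, and by Lemma~\ref{lnew}(1),(4),(5) one has $\tau_*(x,t)\le\tau^*(x,t)\nearrow t$ as $x\searrow 0$. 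At a minimiser the sign of $\partial_\tau G_{b_l}(\tau,x,t)=\bigl[x-u_{b_l}(\tau)(t-\tau)\bigr]\rho_{b_l}(\tau)u_{b_l}(\tau)$ together with the positivity of $\rho_{b_l}u_{b_l}$ gives $x<u_{b_l}(\tau)(t-\tau)$ for a.e.\ $\tau<\tau_*(x,t)$ and $x>u_{b_l}(\tau)(t-\tau)$ for a.e.\ $\tau>\tau^*(x,t)$. Feeding these pointwise inequalities into the expression for $u(x,t)$ from Definition~\ref{d2} — which for $\tau_*<\tau^*$ is a $\rho_{b_l}u_{b_l}$-weighted average of $u_{b_l}$ over the shrinking interval $[\tau_*,\tau^*]$, and for $\tau_*=\tau^*$ is $x/(t-\tau_*)$, to be squeezed by integrating the inequalities over one-sided intervals of length proportional to $t-\tau_*$ — and using that $t$ is a Lebesgue point of $u_{b_l}$, so that averages of $u_{b_l}$ over intervals shrinking to $t$ converge to $u_{b_l}(t)$, one concludes $u(x,t)\to u_{b_l}(t)$. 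This is the argument of \cite{NOSS22} for the left boundary, and the right boundary is its mirror image.

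Finally, the momentum boundary condition is the heart of the matter and the step I expect to be the main obstacle. Since $dq=u\,dm$ in $x$ (Lemma~\ref{l51}), one has $\rho u=\partial_x q$ as distributions, so it suffices to identify $\lim_{x\searrow 0}\partial_x q(x,t)$, where $q(x,t)=-\int_0^{\tau_*(x,t)}\rho_{b_l}u_{b_l}^2$ for $x$ near $0$. The crucial point is to promote the soft convergence $\tau_*(x,t)\to t$ to a \emph{quantitative} and \emph{differentiable} description of the minimiser, and this is exactly where the $C^1$ hypothesis enters: the map $\tau\mapsto u_{b_l}(\tau)(t-\tau)$ has derivative $u_{b_l}'(\tau)(t-\tau)-u_{b_l}(\tau)$, which equals $-u_{b_l}(t)<0$ at $\tau=t$, hence is strictly decreasing on a one-sided neighbourhood $(t-\eta_0,t]$ and vanishes at $\tau=t$; together with the sign analysis of $\partial_\tau G_{b_l}$ above this forces, for $x$ small, $\tau_*(x,t)=\tau^*(x,t)$ with this common value $\tau(x)$ the unique solution of $u_{b_l}(\tau(x))(t-\tau(x))=x$ in $(t-\eta_0,t)$, and the implicit function theorem makes $\tau(\cdot)$ of class $C^1$ with $\tau'(x)=\bigl(u_{b_l}'(\tau(x))(t-\tau(x))-u_{b_l}(\tau(x))\bigr)^{-1}\to -1/u_{b_l}(t)$ as $x\searrow 0$. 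In particular $m(\cdot,t)$ and $q(\cdot,t)$ are $C^1$ near $x=0$, so their distributional $x$-derivatives are the classical ones and
\[
\rho(x,t)u(x,t)=\partial_x q(x,t)=-\rho_{b_l}(\tau(x))\,u_{b_l}^2(\tau(x))\,\tau'(x)\ \xrightarrow[x\searrow 0]{}\ -\rho_{b_l}(t)\,u_{b_l}^2(t)\bigl(-1/u_{b_l}(t)\bigr)=\rho_{b_l}(t)u_{b_l}(t),
\]
using continuity of $\rho_{b_l}$ (local Lipschitzness being more than enough) and $\tau(x)\to t$; likewise $\rho(x,t)=\partial_x m(x,t)=-\rho_{b_l}(\tau(x))u_{b_l}(\tau(x))\tau'(x)\to\rho_{b_l}(t)$, which is the value assigned to $\rho(0,t)$ by Definition~\ref{def:sol}. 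The right boundary is handled symmetrically: for $x$ near $1$, $q(x,t)=-\int_0^{\xi_*(x,t)}\rho_{b_r}u_{b_r}^2+\int_0^1\rho_0u_0$, the implicit relation is $u_{b_r}(\xi(x))(t-\xi(x))=x-1$ with $\xi'(x)\to -1/u_{b_r}(t)$, and — the signs again matching because $u_{b_r}(t)<0$ — one obtains $\rho(x,t)u(x,t)=\partial_x q(x,t)\to\rho_{b_r}(t)u_{b_r}(t)$ as $x\nearrow 1$. Throughout, the one genuinely delicate point is precisely this passage from $\tau_*\to t$ (resp.\ $\xi_*\to t$) to a $C^1$ parametrisation of the minimiser near the boundary; everything else is bookkeeping of the cases in Definitions~\ref{d2}, \ref{d3} and~\ref{def:qE}.
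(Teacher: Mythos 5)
Your proposal is correct and follows essentially the same route as the paper: cite \cite{WHD97,NOSS22} for the interior equation, initial data and one boundary, use the Lebesgue-point/shrinking-average argument for the velocity trace, and obtain the momentum trace via the first-order condition for the minimizer plus the implicit function theorem, yielding $\tau'(x)\to -1/u_{b_l}(t)$ (resp.\ $\xi'(x)\to -1/u_{b_r}(t)$). The only cosmetic differences are that the paper writes out the right boundary (the new case) and differentiates $m$, combining $\rho\to\rho_{b_r}(t)$ with the already-proved velocity limit, whereas you detail the left boundary and differentiate $q$ directly; these are equivalent.
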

\begin{proof}
The validity of the initial condition and the left boundary condition is proved in \cite{WHD97} and \cite{NOSS22}. We will only check the right boundary condition. Notice that, from our assumption $\min\left\{F(1, t), G_{b_l}(1, t)\right\}>G_{b_r}(1, t)$ and the continuity of $F, G_{b_l}$ and $G_{b_r},$ we have $\min\left\{F(x, t), G_{b_l}(x, t)\right\}>G_{b_r}(x, t)$ holds for $x$ close to the boundary $\{x=1\}.$ Let $t_0$ be any Lebesgue point of $\rho_{b_r}$ and $u_{b_r}.$ First, we show that the boundary condition holds for $u,$ i.e. $\lim_{x \to 1}u(x, t_0)=u_{b_r}(t_0).$ Let $(x_n, t_0) \to (1, t_0).$ First, we assume that $\xi_{*}(x_n, t_0)=\xi^{*}(x_n, t_0)=\xi(x_n, t_0).$ In this case, the minimum is unique and for any $h \neq 0,$ we have
\begin{align*}
    G_{b_r}\left(\xi(x_n, t_0), x_n, t_0\right)+F(1, x_n, t_0)< G_{b_r}\left(\xi(x_n, t_0)+h(x_n-1), x_n, t_0\right)+F(1, x_n, t_0).
\end{align*}
Simplification of the above inequality gives,
\begin{align*}
    &\int_{\xi(x_n, t_0)}^{\xi(x_n, t_0)+h(x_n-1)}(x_n-1)u_{b_r}(\eta)\rho_{b_r}(\eta)\,d\eta\\
    &> \int_{\xi(x_n, t_0)}^{\xi(x_n, t_0)+h(x_n-1)} (t_0-\eta)u^2_{b_r}(\eta)\rho_{b_r}(\eta)\, d\eta\\
    &\geq\left(t_0-\xi(x_n, t_0)-h(x_n-1)\right)\int_{\xi(x_n, t_0)}^{\xi(x_n, t_0)+h(x_n-1)} u^2_{b_r}(\eta)\rho_{b_r}(\eta)\,d\eta.
\end{align*}
From the Definition \ref{d2}, we have
\begin{align*}
    \left(\frac{1}{u(x_n, t_0)}-h\right)\frac{\int_{\xi(x_n, t_0)}^{\xi(x_n, t_0)+h(x_n-1)}u^2_{b_r}(\eta)\rho_{b_r}(\eta)\,d\eta}{\int_{\xi(x_n, t_0)}^{\xi(x_n, t_0)+h(x_n-1)}u_{b_r}(\eta)\rho_{b_r}(\eta)\,d\eta} \leq 1,
\end{align*}
and passing to the limit $n \to \infty,$ we get
\begin{align}\label{EQQ4.2}
    \lim \sup_{n \to \infty}\left(\frac{1}{u(x_n, t_0)}-h\right)u_{b_r}(t_0)\leq 1.
\end{align}
Similarly, considering the inequality
\begin{align*}
G_{b_r}\left(\xi(x_n, t_0), x_n, t_0\right)< G_{b_r}\left(\xi(x_n, t_0)-h(x_n-1), x_n, t_0\right),  
\end{align*}
we obtain
\begin{align}\label{EQQ4.3}
    \lim \inf_{n \to \infty}\left(\frac{1}{u(x_n, t_0)}+h\right)u_{b_r}(t_0)\geq 1.
\end{align}
Since $h$ is arbitrary, combining the inequalities \eqref{EQQ4.2}-\eqref{EQQ4.3}, we get
\begin{align*}
    \lim_{n \to \infty}\frac{u_{b_r}(t_0)}{u(x_n, t_0)}=1,
\end{align*}
and this completes the proof of $\lim_{n \to \infty}u(x_n, t_0)=u_{b_r}(t_0).$

Next, we consider the case $\xi_{*}(x_n, t_0)< \xi^{*}(x_n, t_0).$ By Definition \ref{d2}, we have
\begin{align*}
    u(x_n, t_0)=\frac{\int_{\xi_*(x_n, t_0)}^{\xi^*(x_n, t_0)}\rho_{b_r}u^2_{b_r}\, d\eta}{\int_{\xi_*(x_n, t_0)}^{\xi^*(x_n, t_0)}\rho_{b_r}u_{b_r}\, d\eta}.
\end{align*}
Since $t_0$ is the Lebesgue point of $\rho_{b_r}u^2_{b_r}$ and $\rho_{b_r}u_{b_r},$ we find $\lim_{n\to \infty}u(x_n, t_0)=u_{b_r}(t_0).$

It remains to show that the right boundary condition for $\rho$ holds. In this case, if the boundary potential $$G_{b_r}(\xi,x,t)=\int_{0}^{\xi} [x-1-u_b(\eta)(t-\eta)]\rho_b(\eta)u_b(\eta)d\eta+ F(1, x, t)$$ attains the minimum in $(0,\infty)$ at a point $\bar{\xi}$, then $\frac{\partial G_{b_r}}{\partial \tau}\Big|_{\xi=\bar{\xi}}=0.$ That is,
\[
\frac{\partial G_{b_r}}{\partial \tau}\Big|_{\xi=\bar{\xi}}=\left(x-1-u_{b_r}(\bar{\xi})(t-\bar{\xi})\right)\rho_{b_r}(\bar{\xi})u_{b_r}(\bar{\xi})=0\,.
\]
Since $\xi_{*}(x,t), \xi^{*}(x,t)\nearrow t$  as $x \nearrow 1$,  for $x$ close enough to $1$, the minimizing point lies in $(0, \infty)$.
Consider the function $g:\mathbb{R^+}\times[0,\infty)\to\mathbb{R}$ where $g$ is defined as
\[
g(x,\xi)=x-1-u_{b_r}(\xi)(t-\xi)\,.
\]
Then $\frac{\partial g}{\partial \xi}=-(t-\xi) u_{b_r}^{\prime}(\xi)+u_{b_r}(\xi)$, $g(1,t)=0$ and $\frac{\partial g}{\partial \xi}(1,t)<0.$ Thus by the implicit function theorem there exists a neighborhood of $(1,t)$ where $g(x,\xi)=0$ has a unique solution. That gives the unique minimizer of the boundary potential $G_{b_r}(\xi,x,t)$ and thus $\xi_*(x,t)$ is a continuously differentiable function of $x$ in some neighborhood of $x=1$. Now from the relation $u(x,t)=\frac{x-1}{t-\xi_*(x,t)}$, we see that $\lim_{x\nearrow 1}\frac{\partial}{\partial x}\xi_*(x,t)=-\frac{1}{u_{b_r}(t)}.$
Consequently,
\begin{equation*}
\lim_{x\nearrow 1}\rho(x,t)=\lim_{x\nearrow 1}\frac{\partial}{\partial x}m(x,t)
=-\lim_{x\nearrow 1}\rho_{b_r}(\xi_*(x,t))u_{b_r}(\xi_*(x,t))\frac{\partial}{\partial x}\xi_*(x,t)
=\rho_{b_r}(t)\,.
\end{equation*}
This completes the verification of the initial and boundary condition for $(\rho, u)$.
\end{proof}
\subsubsection{Entropy condition} We show that the solution satisfies the Oleinik entropy condition in $\Omega.$
\begin{lem}\label{thm: entropy}
    Let $0<x<1,$ and $t>0.$ If $x$ is a point of discontinuity of $u(\cdot, t),$ then we have
    \begin{align*}
        u(x-, t)>u(x, t)> u(x+, t).
    \end{align*}
    Moreover, for almost all $t>0, $ we have the following:
    \begin{enumerate}
        \item [(i)] $u(\cdot, t)$ is either right continuous at $x=0$ or $u(0, t)>u(0+, t),$
        \item[(ii)] $u(\cdot, t)$ is either left continuous at $x=1$ or $u(1-, t)>u(1, t).$ 
    \end{enumerate}
\end{lem}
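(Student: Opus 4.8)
The plan is to exploit the geometry of the characteristic triangles established in the previous section, together with the monotonicity and semicontinuity properties of the minimizers in Lemma~\ref{lnew} and the uniqueness-along-segments property of Lemma~\ref{unique}. First I would treat the interior jump $0<x<1$. Since $F(\cdot,t)$ and $G_{b_r}(\cdot,t)$ are monotonically decreasing in $x$ while $G_{b_l}(\cdot,t)$ is monotonically increasing, at a point of discontinuity $x$ the three functions $F$, $G_{b_l}$, $G_{b_r}$ each have one-sided limits, and one checks that the relevant minimizers satisfy $y^{*}(x-,t)\le y_{*}(x,t)\le y^{*}(x,t)\le y_{*}(x+,t)$ (and the analogous chains for $\tau$ and $\xi$) by part (3) of Lemma~\ref{lnew} and its counterparts. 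Because $u(x\pm,t)$ is, in each case of Definition~\ref{d2}, a weighted average of the slopes $\frac{x-\eta}{t}$ (resp. $\frac{x}{t-\eta}$, $\frac{x-1}{t-\eta}$) over the corresponding minimizer interval, and these slopes are themselves monotone in $\eta$, the jump inequality $u(x-,t)>u(x,t)>u(x+,t)$ reduces to: (a) a strict ordering of the minimizer intervals across the jump, which follows from Lemma~\ref{unique} (two characteristic segments emanating from $(x,t)$ with different endpoints cannot both be minimizing unless they coincide, so a genuine jump forces the left-hand minimizer interval to lie strictly to the left of the right-hand one); and (b) the observation that the slope function is strictly monotone in the endpoint, so averaging over a strictly-left interval gives a strictly larger value. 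The transitions between different branches of Definition~\ref{d2} (e.g. from the $F$-branch to the $G_{b_r}$-branch across the jump) are handled exactly as in the computations of Lemma~\ref{l51}, where we already showed the Radon--Nikodym quotient $\frac{dq}{dm}$ equals $u$ and is continuous from the appropriate side.

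Next I would handle the boundary cases (i) and (ii), which are symmetric, so I describe (ii) at $x=1$. For almost all $t>0$ the point $(1,t)$ satisfies $\xi_{*}(1,t)=\xi^{*}(1,t)=t$ by part (4) of Lemma~\ref{lnew}, so $u(1,t)=u_{b_r}(t)$ (or $u(1,t)=0$, depending on which potential is smallest — but in the latter case $\min\{F(1,t),G_{b_l}(1,t)\}\le G_{b_r}(1,t)$, and then the inequality $u(1-,t)>u(1,t)=0$ is immediate because $u(1-,t)$ is an average of slopes $\frac{x-1}{t-\xi}$ with $x$ slightly less than $1$ and $\xi<t$, hence positive divided by positive... wait, $x-1<0$, so this needs care — actually $u(1-,t)$ must be examined through whichever branch is active near $x=1$, and one argues from the monotonicity of the potentials near the boundary that $u(1-,t)\ge 0$ with equality only in the continuous case). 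More precisely: either $u(\cdot,t)$ is left-continuous at $1$, in which case there is nothing to prove, or it is not, and then I would show $u(1-,t)>u(1,t)$ by comparing the minimizer $\xi^{*}(1-,t)$ with $\xi_{*}(1,t)=t$: by upper semicontinuity $\xi^{*}(1-,t)\le t$, and if $\xi^{*}(1-,t)=t$ as well then Lemma~\ref{unique} forces the minimizer to be locally constant, giving continuity — a contradiction; hence $\xi^{*}(1-,t)<t$ strictly, and plugging into the averaged-slope formula for $u(1-,t)$ and comparing with $u_{b_r}(t)=\frac{x-1}{t-\xi}\big|_{\xi\to t}$ yields the strict inequality. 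The "almost all $t$" qualifier enters because we need $t$ to be a Lebesgue point of $\rho_{b_r}u_{b_r}$ and $\rho_{b_r}u_{b_r}^{2}$ so that the averaged formulas pass to the limit, and we need $\xi_{*}(1,t)=\xi^{*}(1,t)$, which holds for every $t>0$ by Lemma~\ref{lnew}(4).

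I expect the main obstacle to be the careful bookkeeping at the branch transitions in the interior case — that is, when $x$ is a jump point at which the active potential switches (say $F(x-,t)<\min\{G_{b_l},G_{b_r}\}$ on the left but $G_{b_r}(x+,t)<\min\{F,G_{b_l}\}$ on the right, or a jump that straddles one of the coincidence sets $\{F=G_{b_l}\}$, $\{F=G_{b_r}\}$, $\{G_{b_l}=G_{b_r}\}$). In those cases $u(x-,t)$ and $u(x+,t)$ are averages over \emph{different} families of data (initial data versus right-boundary data), and one must verify the strict ordering by threading the characteristic segments through the common point $(x,t)$ and invoking Lemma~\ref{unique} to rule out the degenerate equality; Remark~\ref{rem2.9}, which shows $\{G_{b_l}=G_{b_r}\}$ and (generically) $\{F=G_{b_r}\}$ reduce to isolated points or to rarefaction regions with vanishing mass, will be used to dispose of the coincidence-set cases, since in a rarefaction region $u(\cdot,t)$ is continuous (indeed affine) and there is no jump to analyze. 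Once the segment-geometry is set up, the inequalities themselves are the same monotone-average argument used throughout Section~\ref{sec:2}, so the essential content is the case analysis rather than any new estimate.
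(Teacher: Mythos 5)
There is a genuine gap at the heart of your interior argument. At a jump point where, say, the $G_{b_r}$-branch is active with $\xi_*(x,t)<\xi^*(x,t)$, the value $u(x,t)$ from Definition \ref{d2} is \emph{not} an average of the characteristic slopes $\frac{x-1}{t-\eta}$ over the minimizer interval; it is the quotient $\int_{\xi_*}^{\xi^*}\rho_{b_r}u_{b_r}^2\big/\int_{\xi_*}^{\xi^*}\rho_{b_r}u_{b_r}$, i.e.\ a weighted average of the boundary velocities $u_{b_r}(\eta)$, which for an individual $\eta$ bear no pointwise relation to the slope $\frac{x-1}{t-\eta}$. Hence your step (b) ("the slope is monotone in the endpoint, so averaging over a strictly-left interval gives a strictly larger value") does not yield $u(x-,t)>u(x,t)>u(x+,t)$. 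The inequality really comes from the minimization identity $G_{b_r}(\xi_*(x,t),x,t)=G_{b_r}(\xi^*(x,t),x,t)$, i.e.\ $\int_{\xi_*}^{\xi^*}[x-1-u_{b_r}(\eta)(t-\eta)]\rho_{b_r}u_{b_r}\,d\eta=0$, which after dividing by $t-\xi_*$ (resp.\ $t-\xi^*$) and using $u_{b_r}<0$ compares the averaged value with the two extreme slopes; this quantitative identity is the missing ingredient, and Lemma \ref{lnew}/Lemma \ref{unique} ordering of minimizers alone cannot replace it. The same problem recurs in the coincidence cases $F=G_{b_r}$ and $G_{b_l}=G_{b_r}$: there the paper does not "handle them as in Lemma \ref{l51}" but compares the potential differences at $(x,t)$ with the same differences evaluated at shifted base points such as $(1,\xi^*(x,t))$, $(y_*(x,t),0)$ or $(0,\tau^*(x,t))$, and then divides by $t-\xi^*$, $t$, or $t-\tau^*$, keeping track of the signs of the denominators (e.g.\ $\int_0^{y^*}\rho_0+\int_0^{\xi^*}u_{b_r}\rho_{b_r}-\int_0^1\rho_0<0$). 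Your plan only invokes Lemma \ref{unique} to "rule out degenerate equality," which orders the minimizers but does not produce these strict inequalities between the mixed averages of Definition \ref{d2} and the one-sided limits.

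The boundary case (ii) is also misdirected. The only nontrivial situation is $\min\{F(1,t),G_{b_l}(1,t)\}<G_{b_r}(1,t)$, where $u(1,t)=0$ by definition; but then, by continuity of the potentials, it is the $F$- or $G_{b_l}$-branch that is active for $x$ near $1$, so your comparison of $\xi^*(1-,t)$ with $\xi_*(1,t)=t$ concerns a minimizer that does not even enter the formula for $u(1-,t)$. What is actually needed (and what the paper does) is to show $u(1-,t)>0$ through the active branch: in the $G_{b_l}$-case via the identity $\int_{\tau_*}^{\tau^*}(x-u_{b_l}(\eta)(t-\eta))u_{b_l}\rho_{b_l}\,d\eta=0$, giving $(t-\tau^*(1-,t))u(1-,t)\ge 1$, and in the $F$-case via $\int_{y_*}^{y^*}(tu_0+\eta-x)\rho_0\,d\eta=0$ together with the exclusion of $y_*(1,t)=1$, which requires a separate contradiction argument with $F(1,t)<G_{b_r}(1,t)$. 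None of these steps appears in your sketch, so as written the proposal would not close either the interior or the right-boundary part of the lemma.
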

\begin{proof}
    We consider the points $(x, t),$ where $G_{b_r}(x, t)< \min\left\{F(x, t), G_{b_l}(x, t)\right\}$ and we have $\xi_*(x, t)<\xi^*(x, t).$ Note that, using the semicontinuity of $\xi(x, t),$ we also have $u(x-, t)=\frac{x-1}{t-\xi_*(x, t)}$ and $u(x+, t)=\frac{x-1}{t-\xi^*(x,t)}.$ From
\[
G_{b_r}(\xi_{*}(x,t), x, t)=G_{b_r}(\xi^{*}(x,t), x, t)\,,
\]
we know that
\[
\int_{\xi_{*}(x,t)}^{\xi^{*}(x,t)} [x-1-u_{b_r}(\eta)(t-\eta)]\rho_{b_r}(\eta)u_{b_r}(\eta)d\eta=0\,.
\]
This implies
\[
 \frac{x-1}{t-\xi_{*}(x,t)}\int_{\xi_{*}(x,t)}^{\xi^{*}(x,t)} \rho_{b_r}(\eta)u_{b_r}(\eta) d\eta=\int_{\xi_{*}(x,t)}^{\xi^{*}(x,t)} \frac{(t-\eta)} {t-\xi_{*}(x,t)} u^2_{b_r}(\eta) \rho_{b_r}(\eta) d\eta.
\]
Since $u_{b_r}<0,$ we can conclude $u(x-, t) > u(x,t)$. The other inequality also follows easily by dividing the above equality by $(t-\xi^{*}(x,t))$.

Next we consider the case $F(x, t)=G_{b_r}(x, t).$ If this happens in a region, then the solution is continuous and corresponds to a rarefaction wave from $(1, 0),$ see Remark \ref{rem2.9}.  Now we assume that the equality holds in the isolated points. We note that
\begin{align*}
    F(y^*(x, t), x, t)-G_{b_r}(\xi^*(x, t), x, t)< F(y^*(x, t), 1, \xi^*(x, t))-G_{b_r}(\xi^*(x, t), 1, \xi^*(x, t)).
\end{align*}
Simplifying the above inequality, we get
\begin{align*}
    \int_0^{y^*(x, t)}\left[(t-\xi^*(x,t))u_0(\eta)+(1-x)\right]\rho_0(\eta)\, d\eta&< \int_0^{\xi^*(x,t)}\left[x-1-u_{b_r}(t-\xi^*(x,t))\right]u_{b_r}(\eta)\rho_{b_r}(\eta)\,d\eta\\
    &+ \int_0^1 \left[(t-\xi^*(x,t))u_0(\eta)+(1-x)\right]\rho_0(\eta)\,d\eta.
\end{align*}
Dividing the above inequality by $t-\xi^*(x,t)$ in both side, we get
\[u(x, t)> \frac{x-1}{t-\xi^*(x,t)}:= u(x+, t).\]
Note that, to obtain the above inequality, we also used
\begin{align*}
    \int_0^{y^*(x, t)}\rho_0(\eta)\,d\eta+\int_0^{\xi^*(x,t)}u_{b_r}(\eta)\rho_{b_r}(\eta)\,d\eta-\int_0^1 \rho_0(\eta)\,d\eta < 0.
\end{align*}
To get the other way inequality, i.e., $\frac{x-y_*(x,t)}{t}=:u(x-, t)> u(x, t),$ we use
\begin{align*} 
G_{b_r}(\xi_{*} (x,t), x, t)- F(y_{*}(x,t),x, t)<G_{b_r}(\xi_{*}(x,t),y_{*} (x,t), 0)- F(y_{*}(x,t),y_{*}(x,t),0),
\end{align*}
and proceed similarly.

Now we consider the case $G_{b_l}(x,t)=G_{b_r}(x,t).$ By Remark \ref{rem2.9}, this can happen only at isolated points. In this case, we have $u(x+, t)=\frac{x-1}{t-\xi^*(x, t)}$ and $u(x-, t)=\frac{x}{t-\tau^*(x, t)}.$ Consider the inequality
\begin{align*}
    G_{b_l}(\tau^*(x, t), x, t)-G_{b_r}(\xi^*(x, t), x, t)< G_{b_l}(\tau^*(x, t), 1, \xi^*(x, t))-G_{b_r}(\xi^*(x, t), 1, \xi^*(x, t)).
\end{align*}
Simplifying the above inequality, we get
\begin{align*}
    \frac{x-1}{t-\xi^*(x,t)}\left[\int_0^{\tau^*(x, t)}u_{b_l}(\eta)\rho_{u_l}(\eta)\,d\eta-\int_0^{\xi^*(x, t)}u_{b_r}(\eta)\rho_{b_r}(\eta)\,d\eta+\int_0^1 \rho_0(\eta)\,d\eta\right]\\
    < \left[\int_0^{\tau^*(x, t)}u^2_{b_l}(\eta)\rho_{u_l}(\eta)\,d\eta-\int_0^{\xi^*(x, t)}u^2_{b_r}(\eta)\rho_{b_r}(\eta)\,d\eta+\int_0^1 u_0(\eta)\rho_0(\eta)\,d\eta\right]
\end{align*}
and this gives
\begin{align*}
    u(x+, t)< u(x, t).
\end{align*}
For the other inequality, we consider
\begin{align*}
    G_{b_r}(\xi^*(x, t), x, t)-G_{b_l}(\tau^*(x, t), x, t)< G_{b_r}(\xi^*(x, t), 0, \tau^*(x, t))-G_{b_l}(\tau^*, 0, \tau^*(x, t)).
\end{align*}
Similarly as above, simplifying the above, we obtain
\begin{align*}
    \frac{x}{t-\tau^*(x,t)}\left[\int_0^{\tau^*(x, t)}u_{b_l}(\eta)\rho_{b_l}(\eta)\, d\eta-\int_0^{\xi^*(x, t)}u_{b_r}(\eta)\rho_{b_r}(\eta)\,d\eta+\int_0^1 \rho_0(\eta)\,d\eta\right]\\
    > \left[\int_0^{\tau^*(x, t)}u^2_{b_l}(\eta)\rho_{b_l}(\eta)\,d\eta-\int_0^{\xi^*(x, t)}u^2_{b_r}(\eta)\rho_{b_r}(\eta)\,d\eta+\int_0^1 u_0(\eta)\rho_0(\eta)\,d\eta\right]
\end{align*}
which implies $u(x-, t)> u(x,t).$ This completes the proof of the interior case.

The first case $(i)$ is shown in \cite{NOSS22}. Now we show $(ii).$ If $\min \{F(1, t), G_{b_l}(1, t)\}>G_{b_r}(1, t),$ then the right continuity follows from Lemma \ref{thm: bd condition}. Next, consider the case $\min \{F(1, t), G_{b_l}(1, t)\}\leq G_{b_r}(1, t).$ Let $\min \{F(1, t), G_{b_l}(1, t)\}=G_{b_l}(1, t)< G_{b_r}(1, t).$  We note that, from the continuity of $G_{b_l}$ and $G_{b_r},$ it can be concluded that in a neighborhood of $(1, t),$ the inequality $G_{b_l}(x, t)< G_{b_r}(x, t)$ holds.

From the Definition \ref{d2}, in this neighborhood $u$ is given by
\begin{align*}
    u(x, t)=\begin{cases}
        \frac{x}{t-\tau^*(x, t)}\,\,\,\,\, &\text{if}\,\,\, \tau^*(x, t)=\tau_*(x, t),\\
        \frac{\int_{\tau_*(x, t)}^{\tau^*(x, t)}\rho_{b_l}u^2_{b_l}\, d\eta}{\int_{\tau_*(x, t)}^{\tau^*(x, t)}\rho_{b_l}u_{b_l}\, d\eta}\,\,\,\,\, &\text{if}\,\,\, \tau^*(x, t)<t_*(x, t).
    \end{cases}
\end{align*}
In the first case, $u(1-, t)=\frac{1}{t-\tau^*(1-, t)}>0.$ In the second case, we have
\begin{align*}
    \int_{\tau_*(x, t)}^{\tau^*(x, t)}\left(x-u_{b_l}(\eta)(t-\eta)\right)u_{b_l}(\eta)\rho_{b_l}(\eta)\, d\eta=0.
\end{align*}
From this, we get
\begin{align*}
    t u(x, t) \geq x+ \tau^*(x, t)\frac{\int_{\tau_*(x, t)}^{\tau^*(x, t)}u^2_{b_l}\rho_{b_l}\,d\eta}{\int_{\tau_*(x, t)}^{\tau^*(x, t)}u_{b_l}\rho_{b_l}\,d\eta} =x+ \tau^*(x, t)u(x, t).
\end{align*}
Passing to the limit $x\nearrow 1,$ we obtain $(t-\tau^*(1-, t))u(1-, t) \geq 1,$ which gives $u(1-, t)>0.$ Therefore, from the Definition \ref{d2}, we conclude $u(1-, t)> u(1, t).$ 

If $\min\{F(1, t), G_{b_l}(1, t)\}=F(1, t),$ again by the continuity of $F, G_{b_r},$ it follows that in a neighborhood of $(1, t),$ we have $F(x, t)<G_{b_r}(x, t).$ Now we can follow a similar argument as above, indeed we have,
\begin{align*}
    u(x, t)=\begin{cases}
        \frac{x-y_*(x, t)}{t}\,\,\,\,\, &\text{if}\,\,\, y^*(x, t)=y_*(x, t),\\
        \frac{\int_{y_*(x, t)}^{y^*(x, t)}\rho_0u_0\, d\eta}{\int_{y_*(x, t)}^{y^*(x, t)}\rho_{0}\, d\eta}\,\,\,\,\, &\text{if}\,\,\, y^*(x, t)<y_*(x, t).
    \end{cases}
\end{align*}
In the first case, passing to the limit $x\nearrow 1,$ we get, $u(1-, t)=\frac{1-y_*(1, t)}{t}$ which is strictly positive except $y_*(1, t)=1.$ Suppose, $y_*(1, t)=1,$ then we have
\begin{align*}
    F(1, t)=F(1, 1, t)\geq \int_0^{\xi}-(t-\eta)u^2_{b_r}(\eta)\rho_{b_r}(\eta)\,d\eta+F(1,1,t)=G_{b_r}(1, t),
\end{align*}
which is a contradiction to our assumption $F(1, t)<G_{b_r}(1, t).$ In the second case, we note
\begin{align*}
    \int_{y_*(x, t)}^{y^*(x, t)}\left(t u_0(\eta)+\eta-x\right)\rho_0(\eta)\,d\eta=0.
\end{align*}
This implies 
\begin{align*}
    tu(x, t)-x=-\frac{\int_{y_*(x, t)}^{y^*(x, t)}\eta \rho_0(\eta)\, d\eta}{\int_{y_*(x, t)}^{y^*(x, t)}\rho_0(\eta)\, d\eta}\geq -y^*(x, t),
\end{align*}
and thus, passing to the limit $x\nearrow 1,$ we get $t u(1-, t)\geq 1-y^*(1-, t)=1-y_*(1, t)>0.$ This gives $u(1-, t)>0=u(1, t).$ Finally, we observe that the equality of potentials at the boundary can be handled similarly to the interior case. This completes the proof.
\end{proof}
\begin{proof}[Proof of Theorem \ref{TH1.3}]
 Combining \eqref{e3.22}, \eqref{e3.30}, Lemma \ref{thm: bd condition} and Lemma \ref{thm: entropy}, we conclude the proof.   
\end{proof}
\section{Explicit examples with Riemann data}\label{sec:5}
In this section, we provide some explicit examples when the initial and boundary data are of Riemann type. In our example, we show that the choice of initial and boundary data can lead to mass accumulation on the right boundary at $\left\{x=1\right\}$ and also on the left boundary $\{x=0\}.$
\subsection{Concentration of mass on the right boundary}
 We start our example with the following specific choice of initial and boundary data.
 \begin{align*}
 u_0=a, u_{b_l}=\tilde{b}, u_{b_r}=b, \rho_0=\rho_{b_l}=\rho_{b_r}=1, \,\,\, \text{with}\,\,\, a<b<0, \tilde{b}>0\,\,\, \text{and}\,\,\, \tilde{b}+a>0.
 \end{align*}
 Note that the restrictions on the data immediately imply $\tilde{b}+b>0.$
 From the definition of initial and boundary potentials, we get
 \begin{align*}
     &F(y, x, t)=\frac{1}{2}\left(y+(a t -x)\right)^2-\frac{1}{2}\left(a t -x\right)^2,\\
     &G_{b_l}(\tau, x, t)=\frac{1}{2}\left(\tilde{b}\tau+x-\tilde{b}t\right)^2-\frac{1}{2}\left(x-\tilde{b}t\right)^2, \\
     &G_{b_r}(\xi, x, t)=\frac{1}{2}\left(b \xi+(x-1-b t)\right)^2-\frac{1}{2}\left(x-1-b t\right)^2+(a t-x)+\frac{1}{2}.
 \end{align*}
 Moreover, we have
 \begin{align*}
 arg \min_{0\leq y \leq 1}F(y, x, t)=\begin{cases}
x-at\,\,\,\, &\text{if}\,\,\, 0\leq x-at\leq 1,\\
0\,\,\,\, &\text{if}\,\,\, x-at< 0,\\
1 \,\,\,\, &\text{if}\,\,\, 1< x-at,
\end{cases}
arg \min_{\tau \geq 0}G_{b_l}(\tau, x, t)=\begin{cases}
\frac{\tilde{b}t-x}{\tilde{b}}\,\,\,\, &\text{if}\,\,\, \tilde{b}t-x\geq 0,\\
0\,\,\,\, &\text{if}\,\,\, \tilde{b}t-x< 0,
\end{cases}
\end{align*}
and 
\begin{align*}
arg \min_{\xi \geq 0}G_{b_r}(\xi, x, t)=\begin{cases}
\frac{bt-x+1}{b}\,\,\,\, &\text{if}\,\,\, bt-x+1\leq 0,\\
0\,\,\,\, &\text{if}\,\,\, bt-x+1> 0.
\end{cases}    
\end{align*}
Therefore, we get
\begin{align*}
F(x, t)=\begin{cases}
-\frac{1}{2}\left(a t-x\right)^2\,\,\,\, &\text{if}\,\,\, 0\leq x-at\leq 1,\\
0\,\,\,\, &\text{if}\,\,\, x-at< 0,\\
\frac{1}{2}+a t-x \,\,\,\, &\text{if}\,\,\, 1< x-at,
\end{cases}
G_{b_l}(x, t)=\begin{cases}
-\frac{1}{2}\left(x-\tilde{b}t\right)^2\,\,\,\, &\text{if}\,\,\, \tilde{b}t-x\geq 0,\\
0\,\,\,\, &\text{if}\,\,\, \tilde{b}t-x< 0,
\end{cases}
\end{align*}
and 
\begin{align*}
G_{b_r}(x, t)=\begin{cases}
-\frac{1}{2}\left(x-1-bt\right)^2+(at-x)+\frac{1}{2}&\text{if}\,\,\, bt-x+1\leq 0,\\
at-x+\frac{1}{2}\,\,\,\, &\text{if}\,\,\, bt-x+1> 0.
\end{cases}
\end{align*}
Now we study the curves generated by the potentials.

\noindent\underline{Curve generated by the potentials $F$ and $G_{b_l}:$} We have the following $X_1(t)$ generated by the potentials $F$ and $G_{b_l}.$ Indeed, 
 \begin{align*}
     -\frac{1}{2}\left(at-x\right)^2=-\frac{1}{2}(x-\tilde{b}t)^2
 \end{align*}
 and this implies $X_1(t):=\frac{a+\tilde{b}}{2}t.$ Also note that $\dot{X}_1(t)>0.$ Moreover, mass on the curve $X_1(t)$ is $(\tilde{b}-a)t \delta.$ One can easily check the equality $q(x_1+, t)-q(x_1-, t)=\dot{X_1}(t)[m].$

\noindent\underline{Curve generated by the potentials $G_{b_l}$ and the rarefaction region $\{G_{b_r}=F\}:$} Similarly as above, we can find a curve $X_2(t)$ by
 \begin{align*}
     -\frac{1}{2}\left(x-\tilde{b}t\right)^2=\frac{1}{2}+at-x
 \end{align*}
 and this gives $X_2(t)=\tilde{b}t+1-\sqrt{2(\tilde{b}-a)t}.$ We have $\dot{X_2}(t)>0$ for $t\geq \tfrac{2}{\tilde{b}-a}$ and the mass along the curve is $\sqrt{2(\tilde{b}-a)t}\delta.$ Moreover, it can be checked $q(x_2+, t)-q(x_2-, t)=\dot{X_2}(t)[m].$ 

\noindent\underline{Curve generated by the potentials $G_{b_l}$ and $G_{b_r}:$} Again, we find a curve $X_3(t)$ from 
\begin{align*}
    -\frac{1}{2}\left(x-\tilde{b}t\right)^2=-\frac{1}{2}\left(x-1-bt\right)^2+\left(at-x\right)+\frac{1}{2}
\end{align*}
and thus $X_3(t):=\left(\frac{a-b}{\tilde{b}-b}\right)+\left(\frac{\tilde{b}+b}{2}\right)t.$ Mass along the curve can be calculated as $(\tilde{b}-b)t \delta.$ These  calculations leads us to the following Figure \ref{right bd fig}.
\begin{figure}[ht!]
\begin{minipage}{0.5\textwidth}
\begin{tikzpicture}[scale=1.55]
\draw[-] (0,0)--(4,0) node[anchor=north]{$x=1$};
\draw[-] (4,0)--(0,0) node[anchor=north]{$x=0$};
\draw[->] (0,0)--(0,6.3) node[anchor=east]{$t$};
\draw[->] (4,0)--(4,6.3) node[anchor=east]{$t$};
\draw  (2.5,0)node [anchor=north] {\parbox{3cm}{$\rho_0=1,$ $u_0=a$}};
\draw(-0.45,3.5)node[anchor=north, rotate=90]{$\rho_{b_l}=1$, $u_{b_l}=\tilde{b}$};
\draw(4,4.4)node[anchor=north, rotate=90]{$\rho_{b_r}=1$, $u_{b_r}=b$};
\draw (3.95,16/9)--(4.05,16/9);
\draw (4.6,16/9) node[anchor=east] {$\tfrac{2}{\tilde{b}-a}$};
\draw (3.95,16/5)--(4.05,16/5);
\draw (4.8,16/5) node[anchor=east] {$\tfrac{2(\tilde{b}-a)}{(\tilde{b}-b)^2}$};
\draw (3.95,16/2.8)--(4.05,16/2.8);
\draw (3.9,16/2.8) node[anchor=east] {$\tfrac{2(\tilde{b}-a)}{(\tilde{b}-b)(\tilde{b}+b)}$};
%
%
\draw (0,0) -- (16/30,16/9);
%
\draw (4,0)--(16/30,16/9);
\draw (4,0)--(16/10,16/5);
\draw (16/10, 16/5)--(4, 16/2.8);
\draw[scale=1, smooth] 
    (16/30,16/9) .. controls (16/22,16/7.1) .. (16/10,16/5);
\draw (16/6.5, 16/10)node[anchor=north, rotate=-40, teal]{$F=G_{b_r}=at-x+\frac{1}{2}$};
\draw (2.7, 16/4.5)node[anchor=north, rotate=0, teal]{\parbox{2cm}{$G_{b_r}=$\\$-\frac{1}{2}\left(x-1-bt\right)^2+\left(at-x+\frac{1}{2}\right)$}};
\draw (16/8, 16/18)node[anchor=north, rotate=0, teal]{\parbox{2cm}{$F=$\\$-\frac{1}{2}\left(at-x\right)^2$}};
\draw (16/12, 16/3)node[anchor=north, rotate=0, teal]{\parbox{2cm}{$G_{b_l}=$\\$-\frac{1}{2}\left(x-\tilde{b}t\right)^2$}};
\draw (16/25, 16/15)node[anchor=north, rotate=0, teal]{$X_1(t)$};
\draw (16/25, 16/5.5)node[anchor=north, rotate=0, teal]{$X_2(t)$};
\draw (16/6, 16/3.2)node[anchor=north, rotate=0, teal]{$X_3(t)$};
\end{tikzpicture}
\end{minipage}\hfill
\begin{minipage}{0.5\textwidth}
\begin{tikzpicture}[scale=1.55]
\draw[-] (0,0)--(4,0) node[anchor=north]{$x=1$};
\draw[-] (4,0)--(0,0) node[anchor=north]{$x=0$};
\draw[->] (0,0)--(0,6.3) node[anchor=east]{$t$};
\draw[->] (4,0)--(4,6.3) node[anchor=east]{$t$};
\draw  (2.5,0)node [anchor=north] {\parbox{3cm}{$\rho_0=1,$ $u_0=a$}};
\draw(-0.45,4.4)node[anchor=north, rotate=90]{$\rho_{b_l}=1$, $u_{b_l}=\tilde{b}$};
\draw(4,4.4)node[anchor=north, rotate=90]{$\rho_{b_r}=1$, $u_{b_r}=b$};
\draw (3.95,16/9)--(4.05,16/9);
\draw (4.6,16/9) node[anchor=east] {$\tfrac{2}{\tilde{b}-a}$};
\draw (3.95,16/5)--(4.05,16/5);
\draw (4.8,16/5) node[anchor=east] {$\tfrac{2(\tilde{b}-a)}{(\tilde{b}-b)^2}$};
\draw (3.95,16/2.8)--(4.05,16/2.8);
\draw (5.1,16/2.8) node[anchor=east] {$\tfrac{2(\tilde{b}-a)}{(\tilde{b}-b)(\tilde{b}+b)}$};
%
%
\draw [line width=0.5mm, teal] (0,0) -- (16/30,16/9);
\draw[->][line width=0.5mm, teal] (4, 16/2.8)--(4, 6.3);
%
\draw (4,0)--(16/30,16/9);
\draw (16/10, 16/5)--(4, 16/2.8);
\draw[scale=1, smooth, line width=0.5mm, teal] 
    (16/30,16/9) .. controls (16/22,16/7.1) .. (16/10,16/5);
\draw (4,0)--(16/10,16/5);
\draw [line width=0.5mm, teal](16/10, 16/5)--(4, 16/2.8);
\draw (16/10, 16/7)node[anchor=north, rotate=0, violet]{\parbox{2cm}{$m=1,$ $\rho=0,$ $u=\frac{x-1}{t}$}};
\draw (2.7, 16/4.5)node[anchor=north, rotate=0, violet]{\parbox{2cm}{$m=x-bt,$ $\rho=1,$ $u=b$}};
\draw (16/8, 16/21)node[anchor=north, rotate=0, violet]{\parbox{2cm}{$m=x-at,$ $\rho=1,$ $u=a$}};
\draw (16/12, 16/3)node[anchor=north, rotate=0, violet]{\parbox{2cm}{$m=x-\tilde{b}t,$ $\rho=1,$ $u=\tilde{b}$}};
\draw (16/60, 16/18)node[anchor=north, rotate=72, violet]{$\rho=(\tilde{b}-a)t \delta$};
\draw (16/40, 16/4.5)node[anchor=north, rotate=0, violet]{\parbox{0.9cm}{$\rho=\sqrt{2(\tilde{b}-a)t}\delta$}};
\draw (16/5.8, 16/3.2)node[anchor=north, rotate=45, violet]{$\rho=(\tilde{b}-b)t \delta$};
\end{tikzpicture}
\end{minipage}
\caption{Mass generating from origin, absorbing rarefaction wave and concentrating on the right boundary in a finite time.}\label{right bd fig}
\end{figure}
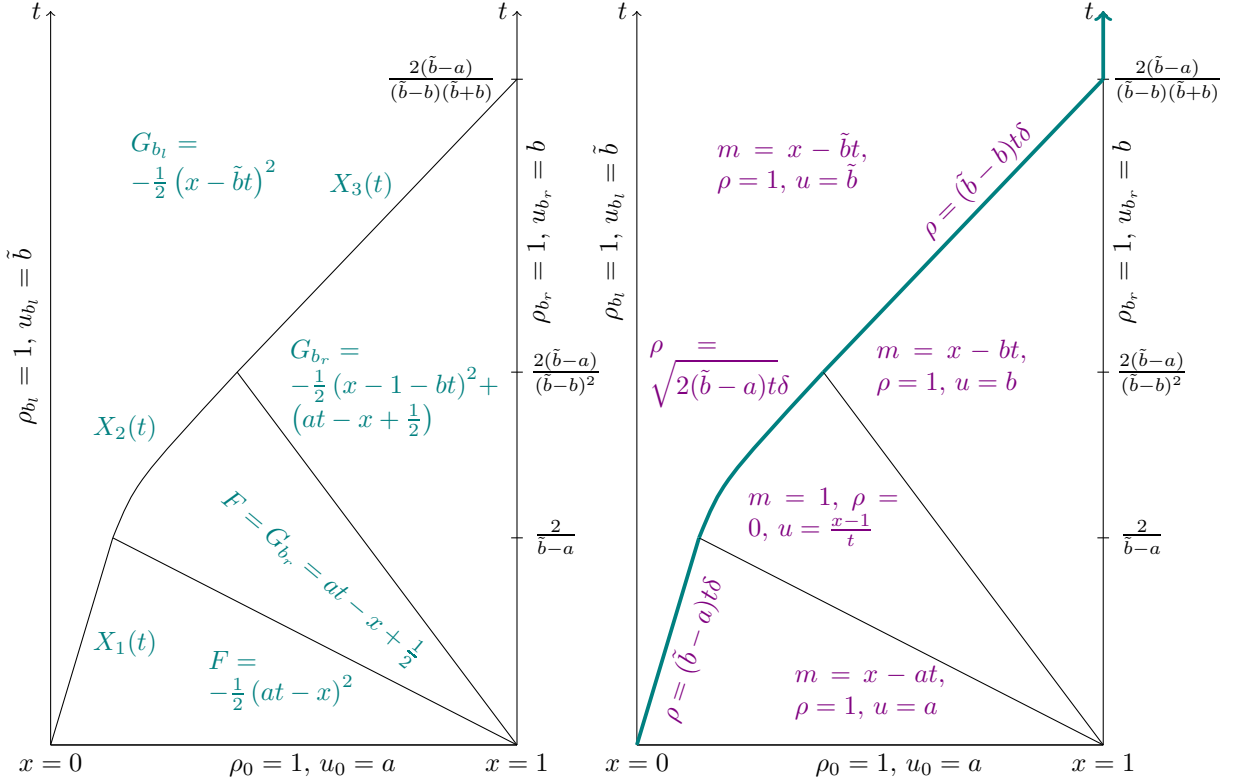
\subsection{Concentration of mass on the left boundary} In this example, we consider the initial and boundary data as
\begin{align*}
u_{b_l}=\tilde{b}, u_{b_r}=b, u_0=a, \rho_{b_r}=\rho_{b_l}=\rho_0=1,\,\,\, \text{with}\,\,\, 0<\tilde{b}<a, b<0, \,\,\, \text{and}\,\,\, 0<a<-b. 
\end{align*}
A similar calculation of potentials and curves leads to the following Figure \ref{left bd fig}.
\begin{figure}[ht!]
\begin{minipage}{0.5\textwidth}
\begin{tikzpicture}[scale=1.55]
\draw[-] (-4,0)--(0,0) node[anchor=north]{$x=1$};
\draw[-] (0,0)--(-4,0) node[anchor=north]{$x=0$};
\draw[->] (0,0)--(0,6.3) node[anchor=east]{$t$};
\draw[->] (-4,0)--(-4,6.3) node[anchor=east]{$t$};
\draw  (-1.5,0)node [anchor=north] {\parbox{3cm}{$\rho_0=1,$ $u_0=a$}};
\draw(0.1,4.5)node[anchor=north, rotate=90]{$\rho_{b_r}=1$, $u_{b_r}=b$};
\draw(-4.5,4.4)node[anchor=north, rotate=90]{$\rho_{b_l}=1$, $u_{b_l}=\tilde{b}$};
\draw (-3.95,16/9)--(-4.05,16/9);
\draw (-4.6,16/9) node[anchor=west] {$\tfrac{2}{a-b}$};
\draw (-3.95,16/5)--(-4.05,16/5);
\draw (-4.8,16/5) node[anchor=west] {$\tfrac{2(a-b)}{(\tilde{b}-b)^2}$};
\draw (-3.95,16/2.8)--(-4.05,16/2.8);
\draw (-3.9,16/2.8) node[anchor=west] {$\tfrac{2(b-a)}{(\tilde{b}-b)(\tilde{b}+b)}$};
\draw (0,0) -- (-16/30,16/9);
\draw(-16/10, 16/5)--(-4, 16/2.8);

\draw[scale=1, smooth] 
    (-16/30,16/9) .. controls (-16/22, 16/7) .. (-16/10,16/5);
\draw (-4, 0)--(-16/30,16/9);
\draw (-4, 0)--(-16/10,16/5);
\draw (-16/6.5, 16/10)node[anchor=north, rotate=40, teal]{$F=G_{b_l}=0$};
\draw (-2.7, 16/4.5)node[anchor=north, rotate=0, teal]{\parbox{2cm}{$G_{b_l}=$\\$-\frac{1}{2}\left(x-\tilde{b}t\right)^2$}};
\draw (-16/10, 16/18)node[anchor=north, rotate=0, teal]{\parbox{2cm}{$F=$\\$-\frac{1}{2}\left(at-x\right)^2$}};
\draw (-16/12, 16/3)node[anchor=north, rotate=0, teal]{\parbox{2cm}{$G_{b_r}=$\\$-\frac{1}{2}\left(x-1-bt\right)^2+(at-x)+\frac{1}{2}$}};
\draw (-16/25, 16/15)node[anchor=north, rotate=0, teal]{$X_1(t)$};
\draw (-16/35, 16/6)node[anchor=north, rotate=0, teal]{$X_2(t)$};
\draw (-16/6, 16/3.2)node[anchor=north, rotate=0, teal]{$X_3(t)$};
\end{tikzpicture}
\end{minipage}\hfill
\begin{minipage}{0.5\textwidth}
\begin{tikzpicture}[scale=1.55]
\draw[-] (-4,0)--(0,0) node[anchor=north]{$x=1$};
\draw[-] (0,0)--(-4,0) node[anchor=north]{$x=0$};
\draw[->] (0,0)--(0,6.3) node[anchor=east]{$t$};
\draw[->] (-4,0)--(-4,6.3) node[anchor=east]{$t$};
\draw  (-1.5,0)node [anchor=north] {\parbox{3cm}{$\rho_0=1,$ $u_0=a$}};
\draw(0.1,4.5)node[anchor=north, rotate=90]{$\rho_{b_r}=1$, $u_{b_r}=b$};
\draw(-4.5,4.4)node[anchor=north, rotate=90]{$\rho_{b_l}=1$, $u_{b_l}=\tilde{b}$};
\draw (-3.95,16/9)--(-4.05,16/9);
\draw (-4.6,16/9) node[anchor=west] {$\tfrac{2}{a-b}$};
\draw (-3.95,16/5)--(-4.05,16/5);
\draw (-4.8,16/5) node[anchor=west] {$\tfrac{2(a-b)}{(\tilde{b}-b)^2}$};
\draw (-3.95,16/2.8)--(-4.05,16/2.8);
\draw (-3.9,16/2.8) node[anchor=west] {$\tfrac{2(b-a)}{(\tilde{b}-b)(\tilde{b}+b)}$};
\draw [line width=0.5mm, teal] (0,0) -- (-16/30,16/9);
\draw [line width=0.5mm, teal] (-16/10, 16/5)--(-4, 16/2.8);
\draw[->] [line width=0.5mm, teal]  (-4, 16/2.8)--(-4, 6.3); 

\draw[scale=1, smooth, line width=0.5mm, teal] 
    (-16/30,16/9) .. controls (-16/22, 16/7) .. (-16/10,16/5);
\draw (-4, 0)--(-16/30,16/9);
\draw (-4, 0)--(-16/10,16/5);
\draw (-16/6.5, 16/10)node[anchor=north, rotate=40, violet]{$m=\rho=0,$ $u=\frac{x}{t}$};
\draw (-2.9, 16/5)node[anchor=north, rotate=0, violet]{\parbox{2cm}{$m=x-\tilde{b}t,$ $\rho=1,$ $u=\tilde{b}$}};
\draw (-16/10, 16/18)node[anchor=north, rotate=0, violet]{\parbox{2cm}{$m=x-at,$ $\rho=1,$ $u=a$}};
\draw (-16/12, 16/3)node[anchor=north, rotate=0, violet]{\parbox{2.5cm}{$m=x-bt,$$\rho=1,$$u=b$}};
\draw (-16/55, 16/15.5)node[anchor=north, rotate=-73, violet]{$\rho=(a-b)t \delta$};
\draw (-16/21, 16/4.8)node[anchor=north, rotate=0, violet]{\parbox{1.5cm}{$\rho=\sqrt{2(a-b)t}\delta$}};
\draw (-16/6, 16/3.2)node[anchor=north, rotate=-50, violet]{$\rho=(\tilde{b}-b)t \delta$};
\end{tikzpicture}
\end{minipage}
\caption{Mass generating from $(1, 0)$, absorbing rarefaction wave and concentrating on the left boundary in a finite time. Here $X_1(t)=\left(\frac{a+b}{2}\right)t+1,$ $X_2(t)=bt+\sqrt{2(a-b)t},$ $X_3(t)=\left(\frac{a-b}{\tilde{b}-b}\right)+\frac{\tilde{b}+b}{2}t.$}\label{left bd fig}
\end{figure}
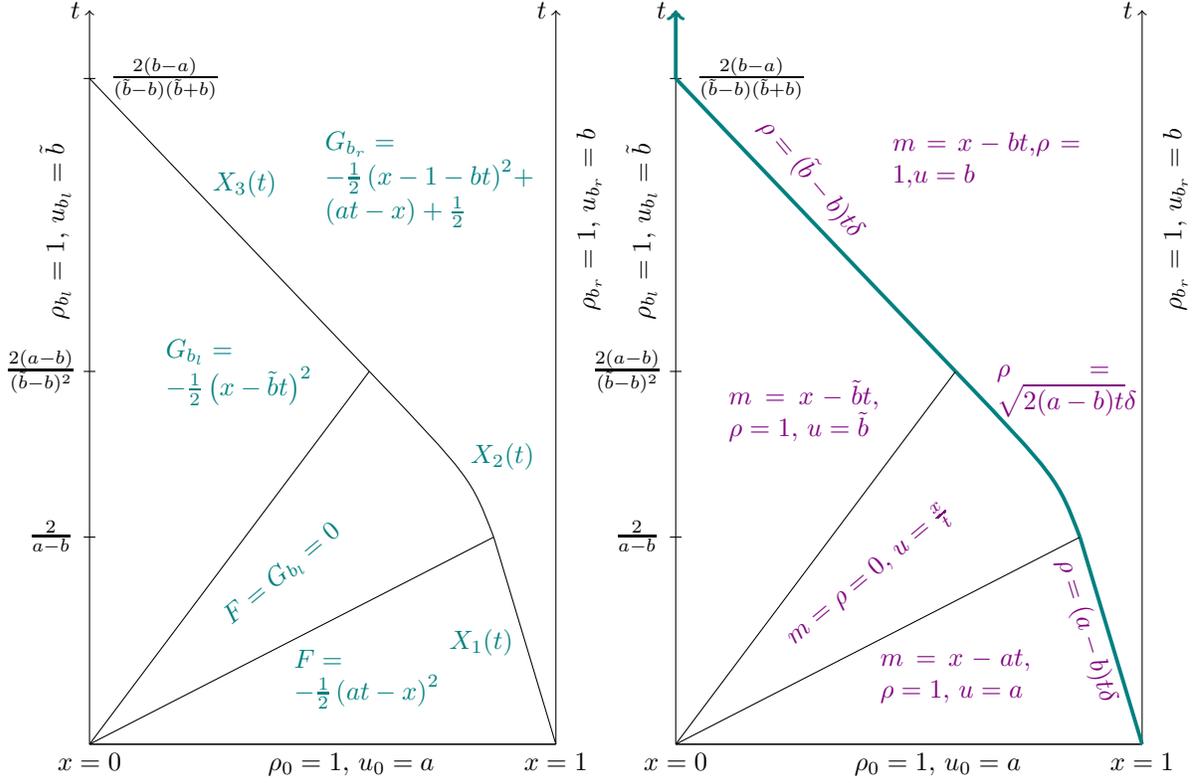

\subsection{Concentration of mass on curves inside the domain} In this case, we consider the following sets of boundary data:
\begin{align*}
\begin{cases}
\rho_{b_l}=\rho_{b_r}=1;\,\, u_{b_l}=3, u_{b_r}=-3\,\,\,\, &\text{for} \,\,\, t\geq \frac{1}{2},\\
\rho_{b_l}=\rho_{b_r}=1;\,\, u_{b_l}=2, u_{b_r}=-2 \,\,\,\, &\text{for} \,\,\, t< \frac{1}{2},
\end{cases}
\end{align*}
and initial datum: 
\begin{align*}
\begin{cases}
\rho_0=1; u_0=2\,\,\,\, &\text{for}\,\,\,x\leq \frac{1}{2},\\
\rho_0=1; u_0=-2, \,\,\,\, &\text{for}\,\,\,x> \frac{1}{2}.
\end{cases}
\end{align*}
This leads us to the Figure \ref{inside domain:fig}. Here, the initial data and the boundary data have jumps at $x=\frac{1}{2}$ and $t=\frac{1}{2},$ respectively. Thus, a Dirac delta with mass $4t \delta$ is generated from $x=\frac{1}{2}$ and Dirac deltas with mass $(t-\frac{1}{2}) \delta$ are generated from $t=\frac{1}{2}$ on the both of the boundaries. They meet at a point $(\frac{1}{2}, \frac{7}{10})$ and subsequently merge, propagating as a single Dirac delta with mass $(6t-1)\delta.$ 
\begin{figure}[ht!]
\begin{minipage}{0.5\textwidth}
\begin{tikzpicture}[scale=1.55]
\draw[-] (0,0)--(4,0) node[anchor=north]{$x=1$};
\draw[-] (4,0)--(0,0) node[anchor=north]{$x=0$};
\draw[->] (0,0)--(0,6.3) node[anchor=east]{$t$};
\draw[->] (4,0)--(4,6.3) node[anchor=east]{$t$};
\draw  (1.3,0)node [anchor=north] {\parbox{2cm}{$\rho_0=1$\\$u_0=2$}};
\draw  (3.0,0)node[anchor=north] {\parbox{2cm}{$\rho_0=1$\\$u_0=-2$}};
\draw (2,0) node[anchor=north] {$\frac{1}{2}$};
\draw (2,-.05)--(2,0.05);
\draw(-0.35,4.5)node[anchor=north, rotate=90]{$\rho_{b_l}=1$, $u_{b_l}=3$};
\draw(-0.35,1.3)node[anchor=north, rotate=90]{$\rho_{b_l}=1$, $u_{b_l}=2$};
\draw(4.05, 5.3)node[anchor=north, rotate=90]{$\rho_{b_r}=1$, $u_{b_r}=-3$};
\draw(4.05,1.3)node[anchor=north, rotate=90]{$\rho_{b_r}=1$, $u_{b_r}=-2$};
\draw (3.95,16/4)--(4.05,16/4);
\draw (4.5,16/4) node[anchor=east] {$\tfrac{7}{10}$};
\draw (-0.05,16/6)--(0.05,16/6);
\draw (0,16/6) node[anchor=east] {$\tfrac{1}{2}$};
\draw (3.95,16/6)--(4.05,16/6);
\draw (4.3,16/6) node[anchor=east] {$\tfrac{1}{2}$};
\draw[->] (2,0)--(2, 16/3);
\draw (0, 16/6)--(2, 16/4);
\draw (4, 16/6)--(2, 16/4);
\draw  (2,0.8)node[anchor=north, rotate=90]{\parbox{2cm}{$X_1(t)=1/2$}};
\draw  (.7, 16/4.3)node[anchor=north, rotate=35]{\parbox{2.5cm}{$X_2(t)=\frac{5}{2}(t-\frac{1}{2})$}};
\draw  (3.2, 3.75)node[anchor=north, rotate=-33]{\parbox{4cm}{$X_3(t)=-\frac{5}{2}(t-\frac{1}{2})+1$}};
%
%
%
\draw (1, 2.45)node[anchor=north, teal]{\parbox{2cm}{$F=G_{b_l}=$\\$-\frac{1}{2}\left(x-2t\right)^2$}};
\draw (3, 2.45)node[anchor=north, teal]{\parbox{2.5cm}{$F=G_{b_r}=2t$\\$-\frac{1}{2}\left(x+2t\right)^2$}};
\draw (3, 5.5)node[anchor=north, teal]{\parbox{2.5cm}{$G_{b_r}=-\frac{1}{2}(x-1-3t)^2$\\$-\frac{1}{2}x+\frac{5}{2}t-\frac{5}{8}$}};
\draw (1.05, 5)node[anchor=north, teal]{\parbox{3cm}{$G_{b_l}=-\frac{1}{2}(x-5t)$\\$-\frac{1}{2}\left(x-3t\right)^2-\frac{5}{8}$}};
\end{tikzpicture}
\end{minipage}\hfill
\begin{minipage}{0.5\textwidth}
\begin{tikzpicture}[scale=1.55]
\draw[-] (0,0)--(4,0) node[anchor=north]{$x=1$};
\draw[-] (4,0)--(0,0) node[anchor=north]{$x=0$};
\draw[->] (0,0)--(0,6.3) node[anchor=east]{$t$};
\draw[->] (4,0)--(4,6.3) node[anchor=east]{$t$};
\draw  (1.3,0)node [anchor=north] {\parbox{2cm}{$\rho_0=1$\\$u_0=2$}};
\draw  (3.0,0)node[anchor=north] {\parbox{2cm}{$\rho_0=1$\\$u_0=-2$}};
\draw (2,0) node[anchor=north] {$\frac{1}{2}$};
\draw (2,-.05)--(2,0.05);
\draw(-0.35,4.5)node[anchor=north, rotate=90]{$\rho_{b_l}=1$, $u_{b_l}=3$};
\draw(-0.35,1.3)node[anchor=north, rotate=90]{$\rho_{b_l}=1$, $u_{b_l}=2$};
\draw(4.05, 5.3)node[anchor=north, rotate=90]{$\rho_{b_r}=1$, $u_{b_r}=-3$};
\draw(4.05,1.3)node[anchor=north, rotate=90]{$\rho_{b_r}=1$, $u_{b_r}=-2$};
\draw (3.95,16/4)--(4.05,16/4);
\draw (4.5,16/4) node[anchor=east] {$\tfrac{7}{10}$};
\draw (-0.05,16/6)--(0.05,16/6);
\draw (0,16/6) node[anchor=east] {$\tfrac{1}{2}$};
\draw (3.95,16/6)--(4.05,16/6);
\draw (4.3,16/6) node[anchor=east] {$\tfrac{1}{2}$};
\draw[line width=0.5mm, teal][->] (2,0)--(2, 16/3);
\draw [line width=0.5mm, teal](0, 16/6)--(2, 16/4);
\draw [line width=0.5mm, teal](4, 16/6)--(2, 16/4);
\draw  (2,1.3)node[anchor=north, rotate=90, violet]{\parbox{2cm}{$\rho=4t \delta$}};
%
%
%

\draw (1, 2.45)node[anchor=north, violet]{\parbox{2cm}{$m=x-2t,$ $\rho=1,$ $u=2$}};
\draw (3, 2.45)node[anchor=north, violet]{\parbox{2.5cm}{$m=x+2t,$ $\rho=1,$ $u=-2$}};
\draw (3, 5)node[anchor=north, violet]{\parbox{2.5cm}{$m=\frac{1}{2}+(x-1+3t),$ $\rho=1,$ $u=-3$}};
\draw (1.05, 5)node[anchor=north, violet]{\parbox{3cm}{$m=\frac{1}{2}+(x-3t),$ $\rho=1,$ $u=3$}};
\draw  (.7, 16/4.3)node[anchor=north, rotate=35, violet]{\parbox{2.5cm}{$\rho=(t-\frac{1}{2})\delta$}};
\draw  (3.5, 3)node[anchor=north, rotate=-33, violet]{\parbox{4cm}{$\rho=(t-\frac{1}{2})\delta$}};
\draw  (2.5, 5.7)node[anchor=north, rotate=0, violet]{\parbox{4cm}{$\rho=(6t-1)\delta$}};
\draw	(2, 16/4) circle[radius=1.2pt];
\fill (2,16/4) circle[radius=1.2pt];
\end{tikzpicture}
\end{minipage}
\caption{Mass generating from the initial manifold, and two boundary manifolds, merging at $(\frac{1}{2}, \frac{7}{10})$ inside the domain.}\label{inside domain:fig}
\end{figure}
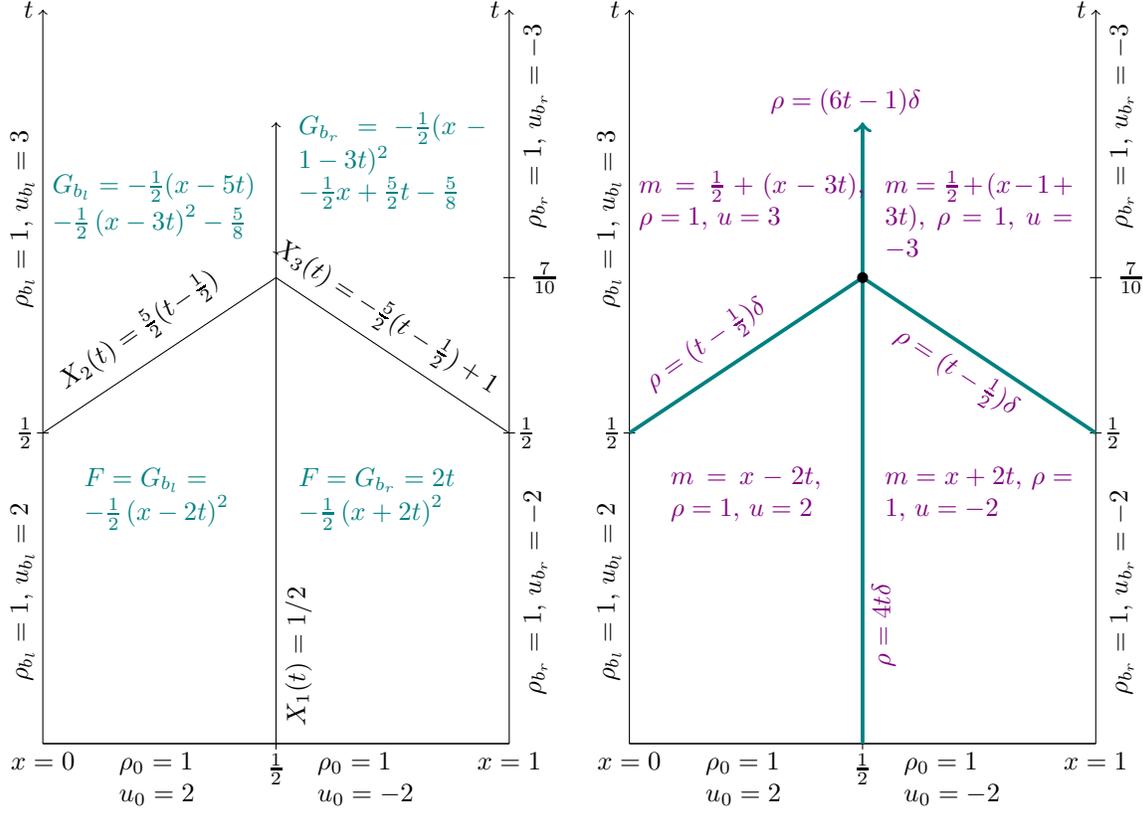
\subsection{Two rarefactions absorbed by one shock} We construct an example illustrating how rarefactions originating from the points $(0, 0)$ and $(1, 0)$ are absorbed by a Dirac delta emerging from the initial manifold. The initial and boundary data are given by:
\begin{align*}
    u_0=\begin{cases}
        2\,\,\, &\text{for}\,\,\, x\leq \frac{1}{2},\\
        -2\,\,\, &\text{for}\,\,\, x> \frac{1}{2}.
    \end{cases}
\,\,\,\,\,\,\,\,\,\,\, u_{b_l}=1, \,\, u_{b_r}=-1, \,\,\, \rho_{b_l}=\rho_{b_r}=\rho_0=1.  
\end{align*}
In this case, we have two rarefaction regions bounded by the lines $x=t; x=2t$ and $x=-t+1; x=-2t+1.$ A Dirac mass $4t\delta$ is generated from $X_1=\frac{1}{2}$ and propagates along the straight line $X_1=\frac{1}{2}$. For $t>\frac{1}{2},$ it evolves into $2t \delta,$ effectively absorbing the rarefactions.

Notice that, here the initial and boundary data are considered such a way that the rarefaction regions starts from the corner points of the domain $\Omega.$ A slight modification in the data can yield similar examples where the rarefactions emerge from the boundary manifold instead. Indeed, if we fix a time level $t=\varepsilon_0>0 $ and consider the boundary data as:
\begin{align*}
\begin{cases}
\rho_{b_l}=\rho_{b_r}=1;\,\, u_{b_l}=1, u_{b_r}=-1\,\,\,\, &\text{for} \,\,\, t\geq \varepsilon_0,\\
\rho_{b_l}=\rho_{b_r}=1;\,\, u_{b_l}=2, u_{b_r}=-2 \,\,\,\, &\text{for} \,\,\, t< \varepsilon_0,
\end{cases}
\end{align*}
then rarefactions will be initiated from the points $(0, \varepsilon_0)$ and $(1, \varepsilon_0)$ which will ultimately be absorbed by the delta shock propagating from $x=\frac{1}{2}.$
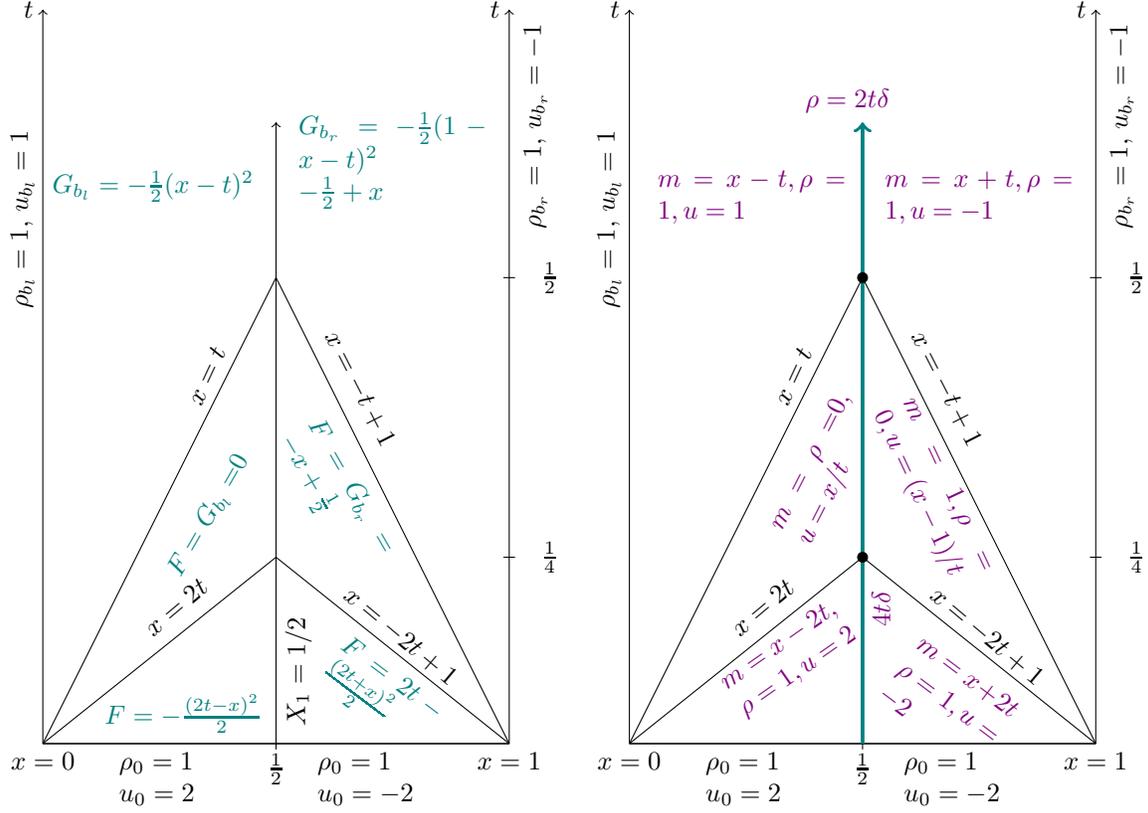
\begin{figure}[ht!]
\begin{minipage}{0.5\textwidth}
\begin{tikzpicture}[scale=1.55]
\draw[-] (0,0)--(4,0) node[anchor=north]{$x=1$};
\draw[-] (4,0)--(0,0) node[anchor=north]{$x=0$};
\draw[->] (0,0)--(0,6.3) node[anchor=east]{$t$};
\draw[->] (4,0)--(4,6.3) node[anchor=east]{$t$};
\draw  (1.3,0)node [anchor=north] {\parbox{2cm}{$\rho_0=1$\\$u_0=2$}};
\draw  (3.0,0)node[anchor=north] {\parbox{2cm}{$\rho_0=1$\\$u_0=-2$}};
\draw (2,0) node[anchor=north] {$\frac{1}{2}$};
\draw (2,-.05)--(2,0.05);
\draw(-0.35,4.5)node[anchor=north, rotate=90]{$\rho_{b_l}=1$, $u_{b_l}=1$};
\draw(4.05, 5.3)node[anchor=north, rotate=90]{$\rho_{b_r}=1$, $u_{b_r}=-1$};
\draw (3.95,16/4)--(4.05,16/4);
\draw (4.5,16/4) node[anchor=east] {$\tfrac{1}{2}$};
\draw (3.95,16/10)--(4.05,16/10);
\draw (4.5,16/10) node[anchor=east] {$\tfrac{1}{4}$};
\draw[->] (2,0)--(2, 16/3);
\draw (0, 0)--(2, 16/10);
\draw (0, 0)--(2, 16/4);
\draw (4, 0)--(2, 16/10);
\draw (4, 0)--(2, 16/4);
\draw  (2,0.8)node[anchor=north, rotate=90]{\parbox{2cm}{$X_1=1/2$}};
\draw  (1.5, 16/4.3)node[anchor=north, rotate=65]{\parbox{2.5cm}{$x=t$}};
\draw  (3.2, 2.45)node[anchor=north, rotate=-63]{\parbox{4cm}{$x=-t+1$}};
\draw  (1.3, 1.5)node[anchor=north, rotate=40]{\parbox{2cm}{$x=2t$}};
\draw  (3.7, .6)node[anchor=north, rotate=-40]{\parbox{4cm}{$x=-2t+1$}};
%
%
%
\draw (1.3, 2.1)node[anchor=north, rotate=60, teal]{\parbox{2cm}{$F=G_{b_l}=$$0$}};
\draw (2.8, 2.30)node[anchor=north, rotate=-60, teal]{\parbox{2cm}{$F=G_{b_r}=-x+\frac{1}{2}$}};
\draw (3, 5.5)node[anchor=north, teal]{\parbox{2.5cm}{$G_{b_r}=-\frac{1}{2}(1-x-t)^2$\\$-\frac{1}{2}+x$}};
\draw (1.05, 5)node[anchor=north, teal]{\parbox{3cm}{$G_{b_l}=-\frac{1}{2}(x-t)^2$}};
\draw (1.5, .5)node[anchor=north, teal]{\parbox{3cm}{$F=-\frac{(2t-x)^2}{2}$}};
\draw (3.1, .7)node[anchor=north, rotate=-37,  teal]{\parbox{1.6cm}{$F=2t-\frac{(2t+x)^2}{2}$}};
\end{tikzpicture}
\end{minipage}\hfill
\begin{minipage}{0.5\textwidth}
\begin{tikzpicture}[scale=1.55]
\draw[-] (0,0)--(4,0) node[anchor=north]{$x=1$};
\draw[-] (4,0)--(0,0) node[anchor=north]{$x=0$};
\draw[->] (0,0)--(0,6.3) node[anchor=east]{$t$};
\draw[->] (4,0)--(4,6.3) node[anchor=east]{$t$};
\draw  (1.3,0)node [anchor=north] {\parbox{2cm}{$\rho_0=1$\\$u_0=2$}};
\draw  (3.0,0)node[anchor=north] {\parbox{2cm}{$\rho_0=1$\\$u_0=-2$}};
\draw (2,0) node[anchor=north] {$\frac{1}{2}$};
\draw (2,-.05)--(2,0.05);
\draw(-0.35,4.5)node[anchor=north, rotate=90]{$\rho_{b_l}=1$, $u_{b_l}=1$};
\draw(4.05, 5.3)node[anchor=north, rotate=90]{$\rho_{b_r}=1$, $u_{b_r}=-1$};
\draw (3.95,16/4)--(4.05,16/4);
\draw (4.5,16/4) node[anchor=east] {$\tfrac{1}{2}$};
\draw (3.95,16/10)--(4.05,16/10);
\draw (4.5,16/10) node[anchor=east] {$\tfrac{1}{4}$};
\draw[line width=0.5mm, teal][->] (2,0)--(2, 16/3);
\draw (0, 0)--(2, 16/10);
\draw (0, 0)--(2, 16/4);
\draw (4, 0)--(2, 16/10);
\draw (4, 0)--(2, 16/4);
\draw  (2,1.65)node[anchor=north, rotate=90, violet]{\parbox{2cm}{$4t\delta$}};
\draw  (1.5, 16/4.3)node[anchor=north, rotate=65]{\parbox{2.5cm}{$x=t$}};
\draw  (3.2, 2.45)node[anchor=north, rotate=-63]{\parbox{4cm}{$x=-t+1$}};
\draw  (1.3, 1.5)node[anchor=north, rotate=40]{\parbox{2cm}{$x=2t$}};
\draw  (3.7, .6)node[anchor=north, rotate=-40]{\parbox{4cm}{$x=-2t+1$}};

\draw	(2, 16/4) circle[radius=1.2pt];
\fill (2,16/4) circle[radius=1.2pt];
\draw	(2, 16/10) circle[radius=1.2pt];
\fill (2,16/10) circle[radius=1.2pt];

%
%
%
\draw  (2.8, 5.7)node[anchor=north, rotate=0, violet]{\parbox{4cm}{$\rho=2t\delta$}};
\draw (1.4, 2.5)node[anchor=north, rotate=63, violet]{\parbox{2cm}{$m=\rho=$$0,$ $u=x/t$}};
\draw (2.89, 2.30)node[anchor=north, rotate=-65, violet]{\parbox{2.5cm}{$m=1, \rho=0, u=(x-1)/t$}};
\draw (3, 5)node[anchor=north, violet]{\parbox{2.5cm}{$m=x+t, \rho=1, u=-1$}};
\draw (1.05, 5)node[anchor=north, violet]{\parbox{2.5cm}{$m=x-t, \rho=1, u=1$}};
\draw (1.5, 1.2)node[anchor=north, rotate=35, violet]{\parbox{3cm}{$m=x-2t,$ \\$ \rho=1, u=2$}};
\draw (3, .7)node[anchor=north, rotate=-37,  violet]{\parbox{1.6cm}{$m=x+2t$\\$\rho=1, u=-2$}};
\end{tikzpicture}
\end{minipage}
\caption{Rarefaction generating from $(0, 0)$ and $(1, 0)$ absorbed by a shock generated from $x=\frac{1}{2}.$}\label{2 rarefaction:fig}
\end{figure}
\begin{rem}
(i)\, In Figure \ref{right bd fig}, the left boundary data $\tilde{b}$ is chosen to be sufficiently large and positive. This strong positive boundary data imposes a dominant influence, driving the entire mass toward the right boundary over time. On the other hand, in Figure \ref{left bd fig}, the right boundary data $b$ is set to be significantly negative. This negative boundary data effectively pushes all the mass toward the left boundary. These setups demonstrate how the choice of boundary conditions can control the direction of mass flow, ensuring that it is driven entirely toward the specified boundary.

\noindent (ii)\, In above the Figures \ref{right bd fig}-\ref{left bd fig}, we illustrate the values of $(\rho, u)$ within regions separated by rarefaction waves or shocks. Additionally, one can compute the value of $u$ at any point lying on the shock curve using the formula provided in Definition \ref{d2}.   For example, consider Figure \ref{right bd fig} and calculate the value of $u$ at a point $(x, t)$ located on the curve $X_3(t)$. Note that, at any point $(X_3(t), t),$ it holds that $G_{b_r}(x, t)=G_{b_l}(x, t).$ Thus, applying the formula given in Definition \ref{d2}, we have 
\begin{align*}
    u(x, t)&:=\frac{\int_0^{\tau^*(x, t)}\rho_{b_l} u^2_{b_l}-\int_0^{\xi^*(x, t)}\rho_{b_r}u^2_{b_r}+\int_0^1 \rho_0 u_0}{\int_0^{\tau^*(x, t)}\rho_{b_l} u_{b_l}-\int_0^{\xi^*(x, t)}\rho_{b_r}u_{b_r}+\int_0^1 \rho_0}.
\end{align*}
Substituting the explicit values of the initial and boundary datum, we obtain
\begin{align*}
u(x, t):=\frac{\int_0^{\frac{\tilde{b}t-x}{\tilde{b}}}\tilde{b}^2-\int_0^{\frac{bt-x+1}{b}}b^2+a}{\int_0^{\frac{\tilde{b}t-x}{\tilde{b}}}\tilde{b}-\int_0^{\frac{bt-x+1}{b}}b+1}.
\end{align*}
Simplifying further using the equation of the curve $X_3(t),$ we get:
\begin{align*}
    u(x, t)=\frac{\tilde{b}(\tilde{b}t-x)-b(bt-x+1)+a}{(\tilde{b}-b)t}=\frac{\tilde{b}+b}{2}.
\end{align*}
Note that this aligns with Lemma \ref{lem:curves}, where solutions are interpreted as the slopes of the curves and here $\frac{\tilde{b}+b}{2}$ is precisely the slope of the curve $X_3(t).$

\noindent(iii) We want to point out that there might be situations where $F(x, t)=G_{b_l}(x, t)=G_{b_r}(x, t).$ Figure \ref{inside domain:fig} illustrates such a case. For any points $(x,t)$ on the curve $X_1(t)$ with $t< \frac{7}{10},$ we observe that $F(x, t)=G_{b_l}(x, t)=G_{b_r}(x,t).$ The solution $u(x,t)$ along the curve $X_1(t), $ up to time $t=\frac{7}{10},$ can be computed using either \ref{it3} or \ref{it7} or \ref{it8} of Lemma \ref{lem:curves} and $u(x,t)$ turns out to be the same in every case. Indeed, using \ref{it3}, we get
\begin{align*}
    u(x, t):=\frac{\int_{0}^{\tau ^*}\rho_{b_l} u^2_{b_l}+\int_{0}^{y^*}\rho_0 u_{0}}{\int_{0}^{\tau^*}u_{b_l}\rho_{b_l}+\int_{0}^{y^*}\rho_0}=\frac{\int_0^{\frac{2t-x}{2}}4+\int_0^{\frac{1}{2}}2-\int_{\frac{1}{2}}^{2t+x}2}{\int_0^{\frac{2t-x}{2}}2+2t+x}=\frac{2-4x}{4t}.
\end{align*}
Using \ref{it7}, we get
\begin{align*}
   u(x, t):=\frac{\int_{0}^{\xi ^*}\rho_{b_r} u^2_{b_r}+\int_{0}^{y_*}\rho_0 u_{0}-\int_0^1 \rho_0 u_0}{\int_{0}^{\xi^*}u_{b_r}\rho_{b_r}+\int_{0}^{y^*}\rho_0-\int_0^1 \rho_0}=\frac{\int_0^{\frac{x-1+2t}{2}}4+\int_0^{x-2t}2}{\int_0^{\frac{x-1+2t}{2}}(-2)+\int_0^{x-2t}1-1}=\frac{2-4x}{4t}. 
\end{align*}
And lastly, using \ref{it8}, we get
\begin{align*}
    u(x, t):=\frac{\int_{0} ^ {\tau^{*}(x,t)}\rho_{b_l} u^2_{b_l}-\int_{0} ^ {\xi^{*}(x,t)}\rho_{b_r} u^2_{b_r}+\int_0^1 \rho_0u_0}{\int_{0} ^ {\tau^{*}(x,t)}\rho_{b_l} u_{b_l} -\int_{0} ^ {\xi^{*}(x,t)}\rho_{b_r} u_{b_r}+\int_0^1 \rho_0}=\frac{\int_0^{\frac{2t-x}{2}}4-\int_0^{\frac{x-1+2t}{2}}4}{\int_0^{\frac{2t-x}{2}}2+\int_0^{\frac{x-1+2t}{2}}2+1}=\frac{2-4x}{4t}. 
\end{align*}
Moreover, solutions along the curve $X_2(t), X_3(t)$ and along $X_1(t), t\geq \frac{7}{10}$ can be computed using \ref{it3}, \ref{it7} and \ref{it8} of Lemma \ref{lem:curves}. respectively.

\noindent (iv) Note that Figure \ref{2 rarefaction:fig} illustrates that for any $t \in [\frac{1}{4}, \frac{1}{2}],$ we may have situations where $F(x, t)=G_{b_l}(x, t)$ in $[l_1(t), r_1(t)]$ and $F(x, t)=G_{b_r}(x, t)$ in $[l_2(t), r_2(t)],$ with these two intervals being disjoint, i.e., $[l_1(t), r_1(t)]\cap [l_2(t), r_2(t)]=\emptyset.$ This scenario precisely corresponds to the final case discussed in Lemma \ref{lem: h function}. One more observation is that, at the time $t=\frac{1}{4},$ the mass $4t \delta$ becomes $\delta.$ Since there is no contribution of mass from the rarefaction regions, it then propagates along the straight line $X_1=\frac{1}{2}$ until the time $t=\frac{1}{2},$ at which point it evolves into $2t \delta.$ Moreover, for $t \in [\frac{1}{4}, \frac{1}{2}], $ $u$ can be calculated as identically zero along the line $X_1=\frac{1}{2}$ from the formula given in Definition \ref{d2}.
\end{rem}

\subsection*{Acknowledgments} The author is supported by postdoctoral research grants from the Alexander von Humboldt Foundation, Germany. He also thanks Lukas Neumann for helpful discussions and comments.

\subsection*{Conflict of interest} On behalf of all authors, the corresponding author states that there is no conflict of interest.

\subsection*{Data availability} This manuscript has no associated data.

\bibliographystyle{siam}
\bibliography{Refs2point}	

\begin{thebibliography}{10}

\bibitem{bar}
{\sc C.~Bardos, A.~Y. {L}e Roux, and J.-C. N\'{e}d\'{e}lec}, {\em First order
  quasilinear equations with boundary conditions}, Comm. Partial Differential
  Equations, 4 (1979), pp.~1017--1034.

\bibitem{Bouchut94}
{\sc F.~Bouchut}, {\em On zero pressure gas dynamics}, in Advances in kinetic
  theory and computing, B.~Perthame, ed., vol.~22 of Series on Advances in
  Mathematics for Applied Sciences, World Scientific Publishing Co., Inc.,
  River Edge, NJ, 1994, pp.~171--190.
\newblock Selected papers.

\bibitem{BouchutJames98}
{\sc F.~Bouchut and F.~James}, {\em One-dimensional transport equations with
  discontinuous coefficients}, Nonlinear Anal., 32 (1998), pp.~891--933.

\bibitem{BouchutJames99}
{\sc F.~Bouchut and F.~James}, {\em Duality solutions for pressureless gases,
  monotone scalar conservation laws, and uniqueness}, Comm. Partial
  Differential Equations, 24 (1999), pp.~2173--2189.

\bibitem{bouchut-jin}
{\sc F.~c. Bouchut, S.~Jin, and X.~Li}, {\em Numerical approximations of
  pressureless and isothermal gas dynamics}, SIAM J. Numer. Anal., 41 (2003),
  pp.~135--158.

\bibitem{Boudin00}
{\sc L.~Boudin}, {\em A solution with bounded expansion rate to the model of
  viscous pressureless gases}, SIAM J. Math. Anal., 32 (2000), pp.~172--193.

\bibitem{mathiaud}
{\sc L.~Boudin and J.~Mathiaud}, {\em A numerical scheme for the
  one-dimensional pressureless gases system}, Numer. Methods Partial
  Differential Equations, 28 (2012), pp.~1729--1746.

\bibitem{BrenierGrenier98}
{\sc Y.~Brenier and E.~Grenier}, {\em Sticky particles and scalar conservation
  laws}, SIAM J. Numer. Anal., 35 (1998), pp.~2317--2328.

\bibitem{BN2014}
{\sc A.~Bressan and T.~Nguyen}, {\em Non-existence and non-uniqueness for
  multidimensional sticky particle systems}, Kinet. Relat. Models, 7 (2014),
  pp.~205--218.

\bibitem{cavalletti}
{\sc F.~Cavalletti, M.~Sedjro, and M.~Westdickenberg}, {\em A simple proof of
  global existence for the 1{D} pressureless gas dynamics equations}, SIAM J.
  Math. Anal., 47 (2015), pp.~66--79.

\bibitem{dafermos1}
{\sc C.~M. Dafermos}, {\em Hyperbolic conservation laws in continuum physics},
  vol.~325 of Grundlehren der mathematischen Wissenschaften [Fundamental
  Principles of Mathematical Sciences], Springer-Verlag, Berlin, 2000.

\bibitem{d3}
{\sc X.~Ding and Z.~Wang}, {\em Existence and uniqueness of discontinuous
  solutions defined by {L}ebesgue-{S}tieltjes integral}, Sci. China Ser. A, 39
  (1996), pp.~807--819.

\bibitem{ERykovSinai96}
{\sc W.~E, Y.~G. Rykov, and Y.~G. Sinai}, {\em Generalized variational
  principles, global weak solutions and behavior with random initial data for
  systems of conservation laws arising in adhesion particle dynamics}, Comm.
  Math. Phys., 177 (1996), pp.~349--380.

\bibitem{F2010}
{\sc H.~Frankowska}, {\em On {L}e{F}loch's solutions to the initial-boundary
  value problem for scalar conservation laws}, J. Hyperbolic Differ. Equ., 7
  (2010), pp.~503--543.

\bibitem{Wang01}
{\sc F.~Huang and Z.~Wang}, {\em Well posedness for pressureless flow}, Comm.
  Math. Phys., 222 (2001), pp.~117--146.

\bibitem{ma2}
{\sc K.~T. Joseph and M.~R. Sahoo}, {\em Boundary {R}iemann problem for the
  one-dimensional adhesion model}, Can. Appl. Math. Q., 19 (2011), pp.~19--41.

\bibitem{JosephSahoo12}
{\sc K.~T. Joseph and M.~R. Sahoo}, {\em A system of generalized {B}urgers'
  equations with boundary conditions}, Appl. Math. E-Notes, 12 (2012),
  pp.~102--109.

\bibitem{JG92}
{\sc K.~T. Joseph and G.~D. Veerappa~Gowda}, {\em Solution of convex
  conservation laws in a strip}, Proc. Indian Acad. Sci. Math. Sci., 102
  (1992), pp.~29--47.

\bibitem{L2018}
{\sc D.~Lannes}, {\em Initial boundary value problems for hyperbolic systems,
  and dispersive perturbations}, Lecture notes volume of the Winter School on
  Fluid Dynamics, Dispersive Equations and Quantum Fluids,  (2018).

\bibitem{Le1}
{\sc P.~LeFloch}, {\em Explicit formula for scalar nonlinear conservation laws
  with boundary condition}, Math. Methods Appl. Sci., 10 (1988), pp.~265--287.

\bibitem{NS2009}
{\sc L.~Natile and G.~Savar\'e}, {\em A {W}asserstein approach to the
  one-dimensional sticky particle system}, SIAM J. Math. Anal., 41 (2009),
  pp.~1340--1365.

\bibitem{NNOS17}
{\sc M.~Nedeljkov, L.~Neumann, M.~Oberguggenberger, and M.~R. Sahoo}, {\em
  Radially symmetric shadow wave solutions to the system of pressureless gas
  dynamics in arbitrary dimensions}, Nonlinear Anal., 163 (2017), pp.~104--126.

\bibitem{NOSS22}
{\sc L.~Neumann, M.~Oberguggenberger, M.~R. Sahoo, and A.~Sen}, {\em
  Initial-boundary value problem for 1{D} pressureless gas dynamics}, J.
  Differential Equations, 316 (2022), pp.~687--725.

\bibitem{NT08}
{\sc T.~Nguyen and A.~Tudorascu}, {\em Pressureless {E}uler/{E}uler-{P}oisson
  systems via adhesion dynamics and scalar conservation laws}, SIAM J. Math.
  Anal., 40 (2008), pp.~754--775.

\bibitem{nguyen}
\leavevmode\vrule height 2pt depth -1.6pt width 23pt, {\em One-dimensional
  pressureless gas systems with/without viscosity}, Comm. Partial Differential
  Equations, 40 (2015), pp.~1619--1665.

\bibitem{Sen2024}
{\sc A.~Sen and A.~Sen}, {\em Existence of global entropy solution for eulerian
  droplet models and two-phase flow model with non-constant air velocity}, J.
  Dynam. Differential Equations,  (2024).

\bibitem{SB2006}
{\sc I.~S. Strub and A.~M. Bayen}, {\em Weak formulation of boundary conditions
  for scalar conservation laws: an application to highway traffic modelling},
  Internat. J. Robust Nonlinear Control, 16 (2006), pp.~733--748.

\bibitem{WangDing97}
{\sc Z.~Wang and X.~Ding}, {\em Uniqueness of generalized solution for the
  {C}auchy problem of transportation equations}, Acta Math. Sci. (English Ed.),
  17 (1997), pp.~341--352.

\bibitem{WHD97}
{\sc Z.~Wang, F.~Huang, and X.~Ding}, {\em On the {C}auchy problem of
  transportation equations}, Acta Math. Appl. Sinica (English Ser.), 13 (1997),
  pp.~113--122.

\end{thebibliography}

\Addresses
\end{document}